\providecommand{\U}[1]{\protect\rule{.1in}{.1in}}
\theoremstyle{plain}
\newtheorem{thm}{Theorem}
\newtheorem{lemma}{Lemma}
\newtheorem{corollary}{Corollary}
\begin{document}

\title{Two-parameter Sample Path Large Deviations for Infinite Server Queues}
\author{Blanchet, J., Chen, X., and Lam, H.}
\maketitle

\begin{abstract}
Let $Q_{\lambda}\left(  t,y\right)  $ be the number of people present at time
$t$ with $y$ units of remaining service time in an infinite server system with
arrival rate equal to $\lambda>0$. In the presence of a non-lattice renewal
arrival process and assuming that the service times have a continuous
distribution, we obtain a large deviations principle for $Q_{\lambda}\left(
\cdot\right)  /\lambda$ under the topology of uniform convergence on
$[0,T]\times\lbrack0,\infty)$. We illustrate our results by obtaining the most
likely path, represented as a surface, to ruin in life insurance portfolios,
and also we obtain the most likely surfaces to overflow in the setting of loss queues.

\end{abstract}

\section{Introduction}

The asymptotic analysis of queueing systems with many servers in heavy-traffic
has received substantial attention, especially in recent years. Among the
earliest references that come to mind in connection to this topic is the work
of \cite{Iglehart65} on heavy-traffic limits for the infinite-server queue.
Another highly influential paper in the area is \cite{pHAL81a} in the context
of many server Markovian queues, which introduced a scaling that is now known
as the \textquotedblleft Quality and Efficiency Driven\textquotedblright%
\ regime. The ideas in these papers have fueled more recent results in the
asymptotic analysis of many server systems such as: \cite{PuhRei00},
\cite{JelManMom04}, \cite{Reed09}, \cite{KaspiRamanan10},
\cite{KaspiRamanan11}, in the setting of many server queues, and
\cite{GlynnWhitt91}, \cite{DecMoy08}, \cite{PangWhitt10},
\cite{ReedTalreja2012}, in the setting of the infinite server queue. The
asymptotic analysis of queueing systems with many servers has been motivated
by applications in service engineering, in particular in the context of call
centers and health-care operations. Another set of application areas that is
also very relevant, but that is infrequently mentioned in the analysis of many
server systems is that of insurance mathematics. It is clear, for instance,
that a portfolio of insurance policies can be directly modeled as an infinite
server system; casting insurance portfolios in this framework is particularly
appealing in the setting of life insurance as we shall illustrate in Section
\ref{SectionExamples}.

So far most of the asymptotic analysis of many server systems has concentrated
mainly on fluid and heavy-traffic approximations. Meanwhile, the literature on
large deviations analysis for many server queues is not as extensive as that
of fluid or diffusion approximations; despite the fact that it is clearly of
interest to understand the large deviations behavior of these types of
systems. For instance, consider the consequences of dropping calls in an
emergency call center or being unable to satisfy the demand for critically ill
patients in the context of health-care applications. In the insurance setting,
for instance, it is of interest to estimate ruin probabilities and, perhaps
even more importantly, understanding the most likely path (or set of paths) to
ruin. Risk theory typically concentrates on ruin probabilities for aggregated
models, such as the classical ruin model (see \cite{AsmAlb10}); the results in
this paper, as we shall illustrate, provide a systematic way for assessing
ruin probabilities for a class of bottom-up models.

Our main contribution in this paper is to provide the first sample-path large
deviations analysis of the state descriptor of the infinite server queueing
model in heavy-traffic (i.e. as the arrival rate increases to infinity without
introducing any scaling on the service times). The statement of our main
result, which is given in Theorem \ref{main thm unbdd}, features a convenient
representation of a good large deviations rate function, under a strong
topology, that later we use in some applied settings of interest. For
instance, we compute the most likely path to overflow in a loss system, and
also the most likely path to ruin for a life insurance portfolio that embeds
an infinite server queue with a particular service cost structure. It is
important to emphasize that our results take advantage of a convenient
representation of the system's description that facilitates the representation
of the rate function; detailed discussion on this system's representation is
given in Section \ref{SubsectionAssumptions}. Previous large deviations
analysis of the infinite server queue has concentrated on queue-length
characteristics only; see, for instance, \cite{Glynn95a} who develops large
deviations for marginal quantities in the case of renewal arrivals, and
\cite{Zajic1998} who develops sample path large deviations for the queue
length process of infinite server queues in tandem in the case of Poisson arrivals.

Our large deviations analysis complement results on fluid analysis and
diffusion approximations recently obtained for infinite server systems. For
example, \cite{PangWhitt10} have shown that the state descriptor of the
infinite server queue, suitably parameterized in terms of a two-parameter
stochastic process, converges after centering and re-scaling to a Gaussian
process; see also \cite{ReedTalreja2012} who interpret the state descriptor of
the infinite server queue as a measure valued process acting on the space of
tempered distributions. These recent results, in turn, extend prior work by
\cite{GlynnWhitt91} in the context of discrete and bounded service time
distributions, and \cite{DecMoy08} for the case of Poisson arrivals.

The analysis of the infinite server queue is important as it serves as a
building block for other models of interest. For instance, in the setting of
loss models one can clearly couple the loss systems with associated infinite
server systems, and in the setting of many server queues \cite{Reed09} shows
how one can precisely understand queues with multiple servers as a
perturbation of infinite server queues. Furthermore, the infinite server model
is a classical model in queueing theory that serves as direct model in
important applications. Of particular interest to us, as mentioned earlier,
are the applications to insurance mathematics.

The rest of the paper is organized as follows. In Section
\ref{SectionNotation} we introduce our problem setting and provide a statement
of our main result. This is a fundamental section and it is divided into three
parts. We first shall introduce our assumptions and define our notation. Then
we provide the precise mathematical statement of our result, and, finally, we
will provide a heuristic argument that allows us to gain some intuition behind
our main result. The next two sections then provide the proof of our main
result. We first show our result for bounded service times in Section
\ref{SectionBdd}. Then, in Section \ref{SectionUnBdd}, we apply a truncation
argument to extend our result to unbounded service times. Finally, in Section
\ref{SectionExamples} we apply our result to computing the most likely paths
to rare events in the setting of loss queueing systems and also in the setting
of ruin probabilities for large life insurance portfolios.

\section{Assumptions, Notation, and Main Result\label{SectionNotation}}

The purpose of this section is threefold. First we shall clearly state our
assumptions and introduce necessary notation for our development. Second, we
shall explain the main large deviations result and provide a heuristic
derivation of the rate function that we obtain. Finally, we shall provide a
road map for the strategy behind the proof which will be presented in
subsequent sections.

\subsection{Assumptions and Notation\label{SubsectionAssumptions}}

We shall describe an underlying system corresponding to an arrival rate
$\lambda$. We call the system with $\lambda=1$, i.e. one customer per unit
time, our \textquotedblleft base system\textquotedblright; eventually we shall
send $\lambda$ to infinity in our asymptotic analysis. We collect our
assumptions as follows.\newline

\noindent\textbf{Assumptions and notation concerning the arrival process.} For
the base system, we assume the interarrival times are non-lattice, i.i.d.,
positive random variables $(U_{n}:n\geq1)$ with finite exponential moments in
a neighborhood of the origin; in precise words, $\kappa(\theta):=\log
Ee^{\theta U_{n}}<\infty$ for some $\theta>0$. In our $\lambda$-scaled system,
the arrivals come $\lambda$ times faster (i.e. the $n$-th interarrival times
becomes $U_{n}/\lambda)$. The associated logarithmic moment generating of the
$\lambda$-scaled service times is then $\kappa_{\lambda}(\theta):=\log
Ee^{\theta U_{n}/\lambda}=\kappa(\theta/\lambda)<\infty$ for some $\theta>0$.

The time at which the $n$-th arrival occurs in the base system is $A_{n}%
=U_{1}+...+U_{n}$ for $n\geq1$. We simply define $A_{0}:=0$ and then let
$N\left(  t\right)  :=\max\{n\geq0:A_{n}\leq t\}$ be the number of arrivals
that have occurred up to time $t$ in the base system. It is important to keep
in mind that $N\left(  \cdot\right)  $ increases by one unit at discontinuity
points since we are assuming that the $U_{n}$'s are positive.

Eventually, we shall increase the arrival rate, so it is sensible to define
$N_{\lambda}\left(  t\right)  :=N\left(  \lambda t\right)  $.

Define the so-called infinitesimal logarithmic moment generating function for
the arrival process via $\psi_{N}(\theta)=-\kappa^{-1}(-\theta)$ (see
\cite{Glynn95a}). This definition is motivated by the fact that
\[
\lim_{t\rightarrow\infty}\lambda^{-1}\log E\exp\left(  \theta\lbrack
N_{\lambda}\left(  t+\delta\right)  -N_{\lambda}\left(  t\right)  ]\right)
=\psi_{N}\left(  \theta\right)  \delta
\]
for any $\delta>0$. Since the $U_{n}$'s are positive with probability one we
have that $\psi_{N}\left(  \cdot\right)  $ is continuous and strictly convex
on the positive line. We also assume that $\psi_{N}\left(  \cdot\right)  $ is
continuously differentiable throughout $\mathbb{R}$. This assumption is
satisfied for most arrival processes, certainly for interarrival times that
are strictly positive and such that $\sup\{\kappa\left(  \theta\right)
:\kappa\left(  \theta\right)  <\infty\}=\infty$.\bigskip

\noindent\textbf{Assumptions and notation concerning the service times.} We
assume that the $n$-th customer that arrives to the base system (i.e. at time
$A_{n}$) brings up a service requirement of size $V_{n}$. The sequence
$(V_{n}:n\geq1)$ is assumed to be i.i.d. We write $F(x)=P(V_{n}\leq x)$ to
denote the associated distribution function evaluated at $x$, and set $\bar
{F}(x):=1-F(x)$ to be the tail distribution. Moreover, we assume that
$F\left(  \cdot\right)  $ is continuous.\newline

\noindent\textbf{Two-parameter representation of system status.} Let $\bar
{Q}_{\lambda}\left(  t,y\right)  $ denote the number of customers who arrived
before or at time $t$ and leave after time $y$ in the $\lambda$-scaled system.
In other words,%
\[
\bar{Q}_{\lambda}\left(  t,y\right)  =\bar{Q}_{\lambda}\left(  0,y-t\right)
+\sum_{n=1}^{N_{\lambda}\left(  t\right)  }I\left(  V_{n}+A_{n}/\lambda
>y\right)  .
\]
We shall assume that the system is initially empty at the beginning. This is
done for simplicity. Since we have infinitely many servers, we can incorporate
the initial configuration by keeping track of its evolution independently of
what occurs subsequently. Given our assumption of an initial empty system we
then have that $\bar{Q}_{\lambda}\left(  0,u\right)  =0$ for all $u\geq0$.
Note that for $t\in\lbrack0,T]$ and $y\geq0$,%
\begin{equation}
\bar{Q}_{\lambda}\left(  t,y\right)  =\bar{Q}_{\lambda}\left(  t\wedge
y,y\right)  +N_{\lambda}\left(  t\right)  -N_{\lambda}\left(  t\wedge
y\right)  . \label{EqQbarExt}%
\end{equation}

It is worth comparing the current system representation with the more common
one involving the quantity $Q_{\lambda}\left(  t,u\right)  $ defined as the
number of customers in the system currently at time $t$ who have residual
service time larger than $u\geq0$; more precisely, $Q_{\lambda}\left(
t,u\right)  =\bar{Q}_{\lambda}\left(  t,u+t\right)  $. These two system
representations are equivalent in the sense that $\left(  Q_{\lambda}\left(
t,u\right)  :t\in\lbrack0,T],u\geq0\right)  $ encodes the evolution of the
infinite server systems and thus, such evolution can be used in principle to
retrieve $(\bar{Q}_{\lambda}\left(  t,u\right)  :t\in\lbrack0,T],u\geq0)$. We
have chosen the representation based on $\bar{Q}_{\lambda}$ to facilitate the
representation of the rate function; a more detailed discussion is given
towards the end of Section \ref{SectionGuessing}. In addition, the
representation based on $\bar{Q}_{\lambda}$ allows to obtain a rich large
deviations principle to which one can apply the contraction principle directly
to several continuous functions of interest. For instance, it follows
immediately that the arrival process $N_{\lambda}\left(  t\right)  =\bar
{Q}_{\lambda}\left(  t,0\right)  $, and the departure process, $D_{\lambda
}\left(  t\right)  :=N_{\lambda}\left(  t\right)  -\bar{Q}_{\lambda}\left(
t,t\right)  $ are continuous functions under the topology that we consider
(and that we shall discuss in the next paragraphs). More applications of the
contraction principle will be discussed in Section \ref{SectionExamples}.
\bigskip

\noindent\textbf{Discussion about the topological space. }Let $\mathcal{D}%
=\{(t,y):0\leq t\leq T,y\geq0\}$ and let us write $||\cdot||_{\mathcal{C}}$ to
denote the supremum norm over any set $\mathcal{C}$. The space of functions
that we consider for our large deviations principle shall be denoted by
$L_{+,\infty}(\mathcal{D})$ and it corresponds to bounded functions with
domain in $\mathcal{D}$, such that $x\left(  0,u\right)  =0$ for $u\geq0$,
$x\left(  t,\cdot\right)  $ is non increasing, and $x\left(  t,\cdot\right)  $
vanishes at infinity. We will develop the large deviations principle for the
family of stochastic processes $(\bar{Q}_{\lambda}/\lambda:\lambda>0)$ on the
space $L_{+,\infty}(\mathcal{D})$ endowed with the topology generated by the
supremum norm. Following \cite{DEMZEI98} p. 4, the probability measures in
path space in our development are assumed to have been completed.

Our large deviations principle for $\bar{Q}_{\lambda}/\lambda$ immediately
implies in particular a large deviations principle in the Skorokhod topology
in the space $D_{D_{\mathbb{R}}[0,\infty)}[0,T]$ which is the space of
right-continuous-with-left-limits (RCLL) functions $x$, with domain on
$[0,T]$, that take values on the space of RCLL functions taking values on
$\mathbb{R}$. That is, on each time point $t$ in $x=\left(  x\left(  t\right)
:t\in\lbrack0,T]\right)  \in D_{D_{\mathbb{R}}[0,\infty)}[0,T]$ is a function
$x\left(  t\right)  \in D_{\mathbb{R}}[0,\infty)$. This is precisely the
topology considered in \cite{PangWhitt10}, who also provide a discussion on
the benefits of using this topology relative to other natural (but weaker)
alternative options (see Section 2.3 in \cite{PangWhitt10}).

An alternative approach that one might consider given the available results on
functional weak convergence analysis of the infinite server queue, such as
\cite{ReedTalreja2012}, is to interpret the space descriptor of the infinite
server queue as acting on the space of tempered distributions. We believe,
however, that this approach, although elegant, has important limitations in
terms of assumptions and the class of functions to which the contraction
principle can be directly applied to obtain other large deviations principles
of interest.

%Our results, however, allow to handle a larger class of interesting functions.
%For example, if $Y^{\ast}\left(  t\right)  $ is the largest remaining service
%time in the system at time $t$, we have that $Y^{\ast}\left(  t\right)
%=\inf\{u>0:Q_{\lambda}\left(  t,u\right)  /\lambda>0\}=\inf\{u>0:Q_{\lambda
%}^{\ast}\left(  t,u\right)  /\lambda>0\}$. The contraction principle in our
%setting applies directly to $Q_{\lambda}^{\ast}/\lambda$ to obtain a large
%deviations principle for $Y^{\ast}\left(  \cdot\right)  $ in the uniform
%topology. This particular function cannot be directly handled in the setting
%of the Skorokhod topology introduced in Pang and Whitt (2010).

%\newline

%\newline
%\bigskip

\subsection{Statement of Our Main result.}

We are now ready to state our main result. Let $\bar{q}:=\left(  \bar
{q}\left(  t,y\right)  :\left(  t,y\right)  \in\mathcal{D}\right)  \in
L_{+,\infty}(\mathcal{D})$. We say that $\bar{q}\in AC_{+}\left(
\mathcal{D}\right)  $ if the following conditions hold:

i) $\bar{q}$ is absolutely continuous on $\mathcal{D}$, and%
\[
\int_{0}^{\infty}\int_{0}^{\infty}\left\vert \frac{\partial^{2}}{\partial
t\ \partial y}\bar{q}(t,y)\right\vert dydt<\infty,
\]

ii) $\partial^{2}\bar{q}(t,y)/\partial t\ \partial y=0$ almost everywhere for
$\left(  t,y\right)  \in\{(t,y):0\leq y\leq t\leq T\}, $

iii) $\bar{q}\left(  0,y\right)  =0$ for $y\geq0$.

If $\bar{q}\in AC_{+}\left(  \mathcal{D}\right)  $, then we let $I\left(
\bar{q}\right)  $ be defined via%

\begin{equation}
\sup_{\theta(\cdot,\cdot)\in C_{b}[0,T]\times\lbrack0,\infty)}\int_{0}%
^{T}\left[  \int_{t}^{\infty}\theta(t,y-t)\left(  -\frac{\partial^{2}%
}{\partial t\ \partial y}\bar{q}(t,y)\right)  dy-\psi_{N}\left(  \log\left(
\int_{0}^{\infty}e^{\theta(t,y)}dF(y)\right)  \right)  \right]  dt \label{R}%
\end{equation}
where $C_{b}[0,T]\times\lbrack0,\infty)$ is the set of all bounded continuous
functions on $[0,T]\times\lbrack0,\infty)$. On the other hand, if $\bar{q}\in
L_{+,\infty}(\mathcal{D})$ fails to satisfy any of the conditions i)\ to iii),
simply let $I\left(  \bar{q}\right)  =\infty$.

We now can state our main result.

\begin{thm}
\label{main thm unbdd}Under the set of assumptions discussed in Section
\ref{SubsectionAssumptions}, $(\bar{Q}_{\lambda}/\lambda:\lambda>0)$ satisfies
a large deviations principle with good rate function $I\left(  \cdot\right)  $
on the space $(L_{+,\infty}(\mathcal{D})$, $\left\vert \left\vert
\cdot\right\vert \right\vert _{\mathcal{D}})$. In precise terms, for each open
set $O$ we have that%
\[
\underline{\lim}_{\lambda\rightarrow\infty}\log\frac{1}{\lambda}P(\bar
{Q}_{\lambda}/\lambda\in O)\geq-\inf_{q\in O}I\left(  q\right)  ,
\]
and for each closed set $C$%
\[
\overline{\lim}_{\lambda\rightarrow\infty}\log\frac{1}{\lambda}P(\bar
{Q}_{\lambda}/\lambda\in C)\leq-\inf_{q\in C}I\left(  q\right)  .
\]

\end{thm}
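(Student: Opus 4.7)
The plan is to follow the two-stage program the authors outline: first establish the LDP under the additional assumption that service times are uniformly bounded by some constant $M$, and then remove this assumption via an exponential approximation argument. The central conceptual observation is that $\bar{Q}_\lambda(t,y)/\lambda$ is a distribution-function-type functional of the empirical point measure $\mu_\lambda := \lambda^{-1}\sum_{n=1}^{N_\lambda(T)}\delta_{(A_n/\lambda,V_n)}$, since $\bar{Q}_\lambda(t,y)/\lambda = \mu_\lambda(\{(s,v):s\le t,\,s+v>y\})$. Under the change of variables $y=t+v$, the quantity $-\partial^2\bar{q}(t,y)/\partial t\,\partial y$ plays the role of the joint density of (arrival time, service time); this is precisely why $\theta(t,y-t)$ appears in the representation (R).

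\textbf{Bounded case via G\"artner--Ellis.} Assuming $V_n\le M$ almost surely, I would compute the limiting cumulant functional
\[
\Lambda(\theta)=\lim_{\lambda\to\infty}\frac{1}{\lambda}\log E\exp\!\Bigl(\sum_{n=1}^{N_\lambda(T)}\theta(A_n/\lambda,V_n)\Bigr)
\]
for $\theta\in C_b([0,T]\times[0,M])$. Independence of services and arrivals lets me integrate out the $V_n$ first, producing the bounded deterministic reward $h_\theta(t):=\log\int_0^\infty e^{\theta(t,v)}\,dF(v)$ attached to each arrival. Glynn's infinitesimal LDP for the renewal counting process $N_\lambda$, available because $\kappa$ is finite in a neighborhood of $0$, then gives
\[
\Lambda(\theta)=\int_0^T \psi_N\!\bigl(h_\theta(t)\bigr)\,dt.
\]
The Fenchel--Legendre transform of $\Lambda$, translated from $(t,v)$ back to $(t,y)$ via $y=t+v$, reproduces (R). The effective domain of the dual should turn out to be exactly $AC_+(\mathcal{D})$: conditions (ii) and (iii) are forced by testing against $\theta$ supported on $\{y<t\}$ or at $t=0$, while condition (i) comes out of the standard absolute-continuity dichotomy in the Fenchel conjugate of an integral functional.

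\textbf{Topology, tightness and main obstacle.} Upgrading this G\"artner--Ellis computation to a full LDP in the uniform norm on $L_{+,\infty}(\mathcal{D})$ requires exponential tightness in that strong topology, which I expect to be the hardest step. I would control two separate moduli of continuity: (a) in $t$, via the Cram\'er LDP for $N_\lambda$; (b) in $y$, by partitioning $[0,M]$ into blocks of width $\delta$ and applying the Bernoulli LDP to the number of service times falling in each block, which is feasible because $F$ is continuous and hence the per-block mass can be made uniformly small. A diagonal argument on deterministic grids then yields exponential equicontinuity and hence exponential tightness in $\|\cdot\|_{\mathcal{D}}$; goodness of $I$ follows automatically from its Legendre representation.

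\textbf{Unbounded extension.} For general service times I would split $\bar{Q}_\lambda=\bar{Q}_\lambda^{\le M}+\bar{Q}_\lambda^{>M}$ by indicators $I(V_n\le M)$ and $I(V_n>M)$, apply the bounded-case LDP to $\bar{Q}_\lambda^{\le M}$, and bound
\[
\|\bar{Q}_\lambda^{>M}/\lambda\|_{\mathcal{D}}\le \frac{1}{\lambda}\sum_{n=1}^{N_\lambda(T)} I(V_n>M).
\]
Combining the Cram\'er LDP for $N_\lambda(T)/\lambda$ with the Bernoulli LDP for the count on the right-hand side, and using that $\bar{F}(M)\to 0$ makes the Bernoulli rate at any fixed threshold diverge, this quantity is exponentially negligible as $M\to\infty$; notably no exponential-moment hypothesis on $F$ is required. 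A standard exponential-approximation principle then transfers the LDP, provided the truncated rate functions $I^M$ converge to $I$, which follows by monotone convergence inside the supremum in (R) as $M\to\infty$.
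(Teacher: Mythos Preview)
Your two-stage plan (bounded case, then truncation) matches the paper exactly, and your computation of $\Lambda(\theta)=\int_0^T\psi_N(\log\int e^{\theta(t,v)}dF(v))\,dt$ together with the change of variables $y=t+v$ is precisely the heuristic the authors give in Section~\ref{SectionGuessing}. The unbounded extension you sketch --- bounding $\|\bar Q_\lambda^{>M}/\lambda\|_{\mathcal D}$ by the thinned count $\lambda^{-1}\sum_{n\le N_\lambda(T)}I(V_n>M)$ and sending $M\to\infty$ --- is also exactly the paper's Lemma~\ref{arrivalsK}.

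Two substantive differences are worth flagging. First, in the bounded case the paper does \emph{not} run an abstract infinite-dimensional G\"artner--Ellis against test functions $\theta\in C_b$; instead it applies finite-dimensional G\"artner--Ellis to the rectangular increments $\Delta_{ij}(\lambda)$ of $\bar Q_\lambda$ and then invokes the Dawson--G\"artner projective limit theorem to obtain the LDP in the pointwise topology. The authors explicitly discuss your empirical-measure route at the end of Section~\ref{SectionGuessing} and explain why they avoid it: passing from the LDP for $\mu_\lambda$ to one for $\bar Q_\lambda$ in the uniform norm would require contraction through indicator sets, which are not weakly continuous, so one is pushed toward stronger measure topologies or the extended contraction principle --- doable, but no shorter than the direct argument.

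Second, and this is closer to a genuine gap in your write-up: you invoke ``exponential equicontinuity'' and Arzel\`a--Ascoli for $\bar Q_\lambda/\lambda$, but $\bar Q_\lambda$ has jumps in $t$, so it does not live in a space where equicontinuity yields compactness. The paper handles this by constructing a polygonal interpolation $\tilde Q_\lambda$ with $\|\tilde Q_\lambda-\bar Q_\lambda\|_{\mathcal D}\le 4$ (Section~\ref{SubSectAuxiliary}), proving tightness and the LDP for $\tilde Q_\lambda/\lambda$ in $C_+(\mathcal D_M)$, and then transferring via exponential equivalence. Your modulus-of-continuity estimates in (a) and (b) are essentially the content of the paper's Lemma~\ref{tightness}, but they are applied to $\tilde Q_\lambda$, not $\bar Q_\lambda$. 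Finally, in the truncation step, ``monotone convergence inside the supremum'' handles only the case $I(\bar q)<\infty$; the paper needs a separate projection argument (Lemma~\ref{Iqbar} parts ii) and iii)) to show that when $I(\bar q)=\infty$ one can find a finite-dimensional projection witnessing $I_K(p_\kappa(\hat q))>M$ uniformly over a neighborhood of $\bar q$, which is what makes the identification $I^\ast=I$ go through.
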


\bigskip

As an immediate corollary we obtain, as mentioned earlier, a large deviations
principle for $(Q_{\lambda}/\lambda:\lambda>0)$ under the Skorokhod topology
in the space $D_{D_{\mathbb{R}}[0,\infty)}[0,T]$, discussed in the previous
section and introduced in \cite{PangWhitt10}.

%\begin{corollary}
%\label{CorMain}The process $(\bar{Q}_{\lambda}/\lambda:\lambda>0)$ satisfies a
%large deviations principle in the Skorokhod topology on the space
%$D_{D_{\mathbb{R}}[0,\infty)}[0,T]$ with good rate function $I\left(
%\cdot\right)  $.
%\end{corollary}

We shall explain the strategy behind the proof of Theorem \ref{main thm unbdd}%
. First, we shall introduce an auxiliary continuous process, $\tilde
{Q}_{\lambda}/\lambda$, which shall be defined later in Section
\ref{SubSectAuxiliary}. We will show that $\tilde{Q}_{\lambda}/\lambda$ is
exponentially equivalent to $\bar{Q}_{\lambda}/\lambda$. Second, in addition
to the assumptions imposed in Section \ref{SubsectionAssumptions} we will
assume that there exists a deterministic constant $M\in\left(  0,\infty
\right)  $ such that $P\left(  V_{n}\in\lbrack0,M]\right)  =1$. In the third
and last part of the argument we will relax this truncation assumption.

In turn, the first part of the argument (i.e. assuming truncation) is divided
into several steps.

The first step consists in developing the large deviations principle for
$\tilde{Q}_{\lambda}/\lambda$ with rate $I\left(  \cdot\right)  $ under the
topology of pointwise convergence using the Dawson-Gartner projective limit
theorem. The second step involves showing that $\tilde{Q}_{\lambda}/\lambda$
is exponentially tight as $\lambda\rightarrow\infty$ under the uniform
topology on the compact set $[0,T]\times\lbrack0,M]$. The third and last step
involves lifting the large deviations principle to the uniform topology.

During the second part we introduce an approximation scheme that proceeds by
ignoring the customers that arrive to the system with a service time larger
than $K$. Using a coupling argument, the process that is obtained using this
scheme is shown to be a good approximation to the original system for the
purpose of computing large deviations probabilities.

However, before we do this let us provide a heuristic argument in order to
guess the form of the rate function. Later we will explain what are the
technical difficulties that need to be addressed.

\subsubsection{Guessing the Rate Function: A Heuristic
Approach\label{SectionGuessing}}

One can take advantage of the point process representation of the input
process (i.e. the arrivals and the service times represented as a marked point
process). Let us start with the case of Poisson arrivals. We shall briefly
explain how to adapt the development that follows to the more general case of
renewal arrivals.

Consider the scaled system with arrival rate $\lambda$ and suppose that
$F\left(  \cdot\right)  $ has a density $f\left(  \cdot\right)  $. The amount
of customers that arrive during the time interval $[t,t+dt]$ that bring a
service requirement of size $[r,r+dr]$ is denoted by the quantity
$\mathcal{M}_{\lambda}\left(  t+dt,r+dr\right)  $, which is governed by a
Poisson distribution with rate $\lambda f\left(  r\right)  dtdr$. It follows
then by elementary considerations involving the Poisson distribution that
$\mathcal{M}_{\lambda}\left(  t+dt,r+dr\right)  /\lambda$ satisfies a large
deviations principle in the real line. In particular, we formally obtain that%
\[
P(\mathcal{M}_{\lambda}\left(  t+dt,r+dr\right)  /\lambda\approx\mu\left(
t,r\right)  dtdr)=\exp\left(  -\lambda J\left(  \mu\left(  t,r\right)
\right)  dtdr\right)  ,
\]
with
\[
J\left(  \mu\left(  t,r\right)  \right)  =\sup_{\eta\left(  t,r\right)  }%
[\eta\left(  t,r\right)  \mu\left(  t,r\right)  -\psi_{N}\left(  \eta\left(
t,r\right)  \right)  f\left(  r\right)  ],
\]
and $\psi_{N}\left(  \eta\right)  =\exp\left(  \eta\right)  -1$. The supremum
above is obtained formally with $\eta_{\ast}\left(  t,r\right)  :=\log
(\mu\left(  t,r\right)  /f\left(  r\right)  )$.

So, by pasting independent regions of the form $[t,t+dt]\times\lbrack r,r+dr]$
together one expects that the Poisson random measure $\mathcal{M}_{\lambda
}\left(  \cdot\right)  /\lambda$ would satisfy a large deviations principle
under a suitable topology, so that%
\[
P\left(  \mathcal{M}_{\lambda}\left(  A\times B\right)  /\lambda\approx
\int_{A\times B}\mu\left(  t,r\right)  dtdr\text{, for a large class }A\times
B\right)  \approx\exp\left(  -\lambda\mathbf{J}\left(  \mu\right)  \right)
\]
with%
\begin{align}
\mathbf{J}\left(  \mu\right)   &  =\int_{[0,T]\times\lbrack0,\infty)}%
[\eta_{\ast}\left(  t,r\right)  \mu\left(  t,r\right)  -\psi_{N}\left(
\eta_{\ast}\left(  t,r\right)  \right)  f\left(  r\right)  ]dtdr\nonumber\\
&  =\sup_{\eta\left(  \cdot,\cdot\right)  \in C_{b}[0,T]\times\lbrack
0,\infty)}\int_{[0,T]\times\lbrack0,\infty)}[\eta\left(  t,r\right)
\mu\left(  t,r\right)  -\psi_{N}\left(  \eta\left(  t,r\right)  \right)
f\left(  r\right)  ]dtdr. \label{LDP1}%
\end{align}

Now, observe that for all $y\geq t$%
\begin{equation}
\bar{Q}_{\lambda}\left(  t,y\right)  =\int_{[0,t]\times\lbrack y-s,\infty
)}\mathcal{M}_{\lambda}\left(  s+ds,r+dr\right)  , \label{QRep1}%
\end{equation}
and $\bar{Q}_{\lambda}\left(  0,y\right)  =0$ for $y\geq0$. Now, consider
\begin{equation}
\bar{q}(t,y)=\int_{0}^{t}\int_{y-s}^{\infty}v\left(  s,r\right)  drds.
\label{RepQ}%
\end{equation}
Note that
\[
\frac{\partial^{2}}{\partial y\partial t}\bar{q}(t,y)=-v\left(  t,y-t\right)
.
\]

Therefore, if $\bar{q}(\cdot,\cdot)$ is absolutely continuous and $\bar
{q}(0,y)=0$ for $y\geq0$, so that representation (\ref{RepQ}) is applicable,
one can formally compute the rate function of $\bar{Q}_{\lambda}\left(
\cdot,\cdot\right)  /\lambda$ evaluated at $\bar{q}(\cdot,\cdot)$ by
evaluating $\mathbf{J}\left(  \mu\right)  $ with
\[
\mu\left(  s,r\right)  =-\left.  \frac{\partial^{2}}{\partial y\partial t}%
\bar{q}(t,y)\right\vert _{t=s,y=s+r}
\]
for $s\in\lbrack0,T]$ and $r\in\lbrack0,\infty)$. In particular, this analysis
yields that $I\left(  \bar{q}\right)  $ should satisfy%
\begin{align*}
&  \sup_{\eta\left(  \cdot,\cdot\right)  \in C_{b}[0,T]\times\lbrack0,\infty
)}\int_{[0,T]\times\lbrack0,\infty)}[\eta\left(  s,r\right)  \mu\left(
s,r\right)  -\psi_{N}\left(  \eta\left(  s,r\right)  \right)  f\left(
r\right)  ]drds\\
&  =\sup_{\eta\left(  \cdot,\cdot\right)  \in C_{b}[0,T]\times\lbrack
0,\infty)}\int_{[0,T]\times\lbrack0,\infty)}\left[  \eta\left(  s,r\right)
\left(  -\frac{\partial^{2}}{\partial y\partial t}\bar{q}(s,s+r)\right)
-\psi_{N}\left(  \eta\left(  s,r\right)  \right)  f\left(  r\right)  \right]
drds\\
&  =\sup_{\eta\left(  \cdot,\cdot\right)  \in C_{b}[0,T]\times\lbrack
0,\infty)}\int_{[0,T]\times\lbrack s,\infty)}\left[  \eta\left(  s,u-s\right)
\left(  -\frac{\partial^{2}}{\partial y\partial t}\bar{q}(s,u)\right)
-(\exp(\eta\left(  s,u-s\right)  )-1)f\left(  u-s\right)  \right]  duds,
\end{align*}
which is, of course, equivalent to (\ref{R}) in the Poisson case assuming the
existence of a density $f\left(  \cdot\right)  $ for the distribution of the
service times. The previous form of the rate function was heuristically
obtained assuming that $y\geq t$. However, since all the information of the
infinite server queue is contained in the evolution of $(Q_{\lambda}\left(
t,y\right)  :t\in\lbrack0,T],u\geq0)$ and $Q_{\lambda}\left(  t,u\right)
=\bar{Q}_{\lambda}\left(  t,t+u\right)  $, we must have that the rate function
should be specified only over $\bar{q}\left(  t,y\right)  $, such that
$t\in\lbrack0,T]$ and $y\geq t$ so that $\partial^{2}\bar{q}(t,y)/\partial
y\partial t=0$ if $0\leq y\leq t\leq T$.

For the non-Poisson case one can argue using renewal arguments. We need to
compute the log-moment generating function of the vertical strip
$(\mathcal{M}_{\lambda}\left(  t+dt,r_{i}+dr\right)  :1\leq i\leq n)$, where
$r_{1}<r_{2}<...<r_{n}$ for an arbitrary partition $\left(  r_{i}:1\leq i\leq
n\right)  $. We obtain, using elementary properties of the multinomial
distribution together with an application of the key renewal theorem as in
\cite{Glynn95a},
\begin{align*}
&  E\left[  \exp\left(  \sum_{i=1}^{n}\theta\left(  t,r_{i}\right)
\mathcal{M}_{\lambda}\left(  t+dt,r_{i}+dr\right)  \right)  \right] \\
&  =E\left[  \left(  \sum_{i=1}^{n}\exp\left(  \theta\left(  t,r_{i}\right)
\right)  P\left(  V_{1}\in[r_{i},r_{i}+dr]\right)  \right)  ^{N\left(
\lambda(t+dt)\right)  -N\left(  \lambda t\right)  }\right] \\
&  =E\left[  \exp\left(  [N\left(  \lambda(t+dt)\right)  -N\left(  \lambda
t\right)  ]\log\left(  \sum_{i=1}^{n}\exp\left(  \theta\left(  t,r_{i}\right)
\right)  P\left(  V_{1}\in[r_{i},r_{i}+dr]\right)  \right)  \right)  \right]
\\
&  =\exp\left(  \lambda\psi_{N}\left(  \log\left(  \sum_{i=1}^{n}\exp\left(
\theta\left(  t,r_{i}\right)  \right)  P\left(  V_{1}\in\lbrack r_{i}%
,r_{i}+dr]\right)  \right)  \right)  +o\left(  \lambda\right)  \right)
\end{align*}
as $\lambda\rightarrow\infty$.

So, by pasting together vertical strips (i.e. ranging the parameter $t$) we
obtain that the family of random measures $\mathcal{M}_{\lambda}\left(
\cdot\right)  /\lambda$ is expected to satisfy a large deviations principle
under a suitable topology with rate function%
\[
\mathbf{J}\left(  \mu\right)  =\sup_{\theta\left(  \cdot,\cdot\right)  \in
C_{b}[0,T]\times\lbrack0,\infty)}\int_{[0,T]}\left[  \int_{0}^{\infty}%
\theta\left(  t,r\right)  \mu\left(  t,r\right)  dr-\psi_{N}\left(  \int
_{0}^{\infty}\exp\left(  \theta\left(  t,r\right)  \right)  dF\left(
r\right)  \right)  \right]  dt.
\]
The rest of the formal analysis proceeds similarly as in the Poisson case.

The formal argument just outlined, even if heuristic, suggests a potential
approach to developing sample path large deviations for $\bar{Q}_{\lambda
}/\lambda$. Namely, first develop a large deviations for the random measures
$\mathcal{M}_{\lambda}\left(  \cdot\right)  /\lambda$, and then apply the
contraction principle to obtain the desired large deviations result for
$\bar{Q}_{\lambda}/\lambda$. This approach, although intuitive, will not be
followed in our development. We found it easier to directly work with the
topology that we wish to impose. Part of the problem involved in making the
argument based on random measures rigorous in the setting of the topology that
is of interest to us is that indicator functions are not continuous, so the
contraction principle is not directly applicable if one is to endow the space
of measures with the weak convergence topology. Of course, one can proceed by
trying a different topology (stronger than weak convergence) or by trying to
use the extended contraction principle. However, the technical development, we
believe, would end up being more involved than the direct approach that we
will follow.

An additional concern that might arise at this point is our selection of
$\bar{Q}_{\lambda}/\lambda$ in order to represent the system status; as
opposed to $Q_{\lambda}/\lambda$, which might appear more natural at first
sight. Let us explain why $\bar{Q}_{\lambda}/\lambda$ is a more convenient
object to consider. Note that if $q\left(  s,r\right)  =\bar{q}(s,s+r)$, then
\begin{align*}
\frac{\partial^{2}}{\partial s\ \partial r}q\left(  s,r\right)   &
=\frac{\partial^{2}}{\partial s\ \partial r}\bar{q}(s,s+r)+\frac{\partial^{2}%
}{\partial r^{2}}\bar{q}(s,s+r)\\
&  =\frac{\partial^{2}}{\partial s\ \partial r}\bar{q}(s,s+r)+\frac
{\partial^{2}}{\partial r^{2}}q\left(  s,r\right)
\end{align*}
so%
\[
\frac{\partial^{2}}{\partial s\ \partial r}\bar{q}(s,s+r)=\frac{\partial^{2}%
}{\partial s\ \partial r}q\left(  s,r\right)  -\frac{\partial^{2}}{\partial
r^{2}}q\left(  s,r\right)  .
\]
Since $Q_{\lambda}\left(  t,u\right)  /\lambda=\bar{Q}_{\lambda}\left(
t,u+t\right)  /\lambda$ and our heuristic analysis suggests that the candidate
rate function of $\bar{Q}_{\lambda}\left(  t,y\right)  /\lambda$ is given by%
\[
\sup_{\theta(\cdot,\cdot)\in C_{b}[0,T]\times\lbrack0,\infty)}\int_{0}%
^{T}\left[  \int_{t}^{\infty}\theta(t,y-t)\left(  -\frac{\partial^{2}%
}{\partial t\ \partial y}\bar{q}(t,y)\right)  dy-\psi_{N}\left(  \log\left(
\int_{0}^{\infty}e^{\theta(t,y)}dF\left(  y\right)  \right)  \right)  \right]
dt,
\]
it is then sensible to conjecture, making $y=u+t$, a representation based on
$q\left(  t,u\right)  =\bar{q}(t,t+u)$ via%
\begin{align}
&  \sup_{\theta(\cdot,\cdot)\in C_{b}[0,T]\times\lbrack0,\infty)}\int_{0}%
^{T}\left[  \int_{0}^{\infty}\theta(t,u)\left(  -\frac{\partial^{2}}{\partial
t\ \partial y}\bar{q}(t,u+t)\right)  du-\psi_{N}\left(  \log\left(  \int
_{0}^{\infty}e^{\theta(t,u)}dF\left(  u\right)  \right)  \right)  \right]
dt\nonumber\\
&  =\sup_{\theta(\cdot,\cdot)\in C_{b}[0,T]\times\lbrack0,\infty)}\int_{0}%
^{T}\left[  \int_{0}^{\infty}\theta(t,u)\left(  -\frac{\partial^{2}}{\partial
t\ \partial u}q\left(  t,u\right)  +\frac{\partial^{2}}{\partial u^{2}%
}q\left(  t,u\right)  \right)  du-\psi_{N}\left(  \log\left(  \int_{0}%
^{\infty}e^{\theta(t,u)}dF\left(  u\right)  \right)  \right)  \right]  dt.
\label{Qrate}%
\end{align}
This representation, in turn, suggests that for the rate function to be finite
at $q\left(  \cdot\right)  $, one might need to impose as a necessary
condition the existence of $\partial^{2}q\left(  t,u\right)  /\partial^{2}u$.
Nevertheless, as we shall see in our examples, one might have a finite-valued
rate function even in cases in which $\partial q\left(  t,\cdot\right)
/\partial u$ is not even continuous for every value of $t\in\left(
0,T\right)  $.

\subsection{Construction of an Auxiliary Continuous Process
\label{SubSectAuxiliary}}

In order to prove Theorem \ref{main thm unbdd} we introduce an auxiliary
approximating continuous process, $\tilde{Q}_{\lambda}$, which shall be shown
to be exponentially equivalent to the process of interest $\bar{Q}_{\lambda}$
in the uniform norm. The construction of $\tilde{Q}_{\lambda}$ will be based
on polygonal interpolations, so it will be convenient to introduce some notation.

First, given $\left(  t,y\right)  $ and $\left(  t^{\prime},y^{\prime}\right)
$ where $t\neq t^{\prime}$ we write $Q_{\lambda}\left(  t,y\right)
\leftrightarrow Q_{\lambda}\left(  t^{\prime},y^{\prime}\right)  $ to denote
the straight line that joins the points $\left(  t,y,Q_{\lambda}\left(
t,y\right)  \right)  $ and $\left(  t^{\prime},y^{\prime},Q_{\lambda}\left(
t^{\prime},y^{\prime}\right)  \right)  $ in the associated three-dimensional space.

Now, given a sample path of the process $Q_{\lambda}\left(  \cdot\right)  $,
consider the set $\{t_{1},...,t_{m}\}$ of points corresponding to either
arrivals or departures in the interval $[0,T]$ (in increasing order); and put
$t_{0}=0$ and $t_{m+1}=T$. First let us consider $Q_{\lambda}(t,\cdot)$ for a
fixed time $t\in\{t_{0},...,t_{m+1}\}$. Let $\{y_{1}\left(  t\right)
,...,y_{n\left(  t\right)  }\left(  t\right)  \}$ be the set of
discontinuities of the function $Q_{\lambda}\left(  t,\cdot\right)  $ (recall
again that $Q_{\lambda}(t,\cdot)$ is a right continuous non-increasing step
function). Interpolate using straight lines forming the segments $Q_{\lambda
}(t,0)\leftrightarrow Q_{\lambda}(t,y_{1}\left(  t\right)  ),$ $Q_{\lambda
}(t,y_{1}\left(  t\right)  )\leftrightarrow Q_{\lambda}(t,y_{2}\left(
t\right)  ),\ldots,Q_{\lambda}(t,y_{n\left(  t\right)  -1}\left(  t\right)
)\leftrightarrow Q_{\lambda}(t,y_{n\left(  t\right)  }\left(  t\right)  )$.

The next step is to join the end points of these straight lines to the end
points of adjacent (suitably matched in the time axis) end points of straight
lines in order to form segments of adjacent planes. In order to do this
matching note that for each successive $t_{i}$ and $t_{i+1}$, either
$Q_{\lambda}(t_{i+1},\cdot)$ has one less discontinuous point than
$Q_{\lambda}\left(  t_{i},\cdot\right)  $ (i.e. a departure occurs at
$t_{i+1}$) or one more discontinuity point (i.e. an arrival occurs at
$t_{i+1}$); the exception is the last segment from $t_{m}$ to $t_{m+1}=T$,
where there might be no difference between the number of discontinuity points
between $Q_{\lambda}(t_{m},\cdot)$ and $Q_{\lambda}\left(  t_{m+1}%
,\cdot\right)  $. Note that batch arrivals are not possible since the
interarrival times are positive.

According to the notation introduced earlier for discontinuity points,
$y_{1}\left(  t_{i}\right)  ,...,y_{n\left(  t_{i}\right)  }\left(
t_{i}\right)  $ are the discontinuous points of $Q_{\lambda}(t_{i},\cdot)$
with corresponding values $Q_{\lambda}(t_{i},y_{1}\left(  t_{i}\right)  )$,
$Q_{\lambda}(t_{i},y_{2}\left(  t_{i}\right)  )$, $\ldots$, $Q_{\lambda}%
(t_{i},y_{n\left(  t_{i}\right)  }\left(  t_{i}\right)  )$. We will explain
how to joint discontinuity points of $Q_{\lambda}\left(  t_{i},\cdot\right)  $
with those from $Q_{\lambda}\left(  t_{i+1},\cdot\right)  $.

Suppose a departure occurs at time $t_{i+1}$. Then we can label the
discontinuous points of $Q_{\lambda}(t_{i+1},\cdot)$ as $y_{1}\left(
t_{i+1}\right)  ,...,y_{n\left(  t_{i+1}\right)  }\left(  t_{i+1}\right)  $,
with $n\left(  t_{i+1}\right)  =n\left(  t_{i}\right)  -1$. We form a set of
straight lines $Q_{\lambda}(t_{i},0)\leftrightarrow Q_{\lambda}(t_{i+1}%
,0)\leftrightarrow Q_{\lambda}(t_{i},y_{1}\left(  t_{i}\right)
)\leftrightarrow Q_{\lambda}(t_{i+1},y_{1}\left(  t_{i+1}\right)
)\leftrightarrow Q_{\lambda}(t_{i},y_{2}\left(  t_{i}\right)  )\leftrightarrow
Q_{\lambda}(t_{i+1},y_{2}\left(  t_{i+1}\right)  )\leftrightarrow
...\leftrightarrow$ $Q_{\lambda}(t_{i+1},y_{n\left(  t_{i+1}\right)  }\left(
t_{i+1}\right)  )\leftrightarrow Q_{\lambda}(t_{i},y_{n\left(  t_{i}\right)
}\left(  t_{i}\right)  )$ in a zig-zag manner; together with another set of
straight lines $Q_{\lambda}(t_{i},0)\leftrightarrow Q_{\lambda}(t_{i}%
,y_{1}\left(  t_{i}\right)  )\leftrightarrow\ldots\leftrightarrow Q_{\lambda
}(t_{i},y_{n\left(  t_{i}\right)  }\left(  t_{i}\right)  )$, and also the set
of straight lines $Q_{\lambda}(t_{i+1},0)\leftrightarrow Q_{\lambda}%
(t_{i+1},y_{1}\left(  t_{i+1}\right)  )\leftrightarrow\ldots\leftrightarrow
Q_{\lambda}(t_{i+1},y_{n\left(  t_{i+1}\right)  }\left(  t_{i+1}\right)  )$.
These three sets describe a series of adjacent triangular planar sections
which jointly form a continuous surface.

Similarly, suppose that an arrival occurs at time $t_{i+1}$. Then we can label
the discontinuous points of $Q_{\lambda}(t_{i+1},\cdot)$ as $Q_{\lambda
}(t_{i+1},y_{1}\left(  t_{i+1}\right)  )$, $\ldots$, $Q_{\lambda}%
(t_{i+1},y_{n\left(  t_{i+1}\right)  }\left(  t_{i+1}\right)  )$, with
$n\left(  t_{i+1}\right)  =n\left(  t_{i}\right)  +1$. We then form the set of
straight lines $Q_{\lambda}(t_{i+1},0)\leftrightarrow Q_{\lambda}%
(t_{i},0)\leftrightarrow Q_{\lambda}(t_{i+1},y_{1}\left(  t_{i+1}\right)
)\leftrightarrow Q_{\lambda}(t_{i},y_{1}\left(  t_{i}\right)  )\leftrightarrow
Q_{\lambda}(t_{i+1},y_{2}\left(  t_{i+1}\right)  )\leftrightarrow
...\leftrightarrow$ $Q_{\lambda}(t_{i},y_{n\left(  t_{i}\right)  }\left(
t_{i}\right)  )\leftrightarrow Q_{\lambda}(t_{i+1},y_{n\left(  t_{i+1}\right)
}\left(  t_{i+1}\right)  )$. Again, together with a second set of straight
lines $Q_{\lambda}(t_{i},0)\leftrightarrow Q_{\lambda}(t_{i},y_{1}\left(
t_{i}\right)  )\leftrightarrow\ldots\leftrightarrow Q_{\lambda}(t_{i}%
,y_{n\left(  t_{i}\right)  }\left(  t_{i}\right)  )$, and a third set of
straight lines, namely $Q_{\lambda}(t_{i+1},0)\leftrightarrow Q_{\lambda
}(t_{i+1},y_{1}\left(  t_{i+1}\right)  )\leftrightarrow\ldots\leftrightarrow
Q_{\lambda}(t_{i+1},y_{n(t_{i+1})}(t_{i+1}))$. These three sets of straight
lines, once again describe a series of adjacent triangular planar sections
which jointly form a continuous surface. The last time interval from $t_{m}$
to $T$ is dealt with similarly, with perhaps one less triangle formed if
$n\left(  t_{m}\right)  =n\left(  T\right)  $.

The continuous function $(Q_{\lambda}^{\ast}(t,y):0\leq t\leq T,y\geq0)$ is
defined by concatenating all these adjacent triangular planar regions as one
varies $t_{i}$ and $t_{i+1}$ for $i\in\{0,1,...,m\}$, and setting $Q_{\lambda
}^{*}(t,y)=0$ for the region where $y$ is beyond the boundary of the last
triangular plane i.e. beyond the lines $Q_{\lambda}(t_{i},y_{n(t_{i})}%
(t_{i}))\leftrightarrow Q_{\lambda}(t_{i+1},y_{n(t_{i+1})}(t_{i+1}%
)),i\in\{0,1,...,m\}$. It is immediate from the previous construction, and the
fact that $Q_{\lambda}(t,\cdot)$ is non increasing, that $Q_{\lambda}^{\ast
}(t,\cdot)$ is also non increasing for each $t\in\lbrack0,T]$.

Then, we define our auxiliary process $\tilde{Q}_{\lambda}\left(  t,y\right)
$ for $y\geq t$ via
\begin{equation}
\tilde{Q}_{\lambda}\left(  t,y\right)  =Q_{\lambda}^{\ast}(t,y-t).
\label{DefQtilde}%
\end{equation}
In order to define $\tilde{Q}_{\lambda}\left(  t,y\right)  $ for $0\leq y\leq
t\leq T$, first let $\tilde{N}_{\lambda}\left(  \cdot\right)  $ be the
continuous process obtained by the polygonal interpolation of $N_{\lambda
}\left(  \cdot\right)  $, so that $\tilde{N}_{\lambda}\left(  0\right)  =0$
and $\tilde{N}_{\lambda}\left(  A_{k}/\lambda\right)  =N_{\lambda}\left(
A_{k}/\lambda\right)  $ for all $k\geq1$. Then, define%
\[
\tilde{Q}_{\lambda}\left(  t,y\right)  =\tilde{Q}_{\lambda}\left(  t\wedge
y,y\right)  +\tilde{N}_{\lambda}\left(  t\right)  -\tilde{N}_{\lambda}\left(
t\wedge y\right)  ,
\]
analogous to (\ref{EqQbarExt}). Observe that%

\begin{equation}
||\tilde{Q}_{\lambda}-\bar{Q}_{\lambda}||_{\mathcal{D}}\leq4, \label{Bnd1}%
\end{equation}
where $||\cdot||_{\mathcal{D}}$ represents the uniform norm over the set
$\mathcal{D}$.

\section{Bounded Service Times\label{SectionBdd}}

In addition to the assumptions imposed in Section \ref{SectionNotation} here
we also assume that $P\left(  V_{n}\in\lbrack0,M]\right)  =1$ for $M\in\left(
0,\infty\right)  $.

We define $\mathcal{D}_{M}=\{(t,u):0\leq t\leq T,0\leq u\leq M+T\}$ and let
$C_{+}(\mathcal{D}_{M})$ be the space of functions $(x\left(  t,u\right)
:\left(  t,u\right)  \in\mathcal{D}_{M})$ such that $x\left(  \cdot\right)  $
is continuous in both components, $x\left(  0,u\right)  =0$ for $u\geq0$, and
$x\left(  t,\cdot\right)  $ vanishes at infinity. If in addition, $x\left(
\cdot,\cdot\right)  $ is absolutely continuous, and $\partial x\left(
t,y\right)  /\partial t\partial y=0$ almost everywhere on $0\leq y\leq t\leq
T$, we say that $x\left(  \cdot,\cdot\right)  \in$ $AC_{+}(\mathcal{D}_{M})$.

Our initial goal is to obtain a large deviations principle for $(\tilde
{Q}_{\lambda}/\lambda:\lambda>0)$ as $\lambda\rightarrow\infty$ on the space
$(C_{+}(\mathcal{D}_{M}),\left\vert \left\vert \cdot\right\vert \right\vert
_{\mathcal{D}_{M}})$; we then will use (\ref{Bnd1}) to obtain the
corresponding large deviations principle for $(\bar{Q}_{\lambda}%
/\lambda:\lambda>\infty)$.

We start by deriving a large deviations principle in the topology of pointwise
convergence. The proof of this result will be given at the end of this section.

\begin{lemma}
Let $\mathcal{X}$ consist of all the maps from $\mathcal{D}_{M}$ to
$\mathbb{R}$, and we equip $\mathcal{X}$ with the topology of pointwise
convergence on $\mathcal{D}_{M}$. Then $\tilde{Q}_{\lambda}/\lambda$ satisfies
a large deviations principle with good rate function $I(\bar{q})$ defined by
\begin{equation}
\sup_{\theta(\cdot,\cdot)\in\mathcal{C}[0,T]\times\lbrack0,M]}\int_{0}%
^{T}\left[  \int_{t}^{M+t}\theta(t,y-t)\left(  -\frac{\partial^{2}}{\partial
t\ \partial y}\bar{q}(t,y)\right)  dy-\psi_{N}\left(  \log\left(  \int_{0}%
^{M}e^{\theta(t,y)}dF\left(  y\right)  \right)  \right)  \right]  dt
\label{rate}%
\end{equation}
if $\bar{q}(\cdot)\in AC_{+}(\mathcal{D}_{M})$, and $I(\bar{q})=\infty$
otherwise. Here $\mathcal{C}[0,T]\times\lbrack0,M]$ denotes the set of all
continuous functions on $[0,T]\times\lbrack0,M]$.
%regulated functions on $[0,M]$ i.e. functions whose left and right limits both exist at every point on $[0,M]$ (except at the boundary 0 and $M$ where only left limit and right limit exists respectively). (THE PROOF STILL HAS SOME ISSUE...)
\label{projection limit}
\end{lemma}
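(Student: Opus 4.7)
The plan is to invoke the Dawson--G\"artner projective limit theorem (see \cite{DEMZEI98}, Section~4.6). The topology of pointwise convergence on $\mathcal{X}$ is precisely the projective limit topology generated by the finite-dimensional projections $\pi_{F}\colon x\mapsto(x(t_{i},y_{i}))_{i=1}^{n}$ indexed by finite subsets $F=\{(t_{i},y_{i})\}_{i=1}^{n}\subset\mathcal{D}_{M}$. Inequality (\ref{Bnd1}) gives $\|\tilde{Q}_{\lambda}/\lambda-\bar{Q}_{\lambda}/\lambda\|_{\mathcal{D}}\le 4/\lambda$ deterministically, so at any finite collection of points the two processes are exponentially equivalent; hence it suffices to establish the LDP for $\bar{Q}_{\lambda}/\lambda$, with which I would work throughout.

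For each finite $F$ I would verify the G\"artner--Ellis hypotheses for $\pi_{F}\bar{Q}_{\lambda}/\lambda$ by computing the limiting log-moment generating function. Using the representation $\bar{Q}_{\lambda}(t,y)=\sum_{n=1}^{N_{\lambda}(T)}\mathbf{1}\{A_{n}/\lambda\le t,\,V_{n}+A_{n}/\lambda>y\}$ (valid for every $(t,y)\in\mathcal{D}_{M}$), one obtains
\[
\sum_{i=1}^{n}\theta_{i}\bar{Q}_{\lambda}(t_{i},y_{i})=\sum_{m=1}^{N_{\lambda}(T)}h_{F}(A_{m}/\lambda,V_{m}),\qquad h_{F}(s,r):=\sum_{i=1}^{n}\theta_{i}\mathbf{1}\{s\le t_{i},\,r+s>y_{i}\}.
\]
Refining the arrival axis by a grid $0=s_{0}<\dots<s_{K}=T$, approximating $h_{F}(s,r)$ on each slab $(s_{k},s_{k+1}]$ by $h_{F}(s_{k},r)$, conditioning on the slab counts $J_{k}:=N_{\lambda}(s_{k+1})-N_{\lambda}(s_{k})$, and using that the $V_{m}$'s are i.i.d.\ and independent of the arrival process, one gets
\[
E\!\left[\exp\!\Bigl(\textstyle\sum_{i}\theta_{i}\bar{Q}_{\lambda}(t_{i},y_{i})\Bigr)\right]\;\approx\;E\!\left[\textstyle\prod_{k}\Bigl(\int_{0}^{M}e^{h_{F}(s_{k},r)}dF(r)\Bigr)^{J_{k}}\right].
\]
Asymptotic factorization of the joint MGF of $(J_{k})_{k}$ under the $\lambda$-scaling (following the renewal computation sketched in Section~\ref{SectionGuessing} and in \cite{Glynn95a}) then yields $\prod_{k}\exp(\lambda\psi_{N}(\cdot)(s_{k+1}-s_{k}))$, and letting the grid refine delivers
\[
\Lambda_{F}(\theta)=\int_{0}^{T}\psi_{N}\!\left(\log\int_{0}^{M}e^{h_{F}(s,r)}dF(r)\right)ds,
\]
which is finite everywhere and, by the assumed $C^{1}$ regularity of $\psi_{N}$, essentially smooth; G\"artner--Ellis then yields a finite-dimensional LDP with rate $I_{F}(\xi)=\sup_{\theta\in\mathbb{R}^{n}}\{\langle\theta,\xi\rangle-\Lambda_{F}(\theta)\}$.

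Dawson--G\"artner now produces an LDP on $\mathcal{X}$ with good rate $I(\bar{q})=\sup_{F}I_{F}(\pi_{F}\bar{q})$, and the final step is to identify this variational expression with (\ref{rate}). For $\bar{q}\in AC_{+}(\mathcal{D}_{M})$, setting $v(s,r):=-\partial^{2}\bar{q}(s,s+r)/\partial t\,\partial y$ and using the fundamental theorem of calculus together with the boundary conditions $\bar{q}(0,\cdot)=0$ and $\partial^{2}\bar{q}/\partial t\,\partial y=0$ on $\{y\le t\}$, one has $\bar{q}(t_{i},y_{i})=\int_{0}^{t_{i}}\int_{y_{i}-s}^{\infty}v(s,r)\,dr\,ds$ for each $i$. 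Fubini then turns $\sum_{i}\theta_{i}\bar{q}(t_{i},y_{i})$ into $\int_{0}^{T}\int_{0}^{M}h_{F}(s,r)v(s,r)\,dr\,ds$, and choosing finite grids and coefficients $\theta_{i}$ (via iterated finite differences of a prescribed $\theta\in\mathcal{C}[0,T]\times[0,M]$) so that $h_{F}$ is a step-function approximation of $\theta(s,r)$ converts the finite-dimensional Legendre transform into the functional one in (\ref{rate}) in the limit. Conversely, when $\bar{q}\notin AC_{+}(\mathcal{D}_{M})$---either failing absolute continuity, or the boundary conditions, or having $\partial^{2}\bar{q}/\partial t\,\partial y\neq 0$ somewhere on $\{y\le t\}$---concentrating the test grid on the offending region makes $I_{F}(\pi_{F}\bar{q})$ arbitrarily large, giving $I(\bar{q})=\infty$. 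I expect the main obstacle to be precisely this identification: matching the supremum over finite grids in the Dawson--G\"artner rate with the functional supremum in (\ref{rate}) requires a careful density/minimax exchange, and extracting the absolute-continuity condition and the $\{y\le t\}$ constraint from the unrestricted projective limit relies on (\ref{EqQbarExt}), which shows that on $\{y<t\}$ the process $\bar{Q}_{\lambda}(t,y)-\bar{Q}_{\lambda}(y,y)$ is driven only by the arrival counts and therefore should contribute only through the $\psi_{N}$-term.
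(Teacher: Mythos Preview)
Your proposal is correct and follows essentially the same route as the paper: Dawson--G\"artner for the projective limit, G\"artner--Ellis for the finite-dimensional marginals with the limiting cumulant obtained from the renewal asymptotics of \cite{Glynn95a}, and then an approximation argument (step functions versus continuous $\theta$) to match the projective rate with~(\ref{rate}).

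The only notable bookkeeping difference is that the paper projects onto \emph{rectangular} grids $\kappa=\{(t_i,y_j)\}$ constrained so that $y_\ell=t_\ell$ for $\ell\le\min(m,n)$, and works with the second differences $\Delta_{ij}(\lambda)=\bar Q_\lambda(t_i,y_{j-1})-\bar Q_\lambda(t_i,y_j)-\bar Q_\lambda(t_{i-1},y_{j-1})+\bar Q_\lambda(t_{i-1},y_j)$ (Lemma~\ref{LemAuxLDPFinite}) before recovering $\bar Q_\lambda(t_i,y_j)$ via the contraction principle, whereas you project onto arbitrary finite point sets and compute the cumulant of $\bar Q_\lambda$ directly. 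The paper's choice buys two conveniences: the log-MGF factors cleanly across the $t$-index, and the constraint $\partial^2\bar q/\partial t\,\partial y=0$ on $\{y\le t\}$ falls out automatically because $\Delta_{ij}\equiv 0$ whenever $i>j$ on these grids; you instead extract that constraint from~(\ref{EqQbarExt}), which is equally valid. Your expectation that the identification step is the main obstacle is borne out---the paper devotes most of its effort there, handling both directions of the step/continuous approximation by dominated convergence and dispatching the non--absolutely-continuous case by taking $\theta_{ij}=\mathrm{sgn}(\delta_{ij})$, exactly as you sketch.
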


\bigskip

In order to lift the large deviations principle indicated in Lemma
\ref{projection limit} to the uniform topology we need the following result on
exponential tightness; we shall also give the proof of this result at the end
of this section.

\begin{lemma}
$\tilde{Q}_{\lambda}/\lambda$ is exponentially tight in $C_{+}(\mathcal{D}%
_{M})$ equipped with the topology of uniform convergence. \label{tightness}
\end{lemma}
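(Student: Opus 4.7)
The domain $\mathcal{D}_M=[0,T]\times[0,M+T]$ is compact, so Arzel\`a--Ascoli characterises relative compactness in $C(\mathcal{D}_M)$ as uniform boundedness plus uniform equicontinuity; the structural constraints defining $C_+(\mathcal{D}_M)$ ($x(0,u)=0$, $x(t,\cdot)$ non-increasing) are closed under uniform convergence. Consequently, for each $\alpha>0$ I would take as candidate compact set
\[
K_\alpha=\bigl\{x\in C_+(\mathcal{D}_M):\|x\|_{\mathcal{D}_M}\le B,\ \omega(x,\delta_n)\le 1/n\ \text{for all } n\ge 1\bigr\},
\]
where $\omega(\cdot,\delta)$ is the usual uniform modulus of continuity on $\mathcal{D}_M$, and $B=B(\alpha)<\infty$ and $\delta_n=\delta_n(\alpha)\downarrow 0$ are to be chosen. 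By a union bound, it suffices to show $P(\|\tilde Q_\lambda/\lambda\|_{\mathcal{D}_M}>B)\le e^{-(\alpha+1)\lambda}$ and $P(\omega(\tilde Q_\lambda/\lambda,\delta_n)>1/n)\le e^{-(\alpha+n+1)\lambda}$ for all $\lambda$ large enough.

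\textbf{Key steps.} \emph{(Uniform bound.)} By (\ref{Bnd1}), $\tilde Q_\lambda\le \bar Q_\lambda+4\le N_\lambda(T)+4$, and the exponential moment hypothesis on $U_n$ yields $P(N_\lambda(T)\ge B\lambda)\le e^{-\lambda J(B)}$ with $J(B)\to\infty$ (by a Chernoff bound on $A_{\lceil B\lambda\rceil}\le T$); choose $B$ large. \emph{(Time-direction modulus.)} By (\ref{Bnd1}) the modulus of $\tilde Q_\lambda/\lambda$ is within $8/\lambda$ of the oscillation of $\bar Q_\lambda/\lambda$; for $t\le t'$, $0\le\bar Q_\lambda(t',y)-\bar Q_\lambda(t,y)\le N_\lambda(t')-N_\lambda(t)$, so discretising $[0,T]$ on a $\delta$-grid of $O(1/\delta)$ points reduces $\sup_{|t-t'|\le\delta,\,y}|\bar Q_\lambda(t',y)-\bar Q_\lambda(t,y)|$ to a union over cell increments, each controlled by Chernoff whose rate blows up as $1/\delta\to\infty$. \emph{(Service-direction modulus.)} For $y<y'$, monotonicity of $\bar Q_\lambda(t,\cdot)$ gives
\[
|\bar Q_\lambda(t,y)-\bar Q_\lambda(t,y')|\le \sum_{n=1}^{N_\lambda(T)}I\bigl(V_n+A_n/\lambda\in(y,y']\bigr).
\]
Partitioning $[0,M+T]$ on a $\delta$-grid, monotonicity again dominates the supremum over all windows of width $\delta$ by $O(1/\delta)$ cell counts. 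Conditioning on the arrival times turns each count into a sum of conditionally independent Bernoullis with parameters bounded by the $F$-modulus $\omega_F(\delta):=\sup_{|a-b|\le \delta}|F(a)-F(b)|\to 0$ (since $F$ is continuous, hence uniformly continuous on $[0,M]$); Chernoff on the high-probability event $\{N_\lambda(T)\le B\lambda\}$ furnishes the exponential estimate.

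\textbf{Main obstacle.} The $y$-direction modulus is the delicate step: a tail estimate uniform over a continuum of windows $(y,y']$ and times $t$. Two technical levers make it go through: (i) the monotonicity of $\bar Q_\lambda(t,\cdot)$, which collapses the continuum supremum onto a grid of cardinality $O(1/\delta)$; and (ii) continuity of $F$ (Section \ref{SubsectionAssumptions}), which makes the Bernoulli parameters $\omega_F(\delta)$ vanish uniformly as $\delta\downarrow 0$, so that the per-cell Chernoff exponent dominates both the grid cardinality and the target level $\alpha+n+1$. A dyadic schedule $\delta_n\downarrow 0$, chosen slowly enough that $\omega_F(\delta_n)$ decays yet fast enough that the exponential bounds sum over $n$, completes the construction of $K_\alpha$.
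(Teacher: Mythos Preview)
Your proposal is correct and follows the same high-level scheme as the paper (Arzel\`a--Ascoli on the compact rectangle $\mathcal D_M$, reduction from $\tilde Q_\lambda$ to $\bar Q_\lambda$ via (\ref{Bnd1}), a grid discretisation, Chernoff bounds, and uniform continuity of $F$), but the implementation genuinely differs.

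The paper works with $Q_\lambda(t,y)=\bar Q_\lambda(t,t+y)$ on $[0,T]\times[0,M]$ and treats the two coordinates \emph{jointly}: for each grid cell $(m,n)$ it dominates the local oscillation by $\mathcal N_1^\delta(m,n,\lambda)+\mathcal N_2^\delta(m,n,\lambda)$, where $\mathcal N_1$ counts customers at time $m\delta$ with residual service in a $3\delta$-band and $\mathcal N_2$ counts new arrivals in $[m\delta,(m+1)\delta]$ with long enough service. It then computes the Gartner--Ellis limit $\psi_\delta(\theta;m,n)$ for $\mathcal N_1+\mathcal N_2$ directly via the key-renewal / Glynn limit for $N_\lambda$, and shows $\psi_\delta(\theta;m,n)\to 0$ uniformly in $(m,n)$ as $\delta\downarrow 0$ using $\alpha(\delta)=\sup_x P(x<V\le x+3\delta)\to 0$. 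Your route instead separates the $t$- and $y$-directions: the $t$-direction is controlled by $N_\lambda(t')-N_\lambda(t)$ alone, and the $y$-direction is handled by \emph{conditioning on the arrival epochs}, turning the cell count into a sum of at most $N_\lambda(T)$ conditionally independent Bernoullis with parameter $\le\omega_F(\delta)$, and then invoking Chernoff on the high-probability event $\{N_\lambda(T)\le B\lambda\}$. This conditioning device is more elementary than the paper's Glynn-type log-MGF limit (for the $y$-direction you never need the renewal asymptotics), at the modest cost of the extra intersection with $\{N_\lambda(T)\le B\lambda\}$. For the $t$-direction your phrase ``Chernoff whose rate blows up as $1/\delta\to\infty$'' is the only point that wants a word of care: for a \emph{renewal} increment $N_\lambda((m{+}1)\delta)-N_\lambda(m\delta)$ one either uses the paper's $\psi_N$-limit, giving rate $-\theta\eta+\psi_N(\theta)\delta$, or reduces (on $\{N_\lambda(T)\le B\lambda\}$ again) to the event $\{U_{j+1}+\cdots+U_{j+\lceil\eta\lambda\rceil}\le\lambda\delta\text{ for some }j\le B\lambda\}$ and applies Cram\'er to i.i.d.\ $U_i$'s; both give a rate tending to $-\infty$ as $\delta\downarrow 0$ and $\theta\uparrow\infty$. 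Either route closes the argument.
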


\bigskip

Using the previous two lemmas we are ready to state and prove the main result
of this section, which is a version of Theorem \ref{main thm unbdd} for the
case of bounded service times.

\begin{thm}
\label{ThmBdd1} $\tilde{Q}_{\lambda}/\lambda$ satisfies a large deviations
principle with good rate function defined in \eqref{rate} under the uniform
topology on $\mathcal{D}_{M}$.
\end{thm}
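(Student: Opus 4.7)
The strategy is to deduce the uniform LDP from Lemma \ref{projection limit} and Lemma \ref{tightness} by a topology-strengthening argument: exponential tightness in the finer uniform topology, combined with an LDP in the coarser pointwise topology, should upgrade to an LDP in the finer topology. Denote the pointwise topology by $\tau_p$ and the uniform topology on $C_+(\mathcal{D}_M)$ by $\tau_u$.

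The engine of the argument is the simple geometric observation that on any $\tau_u$-compact subset $K$ of $C_+(\mathcal{D}_M)$ the two topologies coincide. Indeed, if $(f_n)\subset K$ satisfies $f_n\to f$ pointwise, then $\tau_u$-compactness of $K$ produces a uniformly convergent subsequence whose limit must equal $f$ by uniqueness of pointwise limits; applying this to every subsequence forces $f_n\to f$ uniformly. Since $(\mathcal{X},\tau_p)$ is Hausdorff, such a $K$ is in addition $\tau_p$-closed.

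For the upper bound, fix a $\tau_u$-closed set $C$ and $\alpha>0$. By Lemma \ref{tightness}, there is a $\tau_u$-compact $K_\alpha$ with $\limsup_\lambda \lambda^{-1}\log P(\tilde{Q}_\lambda/\lambda\notin K_\alpha)\le -\alpha$. Then $C\cap K_\alpha$ is $\tau_u$-closed in $K_\alpha$, hence $\tau_p$-closed in $K_\alpha$ by the topology coincidence, and hence $\tau_p$-closed in $\mathcal{X}$ since $K_\alpha$ itself is $\tau_p$-closed. Splitting $P(\tilde{Q}_\lambda/\lambda\in C)\le P(\tilde{Q}_\lambda/\lambda\in C\cap K_\alpha)+P(\tilde{Q}_\lambda/\lambda\notin K_\alpha)$, invoking the $\tau_p$-upper bound from Lemma \ref{projection limit} on the first term, and sending $\alpha\to\infty$, yields $\limsup_\lambda \lambda^{-1}\log P(\tilde{Q}_\lambda/\lambda\in C)\le -\inf_{q\in C}I(q)$.

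The lower bound is the most delicate step. Let $O$ be $\tau_u$-open and $\bar{q}\in O$ with $I(\bar{q})<\infty$; fix $\alpha>I(\bar{q})+1$. Arranging that $\bar{q}\in K_\alpha$ (for instance by adjoining the point $\bar{q}$ to the compact $K_\alpha$, which preserves $\tau_u$-compactness), the set $O\cap K_\alpha$ is $\tau_u$-open in $K_\alpha$, hence $\tau_p$-open in $K_\alpha$ by the topology coincidence, so there exists a $\tau_p$-open $\tilde{O}\subset\mathcal{X}$ with $\bar{q}\in\tilde{O}$ and $\tilde{O}\cap K_\alpha=O\cap K_\alpha\subset O$. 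Then $P(\tilde{Q}_\lambda/\lambda\in O)\ge P(\tilde{Q}_\lambda/\lambda\in\tilde{O})-P(\tilde{Q}_\lambda/\lambda\notin K_\alpha)$; the first probability is lower-bounded by the $\tau_p$-LDP of Lemma \ref{projection limit} applied to $\tilde{O}$, while the second is dominated thanks to the choice $\alpha>I(\bar{q})+1$, yielding $\liminf_\lambda \lambda^{-1}\log P(\tilde{Q}_\lambda/\lambda\in O)\ge -I(\bar{q})$. Taking the infimum over $\bar{q}\in O$ completes the proof. Goodness of $I$ in $\tau_u$ follows from the fact that its $\tau_p$-compact level sets can be absorbed into $\tau_u$-compacts produced by Lemma \ref{tightness}, on which the two topologies agree; the main technical obstacle to watch is precisely this manipulation of relative-versus-ambient openness and closedness used to manufacture the sets $\tilde{O}$ and $C\cap K_\alpha$ in the respective bounds.
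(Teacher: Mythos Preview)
Your proposal is correct and takes essentially the same approach as the paper: both combine the pointwise LDP of Lemma~\ref{projection limit} with the exponential tightness of Lemma~\ref{tightness} to lift the LDP to the uniform topology. The only difference is packaging: the paper dispatches the argument in two lines by invoking Lemma 4.1.5(b) and Corollary 4.2.6 of Dembo--Zeitouni (the inverse contraction principle), whereas you have essentially reproved Corollary 4.2.6 by hand via the observation that $\tau_p$ and $\tau_u$ agree on $\tau_u$-compact sets.
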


\begin{proof}
Since the domain of $I\left(  \cdot\right)  $ is a subset of $C_{+}\left(
\mathcal{D}_{M}\right)  $, and $\tilde{Q}_{\lambda}/\lambda\in C_{+}\left(
\mathcal{D}_{M}\right)  $ with probability 1, the large deviations principle
in Lemma \ref{projection limit} holds in the space $C_{+}\left(
\mathcal{D}_{M}\right)  $ with pointwise topology, (Lemma 4.1.5 (b) in
\cite{DEMZEI98}). Since by Lemma \ref{tightness} $\tilde{Q}_{\lambda}/\lambda$
is exponentially tight in $\left(  C_{+}\left(  \mathcal{D}_{M}\right)
,\left\vert \left\vert \cdot\right\vert \right\vert _{\mathcal{D}_{M}}\right)
$ the same large deviations principle holds in $\left(  C_{+}\left(
\mathcal{D}_{M}\right)  ,\left\vert \left\vert \cdot\right\vert \right\vert
_{\mathcal{D}_{M}}\right)  $ (Corollary 4.2.6 in \cite{DEMZEI98}) and the
result follows.
\end{proof}

\bigskip

As a corollary of the previous theorem we obtain that $(\bar{Q}_{\lambda
}/\lambda:\lambda>0)$ satisfies a large deviations principle on $(L_{+,\infty
}(\mathcal{D}_{M})$, $\left\vert \left\vert \cdot\right\vert \right\vert
_{\mathcal{D}_{M}})$

\begin{corollary}
\label{CorMainBdd}The process $(\bar{Q}_{\lambda}/\lambda:\lambda>0)$
satisfies a large deviations principle on $(L_{+,\infty}(\mathcal{D}_{M})$,
$\left\vert \left\vert \cdot\right\vert \right\vert _{\mathcal{D}_{M}})$ with
rate function $I\left(  \cdot\right)  $ defined in \eqref{rate}.
\end{corollary}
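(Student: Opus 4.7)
The proof is short because Theorem \ref{ThmBdd1} already supplies the LDP for the continuous interpolant $\tilde{Q}_{\lambda}/\lambda$ in $C_{+}(\mathcal{D}_{M})$ under the uniform topology, and the bound (\ref{Bnd1}) is essentially the only extra input needed. The plan is therefore: (a)~view both processes inside the common ambient space $L_{+,\infty}(\mathcal{D}_{M})$, (b)~promote the LDP for $\tilde{Q}_{\lambda}/\lambda$ from $C_{+}(\mathcal{D}_{M})$ to $L_{+,\infty}(\mathcal{D}_{M})$, and (c)~transfer the LDP to $\bar{Q}_{\lambda}/\lambda$ by exponential equivalence.

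For step (a), note that $C_{+}(\mathcal{D}_{M}) \subset L_{+,\infty}(\mathcal{D}_{M})$: any $x \in C_{+}(\mathcal{D}_{M})$ is bounded (continuous on the compact rectangle $\mathcal{D}_{M}$, and vanishing at infinity trivially since the domain is bounded in this truncated setting), satisfies $x(0,u)=0$, and one checks that the monotonicity condition $x(t,\cdot)$ non-increasing is inherited from $\tilde{Q}_{\lambda}(t,\cdot)$, which is itself inherited from $Q_{\lambda}^{\ast}(t,\cdot)$ as observed in Section \ref{SubSectAuxiliary}. The uniform norm $\|\cdot\|_{\mathcal{D}_{M}}$ defines the same topology on $C_{+}(\mathcal{D}_{M})$ whether it is regarded as a space in its own right or as a subspace of $L_{+,\infty}(\mathcal{D}_{M})$.

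For step (b), since $\tilde{Q}_{\lambda}/\lambda \in C_{+}(\mathcal{D}_{M})$ almost surely and the rate function $I$ is already infinite outside $AC_{+}(\mathcal{D}_{M}) \subset C_{+}(\mathcal{D}_{M})$, Lemma 4.1.5(b) of \cite{DEMZEI98} (exactly as invoked in the proof of Theorem \ref{ThmBdd1}) lifts the LDP to $(L_{+,\infty}(\mathcal{D}_{M}),\|\cdot\|_{\mathcal{D}_{M}})$ with the very same rate function $I$; goodness of the rate function is preserved because its compact level sets are unchanged.

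For step (c), observe from (\ref{Bnd1}) that for every $\delta>0$ and every $\lambda>4/\delta$,
\[
P\bigl(\|\tilde{Q}_{\lambda}/\lambda - \bar{Q}_{\lambda}/\lambda\|_{\mathcal{D}_{M}} > \delta\bigr) \le P(4/\lambda > \delta) = 0,
\]
so in particular $\lambda^{-1}\log P(\|\tilde{Q}_{\lambda}/\lambda - \bar{Q}_{\lambda}/\lambda\|_{\mathcal{D}_{M}} > \delta) \to -\infty$, i.e.\ the two families are exponentially equivalent in $(L_{+,\infty}(\mathcal{D}_{M}),\|\cdot\|_{\mathcal{D}_{M}})$. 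Theorem 4.2.13 of \cite{DEMZEI98} then yields the desired LDP for $\bar{Q}_{\lambda}/\lambda$ with rate function $I(\cdot)$. There is no substantive obstacle here; the only mild subtlety is making sure that the ambient space and topology used for exponential equivalence accommodate both the continuous $\tilde{Q}_{\lambda}$ and the non-continuous (piecewise-constant) $\bar{Q}_{\lambda}$, which is exactly the reason for passing through $L_{+,\infty}(\mathcal{D}_{M})$ rather than working in $C_{+}(\mathcal{D}_{M})$ alone.
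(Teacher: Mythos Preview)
Your proposal is correct and follows essentially the same route as the paper: both arguments reduce the corollary to Theorem \ref{ThmBdd1} by invoking the deterministic bound (\ref{Bnd1}) to conclude that $\tilde{Q}_{\lambda}/\lambda$ and $\bar{Q}_{\lambda}/\lambda$ are exponentially equivalent, and then apply Theorem 4.2.13 of \cite{DEMZEI98}. The paper's version is terser---it does not spell out the ambient-space embedding you discuss in steps (a)--(b), but it does note the measurability of $\{\|\bar{Q}_{\lambda}/\lambda-\tilde{Q}_{\lambda}/\lambda\|_{\mathcal{D}_{M}}>\eta\}$ via separability, a point you leave implicit; one small imprecision in your write-up is the blanket claim $C_{+}(\mathcal{D}_{M})\subset L_{+,\infty}(\mathcal{D}_{M})$ (the definition of $C_{+}$ does not include the monotonicity requirement), but what you actually need and justify---that the sample paths of $\tilde{Q}_{\lambda}/\lambda$ lie in both spaces---is correct.
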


\begin{proof}
First we verify that $Q_{\lambda}/\lambda$ and $\tilde{Q}_{\lambda}/\lambda$
are exponentially equivalent according to Definition 4.2.10 in \cite{DEMZEI98}%
). Since the laws of $(Q_{\lambda}/\lambda,\tilde{Q}_{\lambda}/\lambda)$ are
induced by a separable stochastic process and the underlying topology is
induced by the uniform norm, the set%
\[
\{\omega:||Q_{\lambda}/\lambda-\tilde{Q}_{\lambda}/\lambda||_{\mathcal{D}_{M}%
}>\eta\}
\]
is Borel measurable (see Remark b) following Definition 4.2.10 in
\cite{DEMZEI98}). Now recall that by the construction of $\tilde{Q}_{\lambda}$
that $\Vert Q_{\lambda}-\tilde{Q}_{\lambda}\Vert\leq4$ a.s. Hence for any
$\eta>0$,
\[
P(||Q_{\lambda}/\lambda-\tilde{Q}_{\lambda}/\lambda||_{\mathcal{D}_{M}}%
>\eta)=0
\]
for large enough $\lambda$. Hence
\[
\limsup_{\lambda\rightarrow\infty}\frac{1}{\lambda}\log P(||Q_{\lambda
}/\lambda-\tilde{Q}_{\lambda}/\lambda||_{\mathcal{D}_{M}}>\eta)=-\infty.
\]
The result then follows by applying Theorem 4.2.13 in \cite{DEMZEI98}.
\end{proof}

\subsection{Proofs of Technical Results\label{SectionAppendix1}}

Finally, we provide the proof of Lemmas \ref{projection limit} and
\ref{tightness}.

We start with Lemma \ref{projection limit} which takes advantage of the
Dawson-Gartner projective limit theorem and thus requires that we obtain an
auxiliary large deviations principle for finite dimensional objects defined
via
\begin{align}
\Delta_{ij}\left(  \lambda\right)   &  =\sum_{k=N_{\lambda}(t_{i-1}%
)+1}^{N_{\lambda}(t_{i})}I(y_{j-1}<A_{k}/\lambda+V_{k}\leq y_{j}%
)\label{DLTA}\\
&  =\bar{Q}_{\lambda}(t_{i},y_{j-1})-\bar{Q}_{\lambda}(t_{i},y_{j})-\bar
{Q}_{\lambda}(t_{i-1},y_{j-1})+\bar{Q}_{\lambda}(t_{i-1},y_{j}),\nonumber
\end{align}
for $t_{i-1}<t_{i}$, and $y_{j-1}<y_{j}$.

\begin{lemma}
\label{LemAuxLDPFinite}For $0=t_{0}<t_{1}<t_{2}<...<t_{m}\leq T$ and
$0=y_{0}<y_{1}<...<y_{n}<y_{n+1}=T+M$, $(\Delta_{ij}(\lambda)/\lambda:1\leq
i\leq m,1\leq j\leq n+1)$ possesses a large deviations principle with a good
rate function
\[
\sup_{\theta_{ij}:1\leq i\leq m,1\leq j\leq n+1}\sum_{i=1}^{m}\sum_{j=1}%
^{n+1}\theta_{ij}\delta_{ij}-\sum_{i=1}^{m}\int_{t_{i-1}}^{t_{i}}\psi
_{N}\left(  \log\sum_{j=1}^{n+1}e^{\theta_{ij}}P(y_{j-1}-u<V_{1}\leq
y_{j}-u)\right)  du.
\]

\end{lemma}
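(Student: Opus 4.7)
The plan is to apply the Gartner--Ellis theorem (Theorem 2.3.6 of Dembo--Zeitouni) to the finite-dimensional vector $(\Delta_{ij}(\lambda)/\lambda : 1 \le i \le m,\ 1 \le j \le n+1)$. One must show that the normalized log-moment generating function
\[
\Lambda_{\lambda}(\theta) := \frac{1}{\lambda} \log E \exp\Big(\sum_{i,j} \theta_{ij}\, \Delta_{ij}(\lambda)\Big)
\]
converges, as $\lambda \to \infty$, to
\[
\Lambda(\theta) := \sum_{i=1}^{m} \int_{t_{i-1}}^{t_{i}} \psi_{N}\bigl(\log g_{i}(u,\theta)\bigr)\, du, \qquad g_{i}(u,\theta) := \sum_{j=1}^{n+1} e^{\theta_{ij}}\, P(y_{j-1}-u < V_{1} \le y_{j}-u),
\]
and that $\Lambda$ is essentially smooth. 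The target rate function stated in the lemma is then precisely the Fenchel--Legendre transform $\Lambda^{*}$.

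First I would condition on the arrival process $N_{\lambda}$. Given the scaled epochs $(A_{k}/\lambda)_{k \ge 1}$, the service requirements $V_{k}$ are i.i.d.\ and independent of the arrivals; moreover, under the bounded-service assumption of Section~\ref{SectionBdd}, we have $y_{0} = 0 < V_{k} + A_{k}/\lambda \le T + M = y_{n+1}$ almost surely, so for each arrival $k$ falling in $(t_{i-1}, t_{i}]$ exactly one of the indicators defining $\Delta_{ij}(\lambda)$ equals one. Consequently
\[
E\Big[\exp\Big(\sum_{i,j} \theta_{ij}\, \Delta_{ij}(\lambda)\Big) \,\Big|\, N_{\lambda}\Big] = \prod_{i=1}^{m} \prod_{k\,:\,A_{k}/\lambda \in (t_{i-1}, t_{i}]} g_{i}\bigl(A_{k}/\lambda, \theta\bigr),
\]
and taking the outer expectation reduces the task to computing the limit of the renewal log-MGF
\[
\Lambda_{\lambda}(\theta) = \frac{1}{\lambda} \log E \exp\Big(\int_{0}^{T} h(u, \theta)\, dN_{\lambda}(u)\Big),
\]
where $h(u, \theta) := \log g_{i}(u, \theta)$ for $u \in (t_{i-1}, t_{i}]$. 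Since each $g_{i}(\cdot, \theta)$ is positive, continuous in $u$, and bounded on $[t_{i-1}, t_{i}]$, the integrand $h(\cdot, \theta)$ is bounded and piecewise continuous.

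The limit $\Lambda_{\lambda} \to \Lambda$ then follows by a Riemann-sum approximation: partition each $[t_{i-1}, t_{i}]$ into subintervals of mesh $\delta$ on which $h(\cdot, \theta)$ varies by less than $\varepsilon$, and invoke on each subinterval the infinitesimal renewal identity
\[
\lim_{\lambda \to \infty} \frac{1}{\lambda} \log E\, e^{c[N_{\lambda}(s+\delta) - N_{\lambda}(s)]} = \psi_{N}(c)\, \delta
\]
recalled in Section~\ref{SubsectionAssumptions}. Because renewal increments on adjacent subintervals are not independent, one uses H\"older's inequality to decouple them, as in Glynn (1995), producing matching upper and lower bounds that sandwich $\Lambda(\theta)$ up to an error vanishing as $\varepsilon, \delta \downarrow 0$. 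The continuous differentiability of $\psi_{N}$ throughout $\mathbb{R}$ assumed in Section~\ref{SubsectionAssumptions}, combined with the smooth positive dependence of $g_{i}$ on $\theta$, shows that $\Lambda$ is finite and $C^{1}$ on all of $\mathbb{R}^{m(n+1)}$, hence essentially smooth; Gartner--Ellis then delivers the announced LDP. The main obstacle is the H\"older decoupling step, which is precisely what makes the renewal case strictly harder than the Poisson heuristic in Section~\ref{SectionGuessing}; controlling the errors uniformly in the mesh requires combining the infinitesimal identity above with the boundedness of $h$.
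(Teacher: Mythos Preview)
Your proposal is correct and follows essentially the same Gartner--Ellis route as the paper: condition on the arrival process to reduce the log-MGF to a renewal functional, pass to the limit via the infinitesimal renewal identity (the paper simply cites Glynn (1995) here, whereas you sketch the underlying Riemann/H\"older decoupling), verify that $\Lambda$ is finite and $C^{1}$ on all of $\mathbb{R}^{m(n+1)}$, and invoke Gartner--Ellis. The only cosmetic difference is that the paper establishes goodness of the rate function by a separate exponential-tightness argument on $N_{\lambda}(T)$, which is in fact redundant once $\Lambda$ is finite in a neighborhood of the origin.
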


\begin{proof}
[Proof of Lemma \ref{LemAuxLDPFinite}]We use that $\psi_{N}(\cdot)$ is
continuously differentiable over $\mathbb{R}$. Since $U_{i}$ are non-lattice,
the key renewal theorem implies that for any set of $0\leq t_{0}<t_{1}%
<t_{2}<\cdots<t_{m}$,
\[
\lim_{\lambda\rightarrow\infty}\frac{1}{\lambda}\log E\exp\left\{  \sum
_{i=1}^{m}\theta_{i}(N_{\lambda}(t_{i})-N_{\lambda}(t_{i-1}))\right\}
=\sum_{i=1}^{m}\psi_{N}(\theta)(t_{i}-t_{i-1})
\]
for any $\theta\in\mathbb{R}$. Then, from \cite{Glynn95a}, the Gartner-Ellis
limit of $(\Delta_{ij}(\lambda):1\leq i\leq m,1\leq j\leq n+1)$ satisfies
\begin{align*}
\Lambda(\Theta)  &  =\lim_{\lambda\rightarrow\infty}\frac{1}{\lambda}\log
E\exp\left\{  \sum_{i=1}^{m}\sum_{j=1}^{n+1}\theta_{ij}\Delta_{ij}%
(\lambda)\right\} \\
&  =\sum_{i=1}^{m}\int_{t_{i-1}}^{t_{i}}\psi_{N}\left(  \log\sum_{j=1}%
^{n+1}e^{\theta_{ij}}P(y_{j-1}-u<V_{1}\leq y_{j}-u)\right)  du
\end{align*}
is finite for any $\Theta:=(\theta_{i,j}:1\leq i\leq m,1\leq j\leq n+1)$.
Moreover, for any $t_{i-1}<u\leq t_{i}$,
\begin{align*}
&  \left\vert \frac{\partial}{\partial\theta_{ij}}\psi_{N}\left(  \log
\sum_{k=1}^{n+1}e^{\theta_{ik}}P(y_{k-1}-u<V_{1}\leq y_{k}-u)\right)
\right\vert \\
=  &  \left\vert \psi_{N}^{\prime}\left(  \log\sum_{k=1}^{n+1}e^{\theta_{ik}%
}P(y_{k-1}-u<V_{1}\leq y_{k}-u)\right)  \right\vert \cdot\frac{e^{\theta_{ij}%
}P(y_{j-1}-u<V_{1}\leq y_{j}-u)}{\sum_{k=1}^{n+1}e^{\theta_{ik}}%
P(y_{k-1}-u<V_{1}\leq y_{k}-u)}\\
\leq &  \max\{~|\psi_{N}^{\prime}(\max\{\theta_{ik},k=1,\ldots,n+1\})|~,~|\psi
_{N}^{\prime}(\min\{\theta_{ik},k=1,\ldots,n+1\})|~\}
\end{align*}
which is uniformly bounded over a neighborhood of $\theta_{ij}$ and
$t_{i-1}<u\leq t_{i}$, fixing all other $\theta_{lk}$'s. Therefore,
\begin{align*}
&  \frac{1}{h}\Bigg|\psi_{N}\left(  \log\left(  e^{\theta_{ij}+h}%
P(y_{j-1}-u<V_{1}\leq y_{j}-u)+\sum_{k\neq j}e^{\theta_{ik}}P(y_{k-1}%
-u<V_{1}\leq y_{k}-u)\right)  \right)  {}\\
&  ~~~~ {}-\psi_{N}\left(  \log\sum_{k=1}^{n+1}e^{\theta_{ik}}P(y_{k-1}%
-u<V_{1}\leq y_{k}-u)\right)  \Bigg|
\end{align*}
is also uniformly bounded one the same region. By dominated convergence
theorem, we have
\begin{align*}
&  ~~\frac{\partial}{\partial\theta_{ij}}\Lambda(\Theta)\\
=  &  \int_{t_{i-1}}^{t_{i}}\psi_{N}^{\prime}\left(  \log\sum_{k=1}%
^{n+1}e^{\theta_{ik}}P(y_{k-1}-u<V_{1}\leq y_{k}-u)\right)  \cdot
\frac{e^{\theta_{ij}}P(y_{j-1}-u<V_{1}\leq y_{j}-u)}{\sum_{k=1}^{n_{i}%
}e^{\theta_{ik}}P(y_{k-1}-u<V_{1}\leq y_{k}-u)}du
\end{align*}
Moreover, it is dominated by
\[
(t_{i}-t_{i-1})\max\{|\psi_{N}^{\prime}(\max\{\theta_{ik},k=1,\ldots
,n+1\})|,|\psi_{N}^{\prime}(\min\{\theta_{ik},k=1,\ldots,n+1\})|\}<\infty
\]
for any given $\Theta\in\mathbb{R}^{d}$. Since $\Lambda(\cdot)$ is finite and
differentiable everywhere on $\mathbb{R}^{d}$, by the Gartner-Ellis Theorem
for the case $\mathcal{D}_{\Lambda}=\mathbb{R}^{d}$ (\cite{DEMZEI98}, p. 52,
Ex 2.3.20 (g)), $\{\Delta_{ij}(\lambda)\}$ possesses a rate function
\[
\sup_{\theta_{ij}:1\leq i\leq m,1\leq j\leq n+1}\sum_{i=1}^{m}\sum_{j=1}%
^{n+1}\theta_{ij}\delta_{ij}-\sum_{i=1}^{m}\int_{t_{i-1}}^{t_{i}}\psi
_{N}\left(  \log\sum_{j=1}^{n+1}e^{\theta_{ij}}P(y_{j-1}-u<V_{1}\leq
y_{j}-u)\right)  du.
\]
We argue that the rate function is good. By \cite{DEMZEI98} p. 8, Lemma
1.2.18, it suffices to show that $(\Delta_{ij}(\lambda):1\leq i\leq m,1\leq
j\leq n+1)$ is exponentially tight. Denoting $\Vert\cdot\Vert_{1}$ as the
$L_{1}$-norm, we have by Chernoff's bound
\[
\overline{\lim}_{\lambda\rightarrow\infty}\frac{1}{\lambda}\log P\left(
\left\Vert \Delta_{ij}(\lambda)/\lambda\right\Vert _{1}>\alpha\right)
\leq\overline{\lim}_{\lambda\rightarrow\infty}\frac{1}{\lambda}\log
P(N_{\lambda}(T)>\alpha\lambda)\leq-\theta\alpha+\psi_{N}(\theta),
\]
for any $\theta>0$. Sending $\alpha\rightarrow\infty$ we then obtain
\[
\overline{\lim}_{\alpha\rightarrow\infty}\overline{\lim}_{\lambda
\rightarrow\infty}\frac{1}{\lambda}\log P\left(  \left\Vert \Delta
_{ij}(\lambda)/\lambda\right\Vert _{1}>\alpha\right)  =-\infty,
\]
thereby obtaining exponential tightness and the goodness of the underlying
rate function as claimed.
\end{proof}

\begin{proof}
[Proof of Lemma \ref{projection limit}]We will use the Dawson-Gartner
projective limit theorem. Consider a collection of points in the plane of the
form $\kappa=\left(  \left(  t_{i},y_{j}\right)  :1\leq i\leq m,0\leq j\leq
n\right)  $, such that $0:=t_{0}<t_{1}<t_{2}<...<t_{m}\leq T$ and
$0:=y_{0}<y_{1}<...<y_{n}$. Moreover, we assume that $y_{l}=t_{l}$ if $0\leq
l\leq\min\left(  m,n\right)  $. Let $K$ be the union of such collection of
sets $\kappa$. Further, let $\{p_{\kappa}\}_{\kappa\in K}$ be the projective
system generated by $K$. We will proceed to obtain a large deviations
principle for the projections $\left(  \bar{Q}_{\lambda}(t,y)/\lambda:\left(
t,y\right)  \in\kappa\right)  $. However, we will do this by first obtaining a
large deviations principle for quantities $\Delta_{ij}\left(  \lambda\right)
/\lambda$ and then the large deviations principle for the projections follows
using the contraction principle as the $\left(  \bar{Q}_{\lambda}%
(t,y)/\lambda:\left(  t,y\right)  \in\kappa\right)  $ will be shown to be
continuous functions. Set $y_{n+1}=\infty$, so that $\bar{Q}_{\lambda
}(t,y_{n+1})=0$ for every $t\in\lbrack0,T]$. It is important to note, given
the structure of the partition $\kappa$, that if $1\leq i\leq m$, $1\leq j\leq
n$, and $i>j$, then $\Delta_{ij}\left(  \lambda\right)  =0$. Now, similar to
the definition of $\Delta_{ij}\left(  \lambda\right)  $ we define, for $1\leq
i\leq m$ and $1\leq j\leq n+1$,
\[
\widetilde{\Delta}_{ij}\left(  \lambda\right)  =\tilde{Q}_{\lambda}%
(t_{i},y_{j-1})-\tilde{Q}_{\lambda}(t_{i},y_{j})-\tilde{Q}_{\lambda}%
(t_{i-1},y_{j-1})+\tilde{Q}_{\lambda}(t_{i-1},y_{j}).
\]
Once again, observe that $\tilde{Q}_{\lambda}(t,y_{n+1})=0$, and also if
$i>j$, for $1\leq i\leq m$, $1\leq j\leq n$, we have $t_{i-1}\geq y_{j}$ and
therefore%
\begin{align*}
\widetilde{\Delta}_{ij}\left(  \lambda\right)   &  =\tilde{Q}_{\lambda
}(y_{j-1},y_{j-1})+\tilde{N}_{\lambda}\left(  t_{i}\right)  -\tilde
{N}_{\lambda}\left(  y_{j-1}\right)  -(\tilde{Q}_{\lambda}(y_{j},y_{j}%
)+\tilde{N}_{\lambda}\left(  t_{i}\right)  -\tilde{N}_{\lambda}\left(
y_{j}\right)  )\\
&  -(\tilde{Q}_{\lambda}(y_{j-1},y_{j-1})+\tilde{N}_{\lambda}\left(
t_{i-1}\right)  -\tilde{N}_{\lambda}\left(  y_{j-1}\right)  )+(\tilde
{Q}_{\lambda}(y_{j},y_{j})+\tilde{N}_{\lambda}\left(  t_{i-1}\right)
-\tilde{N}_{\lambda}\left(  y_{j}\right)  )\\
&  =0.
\end{align*}
Moreover, clearly we have for $1\leq i\leq m,$ and $1\leq j\leq n$
\[
\tilde{Q}_{\lambda}(t_{i},y_{j})=\sum_{l=1}^{i}\sum_{r=j+1}^{n+1}%
\widetilde{\Delta}_{lr}\left(  \lambda\right)  ,
\]
so indeed we have that $(\tilde{Q}_{\lambda}(t_{i},y_{j}):1\leq i\leq m,1\leq
j\leq n+1)$ can be recovered as a continuous function of the $\widetilde
{\Delta}_{lr}\left(  \lambda\right)  $'s. Since $\left\vert \widetilde{\Delta
}_{ij}\left(  \lambda\right)  -\Delta_{ij}\left(  \lambda\right)  \right\vert
\leq16$, we have that
\[
\lim_{\lambda\rightarrow\infty}\frac{1}{\lambda}\log E\exp\left\{  \sum
_{i=1}^{m}\sum_{j=i}^{n+1}\theta_{ij}\Delta_{ij}\left(  \lambda\right)
\right\}  =\lim_{\lambda\rightarrow\infty}\frac{1}{\lambda}\log E\exp\left\{
\sum_{i=1}^{m}\sum_{j=i}^{n+1}\theta_{ij}\widetilde{\Delta}_{ij}\left(
\lambda\right)  \right\}  .
\]
Consequently, from Lemma \ref{LemAuxLDPFinite}, the rate function for the
projections represented by $\kappa$ (these projections are denoted by
$p_{\kappa}(\bar{q})$) can be written as
\begin{align*}
&  I(p_{\kappa}(\bar{q}))\\
&  =\sup_{\{\theta_{ij}:1\leq\imath\leq m,1\leq j\leq n+1\}}\sum_{i=1}^{m}%
\sum_{j=1}^{n}\theta_{ij}\delta_{ij}\left(  \kappa\right)  -\sum_{i=1}^{m}%
\int_{t_{i-1}}^{t_{i}}\psi_{N}\left(  \log\sum_{j=i}^{n+1}e^{\theta_{ij}%
}P(y_{j-1}-u<V_{1}\leq y_{j}-u)\right)  du
\end{align*}
To possess a finite $I(p_{\kappa}(\bar{q}))$, the quantity $\delta_{ij}\left(
\kappa\right)  :=\bar{q}(t_{i},y_{j-1})-\bar{q}(t_{i},y_{j})-\bar{q}%
(t_{i-1},y_{j-1})+\bar{q}(t_{i-1},y_{j})$ must satisfy that
\begin{equation}
\delta_{ij}\left(  \kappa\right)  =0 \label{CondAC_LT}%
\end{equation}
for $i>j$, and $1\leq i\leq m$, $1\leq j\leq n+1$; otherwise, if $\delta
_{ij}\left(  \kappa\right)  \neq0$, the rate function can be made arbitrarily
large by picking $\theta_{ij}=c\times sgn(\delta_{ij}\left(  \kappa\right)  )$
with arbitrarily large constant $c>0$ for $1\leq j<i\leq m$, as
\[
\int_{t_{i-1}}^{t_{i}}\psi_{N}\left(  \log\sum_{j=i}^{n+1}e^{\theta_{ij}%
}P(y_{j-1}-u<V_{1}\leq y_{j}-u)\right)  du
\]
is independent of $\theta_{ij}$'s that have $j<i$. In the representation of
the rate function $I(p_{\kappa}(\bar{q}))$ we have also used the fact that
$\bar{q}(t_{i},y_{j})=\sum_{l\leq i,r>j}\delta_{lr}\left(  \kappa\right)  $,
with $\bar{q}(0,y_{j})=0$, so the relation from the $\delta_{ij}\left(
\kappa\right)  $'s to the $\bar{q}(t_{i},y_{j})$ is a one-to-one, continuous
function, so that the contraction principle (Theorem 4.2.1, \cite{DEMZEI98})
is invoked for the above representation for $I(p_{\kappa}(\bar{q}))$. We want
to show that $\sup_{\kappa\in K}I(p_{\kappa}(q))$ is equal to \eqref{rate},
and hence conclude the proof by Dawson-Gartner Theorem (see Theorem 4.6.1,
\cite{DEMZEI98}). Clearly it suffices to concentrate on functions $\bar{q}$
such that $\bar{q}(t,y)=0$ whenever $t>T$ or $y>t+M$ given that we are
assuming service times bounded by $M$. Note that the constraint
(\ref{CondAC_LT}) implies that for any $\bar{q}$, in order that $I(\bar
{q})<\infty$, we must have absolute continuity throughout $0\leq y\leq t\leq
T$ and, moreover, that%
\[
\partial\bar{q}(t,y)/\partial y\partial t=0
\]
almost everywhere on $0\leq y\leq t\leq T$ (see \cite{DEMZEI98} p. 189). We
now focus on $\bar{q}(t,y)$ that is absolutely continuous on $[0,T]\times
[0,T+M]$ and have $\partial\bar{q}(t,y)/\partial y\partial t=0$ almost
everywhere on $0\leq y\leq t\leq T$. Observe that
\begin{align*}
\delta_{ij}\left(  \kappa\right)   &  =\bar{q}(t_{i},y_{j-1})-\bar{q}%
(t_{i},y_{j})-\bar{q}(t_{i-1},y_{j-1})+\bar{q}(t_{i-1},y_{j})\\
&  =-\int_{t_{i-1}}^{t_{i}}\int_{y_{j-1}}^{y_{j}}\frac{\partial^{2}}{\partial
t\partial y}\bar{q}(t,y)dydt.
\end{align*}
Regarding $\theta(\cdot,\cdot)$ as a step function with jumps at
$0=t_{1}<t_{2}<\cdots<t_{m}\leq T$ and $0\leq y_{0}<y_{1}<\cdots<y_{n}%
<y_{n+1}=T+M$, and denote $\mathcal{S}\left(  D\right)  $ as the set of all
step functions on a given domain $D$. We can write
\begin{align}
&  \sup_{\kappa}I(p_{\kappa}(\bar{q}))\nonumber\\
&  =\sup_{\theta(\cdot,\cdot)\in\mathcal{S}[0,T]\times\lbrack0,T+M]}\left\{
\int_{0}^{T}\int_{t}^{t+M}\theta(t,y)\left(  -\frac{\partial^{2}}{\partial
y\partial t}\bar{q}(t,y)\right)  dydt-\int_{0}^{T}\psi_{N}\left(  \log\int
_{t}^{t+M}e^{\theta(t,y)}dF\left(  y-t\right)  \right)  dt\right\}
\label{expression}%
\end{align}
%&\leq&\left\{\int_0^T\sup_{\theta(t,\cdot)\in\mathcal{R}[t,t+M]}\int_t^{t+M}\theta(t,y)\left(-\frac{\partial^2}{\partial y\partial t}\bar{q}(t,y)\right)dydt-\int_0^T\psi_N\left(\log\int_t^{t+M}e^{\theta(t,y)}f(y-t)dy\right)dt\right\} \notag\\
%&=&\left\{\int_0^T\sup_{\theta(t,\cdot)\in\mathcal{R}[0,M]}\int_0^{t}\theta(t,y)\left(-\frac{\partial^2}{\partial y\partial t}\bar{q}(t,y+t)\right)dydt-\int_0^T\psi_N\left(\log\int_0^{M}e^{\theta(t,y)}f(y)dy\right)dt\right\} \notag\\
%\end{eqnarray}
To show that $\sup_{\kappa}I(p_{\kappa}(\bar{q}))\geq I(\bar{q})$ where
$I(\bar{q})$ is as defined in \eqref{rate}, note first that the set of step
functions $\mathcal{S}\left(  [0,T]\times\lbrack0,T+M]\right)  $ is dense in
$\mathcal{C}\left(  [0,T]\times\lbrack0,T+M]\right)  $, the set of continuous
functions equipped with the uniform metric. So for any continuous function
$\theta(\cdot,\cdot)\in\mathcal{C}\left(  [0,T]\times\lbrack0,T+M]\right)  $,
we can find a sequence $\theta_{k}(\cdot,\cdot)\in\mathcal{S}\left(
[0,T]\times\lbrack0,T+M]\right)  $ with $\Vert\theta_{k}-\theta\Vert
_{\lbrack0,T]\times\lbrack0,T+M]}\rightarrow0$. Note that since $\theta$ is
continuous, it is bounded and so $\theta_{k}$ is also uniformly bounded i.e.
$|\theta_{k}(t,y)|\leq C$ for all $k$ and some $C>0$. Consider
\[
\int_{0}^{T}\int_{t}^{t+M}\theta(t,y)\left(  -\frac{\partial^{2}}{\partial
y\partial t}\bar{q}(t,y)\right)  dydt-\int_{0}^{T}\psi_{N}\left(  \log\int
_{t}^{t+M}e^{\theta(t,y)}dF\left(  y-t\right)  \right)  dt
\]
with $\theta\in\mathcal{C}\left(  [0,T]\times\lbrack0,T+M]\right)  $. We want
to show that this can be approximated by the counterpart in $\theta_{k}%
\in\mathcal{S}\left(  [0,T]\times\lbrack0,T+M]\right)  $. Note that
\[
\int_{0}^{T}\int_{t}^{t+M}\left\vert \theta_{k}(t,y)\left(  -\frac
{\partial^{2}}{\partial y\partial t}\bar{q}(t,y)\right)  \right\vert dydt\leq
C\int_{0}^{T}\int_{t}^{t+M}\left\vert \frac{\partial^{2}}{\partial y\partial
t}\bar{q}(t,y)\right\vert dydt<\infty
\]
since $\bar{q}$ is absolutely continuous. By dominated convergence we have
\begin{equation}
\int_{0}^{T}\int_{t}^{t+M}\theta_{k}(t,y)\left(  -\frac{\partial^{2}}{\partial
y\partial t}\bar{q}(t,y)\right)  dydt\rightarrow\int_{0}^{T}\int_{t}%
^{t+M}\theta(t,y)\left(  -\frac{\partial^{2}}{\partial y\partial t}\bar
{q}(t,y)\right)  dydt \label{convergence1}%
\end{equation}
Similarly, since, as mentioned earlier $|\theta_{k}(t,y)|\leq C$, by the
bounded convergence theorem we have
\[
\int_{t}^{t+M}e^{\theta_{k}(t,y)}dF\left(  y-t\right)  \rightarrow\int
_{t}^{t+M}e^{\theta_{k}(t,y)}dF\left(  y-t\right)
\]
and so by the continuity of $\psi_{N}(\log\left(  \cdot\right)  )$ we get
\[
\psi_{N}\left(  \log\int_{t}^{t+M}e^{\theta_{k}(t,y)}dF\left(  y-t\right)
\right)  \rightarrow\psi_{N}\left(  \log\int_{t}^{t+M}e^{\theta(t,y)}dF\left(
y-t\right)  \right)
\]
for any $t$. Furthermore, the obvious inequality
\begin{equation}
e^{-C}=e^{-C}\int_{0}^{M}dF(y)\leq\int_{0}^{M}e^{\theta_{k}(t,y)}dF(y)\leq
e^{C}\int_{0}^{M}dF(y)=e^{C}, \label{bdd dominated convergence}%
\end{equation}
yields%
\[
\left\vert \psi_{N}\left(  \log\int_{t}^{t+M}e^{\theta_{k}(t,y)}%
f(y-t)dy\right)  \right\vert \leq\sup_{\xi\in\lbrack-C,C]}|\psi_{N}(\xi)|
\]
Hence yet another application of dominated convergence gives
\begin{equation}
\int_{0}^{T}\psi_{N}\left(  \log\int_{t}^{t+M}e^{\theta_{k}(t,y)}%
dF(y-t)\right)  dt\rightarrow\int_{0}^{T}\psi_{N}\left(  \log\int_{t}%
^{t+M}e^{\theta(t,y)}dF(y-t)\right)  dt \label{convergence2}%
\end{equation}
Combining \eqref{convergence1} and \eqref{convergence2} and using the
expression in \eqref{expression}, we conclude that $\sup_{\kappa}I(p_{\kappa
}(\bar{q}))\geq I(\bar{q})$ (note a shift of variable $y$ in \eqref{rate}).
For the other direction, consider
\[
\int_{0}^{T}\int_{t}^{t+M}\theta(t,y)\left(  -\frac{\partial^{2}}{\partial
t\partial y}\bar{q}(t,y)\right)  dydt-\int_{0}^{T}\psi_{N}\left(  \log\int
_{0}^{M}e^{\theta(t,y)}dF(y)\right)  dt
\]
now with $\theta\in\mathcal{S}\left(  [0,T]\times\lbrack0,T+M]\right)  $. Note
that we can find a sequence $\theta_{k}\in\mathcal{C}\left(  [0,T]\times
\lbrack0,T+M]\right)  $ such that $\theta_{k}\rightarrow\theta$ pointwise
almost everywhere and that $\theta_{k}$ is uniformly bounded; this sequence
can be found, for example, by convolving $\theta$ with a sequence of
mollifiers (i.e. smooth kernels with bandwidth that tends to zero as
$k\rightarrow\infty$). Exactly the same argument as above would then yield
$\sup_{\kappa}I(p_{\kappa}(\bar{q}))\leq I(\bar{q})$.
Now, let $\bar{q}\in C_{+}(\mathcal{D}_{M})$ and suppose that $\bar{q}$ is not
absolutely continuous. That is, it is not of bounded total variation in the
sense of \cite{DEMZEI98} p. 189. Then, for every $\gamma>0$ there exists
$t_{1}\left(  \gamma\right)  <~...~<t_{m}\left(  \gamma\right)  $ and
$y_{0}\left(  \gamma\right)  <...<y_{n}\left(  \gamma\right)  $ such that
$\sum_{i=1}^{m}\sum_{j=1}^{n}\left\vert \delta_{ij}^{\gamma}\right\vert
\geq\gamma$, where%
\[
\delta_{ij}^{\gamma}=\bar{q}(t_{i}\left(  \gamma\right)  ,y_{j-1}\left(
\gamma\right)  )-\bar{q}(t_{i}\left(  \gamma\right)  ,y_{j}\left(
\gamma\right)  )-\bar{q}(t_{i-1}\left(  \gamma\right)  ,y_{j-1}\left(
\gamma\right)  )+\bar{q}(t_{i-1}\left(  \gamma\right)  ,y_{j}\left(
\gamma\right)  ).
\]
Now observe that
\begin{align}
&  \sup_{\kappa\in K}I(p_{\kappa}(\bar{q}))\nonumber\\
&  =\sup_{\substack{\theta_{ij}:1\leq i\leq m,1\leq j\leq n\\\kappa\in K}%
}\sum_{i=1}^{m}\sum_{j=1}^{n}\theta_{ij}\delta_{ij}\left(  \kappa\right)
-\sum_{i=1}^{m}\int_{t_{i-1}}^{t_{i}}\psi_{N}\left(  \log\sum_{j=1}%
^{n+1}e^{\theta_{ij}}P(y_{j-1}-u<V_{1}\leq y_{j}-u)\right)  du.\nonumber
\end{align}
Following \cite{DEMZEI98} p. 192, we can select $\theta_{ij}=sgn\left(
\delta_{ij}^{\gamma}\right)  $ for the partition introduced earlier that
defines $\delta_{ij}^{\gamma}$, and obtain
\[
\sup_{\kappa\in K}I(p_{\kappa}(\bar{q}))\geq\sum_{i=1}^{m}\sum_{j=1}%
^{n}\left\vert \delta_{ij}^{\gamma}\right\vert -T\psi_{N}\left(  1\right)  .
\]
Since $\gamma>0$ is arbitrary we conclude that
\[
\sup_{\kappa\in K}I(p_{\kappa}(\bar{q}))=\infty
\]
as required.
%
%If $\tilde{q}$ is not absolutely continuous. There exists $\delta>0$, $\{s_1^n<t_1^n\leq~...~\leq s_{k_n}^n<t_{k_n}^n\}$ and $\{z_1^n<y_1^n\leq~...~\leq z_{m_n}^n<y_{m_n}^n\}$ such that $\sum_{i=1}^{k_n}(t_i^n-s_i^n)$ and $\sum_{j=1}^{m_n}(y_j^n-z_j^n)\rightarrow 0$, while $\sum_{i=1}^{k_n}\sum_{j=1}^{m_n}\Delta_{ij}\geq \delta$, where $\Delta_{ij}=|\tilde{q}(t_i^n,z_j^n+t_i^n)-\tilde{q}(t_i^n,y_j^n+t_i^n)-\tilde{q}(s_i^n,s_i^n+z_j^n)+\tilde{q}(s_i^n,s_i^n+y_j^n)|$ Then
%$$
%I(x)\geq \sum_{i=1}^{k_n}\sum_{j=1}^{m_n} \alpha \cdot \Delta_ij-\sum_{i=1}^{k_n}\int_{s_i^n}^{t_i^n}\psi_N\left(\log\sum_{j=1}^ne^{\alpha}P(z_j^n<V_i\leq y_j^n)\right)du
%$$
%Due to the existence of probability density of the service time $V_i$, for any fixed $\alpha$ we have
%$$
%\sum_{j=1}^ne^{\alpha}P(z_j^n<V_i\leq y_j^n)\rightarrow 0\text{ as }\sum_{j=1}^{m_n}(y_j^n-z_j^n)\rightarrow 0.
%$$
%Hence
%$$
%\sum_{i=1}^{k_n}\int_{s_i^n}^{t_i^n}\psi_N\left(\log\sum_{j=1}^ne^{\alpha}P(z_j^n<V_i\leq y_j^n)\right)du\rightarrow 0 \text{ as }n\rightarrow \infty
%$$
%Therefore
%$$
%I(q)\geq \sum_{i=1}^{k_n}\sum_{j=1}^{m_n} \alpha \cdot \Delta_ij-\sum_{i=1}^{k_n}\int_{s_i^n}^{t_i^n}\psi_N\left(\log\sum_{j=1}^ne^{\alpha}P(z_j^n<V_i\leq y_j^n)\right)du\geq \alpha\delta.
%$$
%As $\alpha$ can be abstractly large, we conclude that $I(q)=\infty=\lim_{k\to \infty}I(p_k(q))$ if $\tilde{q}$ is not absolutely continuous.
%Now since $p_k(Q/s)$ and $p_k(\tilde{Q}/s)$ are exponentially equivalent for any projection $p_k$ (by Lemma 1), the result also holds for $\tilde{Q}/s$.

\end{proof}

\begin{proof}
[Proof of Lemma \ref{tightness}]We want to prove that for any $\eta$,
\[
\lim_{\lambda\rightarrow\infty}\frac{1}{\lambda}\log P\left(  \left\Vert
\frac{\tilde{Q}_{\lambda}(0,0)}{\lambda}\right\Vert _{\mathcal{D}_{M}}%
>\eta\right)  =-\infty
\]
and
\[
\lim_{\delta\rightarrow0}\overline{\lim}_{\lambda\rightarrow\infty}\frac
{1}{\lambda}\log P\left(  w\left(  \frac{\tilde{Q}_{\lambda}}{\lambda}%
,\delta\right)  >\eta\right)  =-\infty
\]
where $w(\tilde{Q}_{\lambda}/\lambda,\delta)$ is the modulus of continuity of
$\tilde{Q}_{\lambda}/\lambda$ with order $\delta$ defined by
\[
w\left(  \tilde{Q}_{\lambda}/\lambda,\delta\right)  =\sup_{\substack{|t_{1}%
-t_{2}|<\delta\\|y_{1}-y_{2}|<\delta}}|\tilde{Q}_{\lambda}(t_{1}%
,y_{1})/\lambda-\tilde{Q}_{\lambda}(t_{2},y_{2})/\lambda|.
\]
Recall that $\Vert\tilde{Q}_{\lambda}-\bar{Q}_{\lambda}\Vert_{\mathcal{D}_{M}%
}\leq4$ a.s., and that $\bar{Q}_{\lambda}(t,y)=Q_{\lambda}(t,y-t)$ for $y>t$
and $\bar{Q}_{\lambda}(t,y)=\bar{Q}_{\lambda}(y,y)+N_{\lambda}\left(
t\right)  -N_{\lambda}\left(  y\right)  $ for $0\leq y\leq t\leq T$.
Therefore, it suffices to show that for any $\eta>0$,
\begin{equation}
\lim_{\lambda\rightarrow\infty}\frac{1}{\lambda}\log P\left(  \left\Vert
\frac{Q_{\lambda}(0,0)}{\lambda}\right\Vert >\eta\right)  =-\infty,
\label{boundedness}%
\end{equation}
(where $\Vert\cdot\Vert$ denotes the supremum norm over $[0,T]\times
\lbrack0,M]$), also
\begin{equation}
\lim_{\delta\rightarrow0}\overline{\lim}_{\lambda\rightarrow\infty}\frac
{1}{\lambda}\log P\left(  w\left(  \frac{Q_{\lambda}}{\lambda},\delta\right)
>\eta\right)  =-\infty, \label{equicontinutiy}%
\end{equation}
and finally that
\begin{equation}
\lim_{\delta\rightarrow0}\overline{\lim}_{\lambda\rightarrow\infty}\frac
{1}{\lambda}\log P\left(  \sup_{0\leq t_{2}-t_{1}<\delta}(N_{\lambda}%
(t_{2})/\lambda-N_{\lambda}(t_{1})/\lambda)>\eta\right)  =-\infty.
\label{LastAux}%
\end{equation}
By our assumption that the system is empty, \eqref{boundedness} is obvious.
Condition (\ref{LastAux}) will follow as a direct consequence of our analysis
of \eqref{equicontinutiy}. Now, to prove \eqref{equicontinutiy} consider
\[
P\left(  w\left(  \frac{Q_{\lambda}}{\lambda},\delta\right)  >\eta\right)
\leq\sum_{m=0}^{\lfloor T/\delta\rfloor}\sum_{n=0}^{\lfloor M/\delta\rfloor
}P\left(  \sup_{\substack{0<t_{1}-t_{2}<\delta,\ t_{1}\in(m\delta
,(m+1)\delta]\\|y_{2}-y_{1}|<\delta,\ y_{1}\in(n\delta,(n+1)\delta
]}}|Q_{\lambda}(t_{1},y_{1})-Q_{\lambda}(t_{2},y_{2})|>\lambda\eta\right)
\]
\begin{figure}[th]
\centering
\subfloat[][Representation of $Q(t,y)$]{
		\includegraphics[scale=0.3]{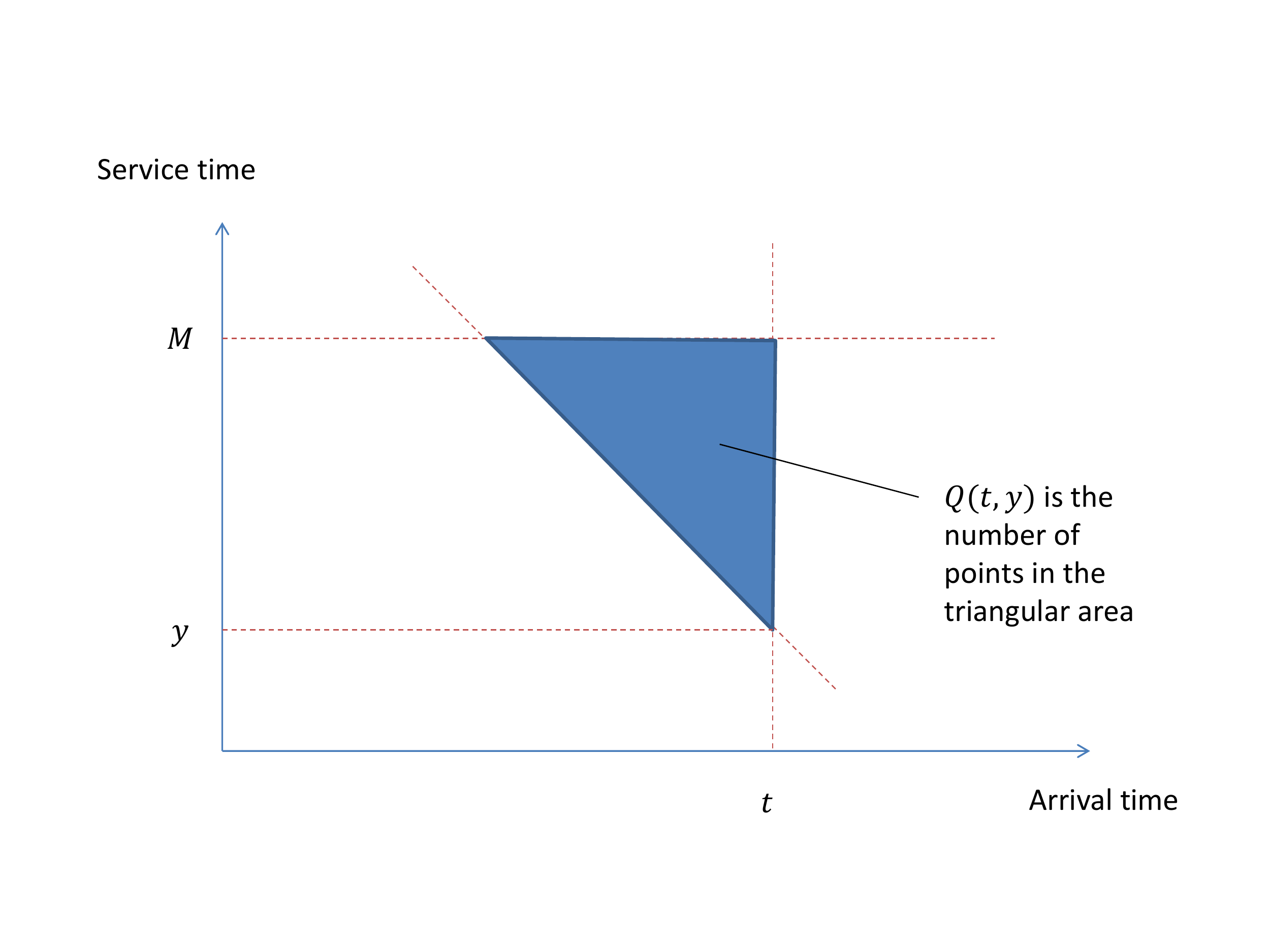}
		\label{triangle}
}
\subfloat[][Areas of $\mathcal{N}_1^\delta(m,n)$ and $\mathcal{N}_2^\delta(m,n)$]{
\includegraphics[scale=0.3]{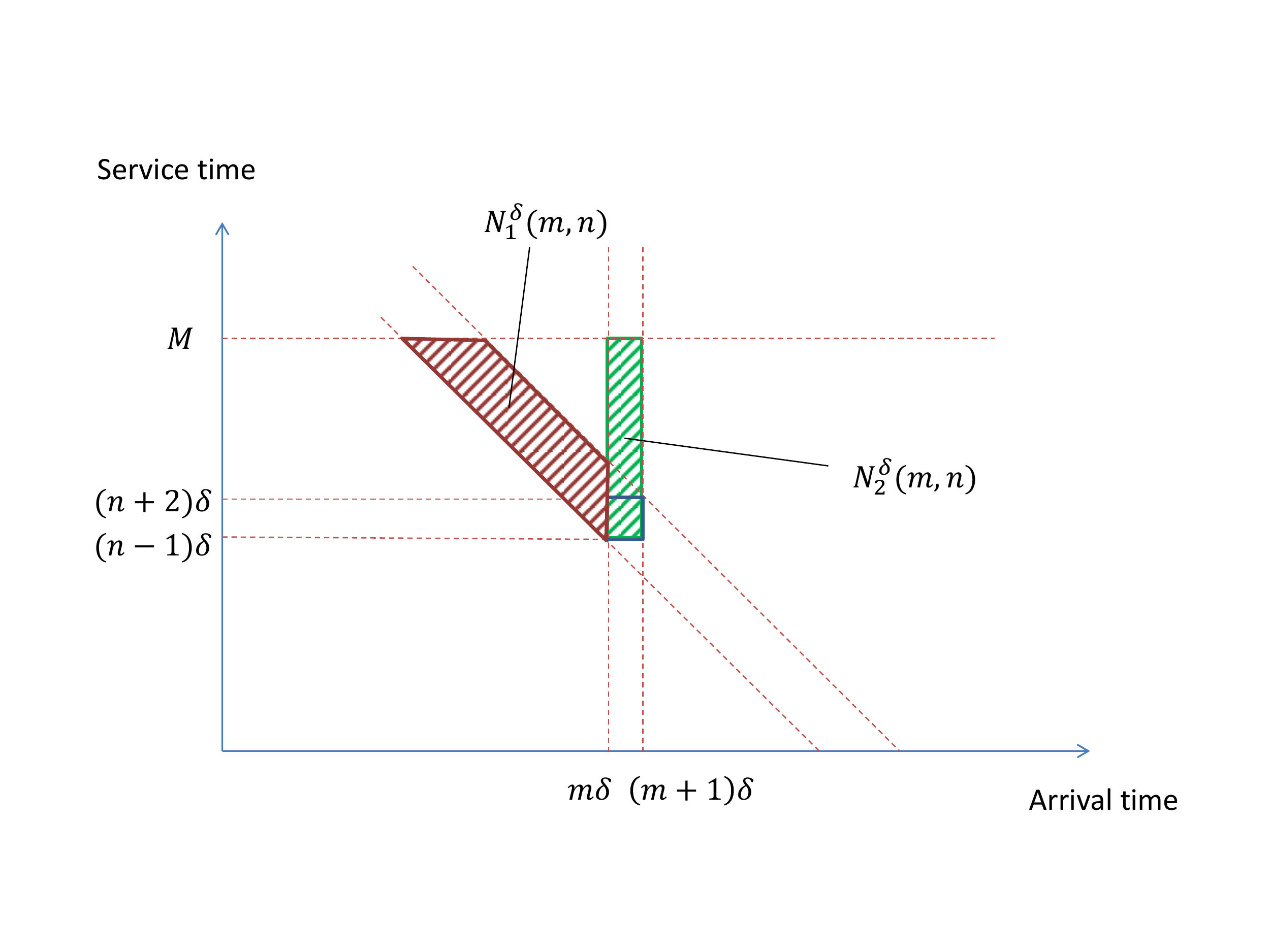}
		\label{N12}
}\caption{Illustrations for $Q_{\lambda}(t,y)$}%
\end{figure}It is best to proceed our analysis by keeping in mind the
pictorial representation that we shall describe. One can represent the arrival
and status of each customer in a two-dimensional plane, with $x$-axis
representing the arrival time and $y$-axis the service time at the time of
arrival. Under this representation, $Q_{\lambda}(t,y)$ is the number of points
in the triangle formed by a vertical line and a $45^{\circ}$ line passing
through $(t,y)$ in its northwest direction. Figure \ref{triangle} depicts the
shape of this triangle. Consequently, we have
\begin{align*}
&  P\left(  \sup_{\substack{0<t_{1}-t_{2}<\delta,\ t_{1}\in(m\delta
,(m+1)\delta],\\|y_{2}-y_{1}|<\delta,\ y_{1}\in(n\delta,(n+1)\delta
]}}|Q_{\lambda}(t_{1},y_{1})-Q_{\lambda}(t_{2},y_{2})|>\lambda\eta\right) \\
&  \leq P(\mathcal{N}_{1}^{\delta}(m,n,\lambda)+\mathcal{N}_{2}^{\delta
}(m,n,\lambda)>\eta\lambda)
\end{align*}
where
\[
\mathcal{N}_{1}^{\delta}(m,n,\lambda)=Q_{\lambda}(m\delta,(n-1)\delta
)-Q_{\lambda}(m\delta,(n+2)\delta)
\]
is the number of customers present at time $m\delta$ who have residual service
time between $(n-1)\delta$ and $(n+2)\delta$, and
\[
\mathcal{N}_{2}^{\delta}(m,n,\lambda)=\sum_{i=N_{\lambda}(m\delta
)+1}^{N_{\lambda}((m+1)\delta)}I(V_{i}>(n-1)\delta),
\]
is the number of arrivals between $m\delta$ and $(m+1)\delta$ which bring
service requirements larger than $(n-1)\delta$. Figure \ref{N12} depicts the
areas under which the points are included in $\mathcal{N}_{1}^{\delta
}(m,n,\lambda)$ and $\mathcal{N}_{2}^{\delta}(m,n,\lambda)$. Fixing $\delta>0$
and $\theta>0$ we take the limit as $\lambda\rightarrow\infty$ in the
following display, obtaining
\begin{align}
&  \frac{1}{\lambda}\log Ee^{\theta(\mathcal{N}_{1}^{\delta}(m,n,\lambda
)+\mathcal{N}_{2}^{\delta}(m,n,\lambda))}\nonumber\\
=~  &  \frac{1}{\lambda}\log E\exp\Bigg\{\theta\Bigg(\sum_{i=1}^{N_{\lambda
}(m\delta)}I(m\delta+(n-1)\delta-A_{i}/\lambda<V_{i}\leq m\delta
+(n+2)\delta-A_{i}/\lambda)\nonumber\\
&  ~~+\sum_{i=N_{\lambda}(m\delta)+1}^{N_{\lambda}((m+1)\delta)}%
I(V_{i}>(n-1)\delta)\Bigg)\Bigg\}\nonumber\\
=~  &  \frac{1}{\lambda}\log E\exp\Bigg\{\int_{0}^{m\delta}\log(e^{\theta
}P(m\delta+(n-1)\delta-u<V_{i}\leq m\delta+(n+2)\delta-u)+1\nonumber\\
&  ~~-P(m\delta+(n-1)\delta-u<V_{i}\leq m\delta+(n+2)\delta-u))dN_{\lambda
}(u)+\log(e^{\theta}\bar{F}((n-1)\delta)\nonumber\\
&  ~~+F((n-1)\delta))[N_{\lambda}((m+1)\delta)-N_{\lambda}(m\delta
)]\Bigg\}\nonumber\\
\rightarrow &  \int_{0}^{m\delta}\psi_{N}(\log(e^{\theta}P(m\delta
+(n-1)\delta-u<V_{i}\leq m\delta+(n+2)\delta-u)+1\nonumber\\
&  ~~-P(m\delta+(n-1)\delta-u<V_{i}\leq m\delta+(n+2)\delta-u)))du\nonumber\\
&  ~~+\psi_{N}(\log(e^{\theta}\bar{F}((n-1)\delta)+F((n-1)\delta
)))\delta\nonumber\label{uniform convergence}\\
=:  &  ~\psi_{\delta}(\theta;m,n)\nonumber
\end{align}
%Moreover, we argue that the convergence in \eqref{uniform convergence} above is uniform for all $m$ and $n$, by invoking a lemma in Blanchet and Lam (2012):
Fix $\theta\geq0$. We argue that $\psi_{\delta}(\theta,m,n)\rightarrow0$ as
$\delta\rightarrow0$ uniformly over $m,n$. Indeed, for any $m,n$,
\begin{align*}
&  \int_{0}^{m\delta}\psi_{N}(\log(e^{\theta}P(m\delta+(n-1)\delta-u<V_{i}\leq
m\delta+(n+2)\delta-u)+1\\
&  ~~-P(m\delta+(n-1)\delta-u<V_{i}\leq m\delta+(n+2)\delta-u)))du\\
\leq &  \int_{0}^{M}\psi_{N}(\log(e^{\theta}P((n-1)\delta+u<V_{i}%
\leq(n+2)\delta+u)+1-P((n-1)\delta+u<V_{i}\leq(n+2)\delta+u)))du\\
\leq &  ~M\psi_{N}(\log(e^{\theta}\alpha(\delta)+1-\alpha(\delta)))
\end{align*}
where $\alpha(\delta):=\sup_{x\in\lbrack0,M]}P(x<V_{i}\leq x+3\delta)=o(1)$ as
$\delta\rightarrow\infty$ by our assumption that the distribution of $V_{i}$
is continuous and the fact that a continuous function is uniformly continuous
on a compact set. On the other hand, we also have
\[
\psi_{N}(\log(e^{\theta}\bar{F}((n-1)\delta)+F((n-1)\delta)))\delta\leq
\psi_{N}(\theta)\delta
\]
for any $m,n$. Combining, we get
\begin{equation}
\psi_{\delta}(\theta,m,n)\leq M\psi_{N}(\log(e^{\theta}\alpha(\delta
)+1-\alpha(\delta)))+\psi_{N}(\theta)\delta\label{GE}%
\end{equation}
%$\psi_\delta(\theta,m,n)\to0$ as $\delta\to0$ uniformly over $m,n$.
%(The uniformity is due to a lemma in the many-server paper that I am polishing..)
Now fix $m$ and $n$. By Chernoff's inequality we get
\[
P(\mathcal{N}_{1}^{\delta}(m,n,\lambda)+\mathcal{N}_{2}^{\delta}%
(m,n,\lambda)>\eta\lambda)\leq e^{-\eta\theta\lambda+\psi_{\delta}%
(\theta,m,n)\lambda+o(\lambda)}%
\]
and so
\begin{align*}
&  \overline{\lim}_{\lambda\rightarrow\infty}\frac{1}{\lambda}\log P\left(
\mathcal{N}_{1}(m,n,\lambda)+\mathcal{N}_{2}(m,n,\lambda)>\eta\lambda\right)
\\
\leq~  &  -\eta\theta+\psi_{\delta}(\theta,m,n)\\
\leq~  &  -\eta\theta+M\psi_{N}(\log(e^{\theta}\alpha(\delta)+1-\alpha
(\delta)))+\psi_{N}(\theta)\delta
\end{align*}
by \eqref{GE}.
%In light of the uniform convergence of $\psi_\delta(\theta,m,n)$ to 0 as $\delta\to0$, for any $\beta>0$, we can choose $\delta$ small enough such that
%$$\limsup_{s\to\infty}\frac{1}{s}\log P(\mathcal{N}_1(m,n)+\mathcal{N}_2(m,n)>\eta s)\leq-\eta\theta+\beta$$
%uniformly for any $m,n$.
Hence
\begin{align*}
&  \overline{\lim}_{\lambda\rightarrow\infty}\frac{1}{\lambda}\log P\left(
w\left(  \frac{Q_{\lambda}}{\lambda},\delta\right)  >\eta\right) \\
\leq~  &  \overline{\lim}_{\lambda\rightarrow\infty}\frac{1}{\lambda}\log
\sum_{m=0}^{\lfloor T/\delta\rfloor}\sum_{n=0}^{\lfloor M/\delta\rfloor
}P(\mathcal{N}_{1}^{\delta}(m,n,\lambda)+\mathcal{N}_{2}^{\delta}%
(m,n,\lambda)>\eta\lambda)\\
\leq~  &  -\eta\theta+M\psi_{N}(\log(e^{\theta}\alpha(\delta)+1-\alpha
(\delta)))+\psi_{N}(\theta)\delta
\end{align*}
which gives
\[
\lim_{\delta\rightarrow0}\limsup_{\lambda\rightarrow\infty}\frac{1}{\lambda
}\log P\left(  w\left(  \frac{Q_{\lambda}}{\lambda},\delta\right)
>\eta\right)  \leq-\eta\theta
\]
Since $\theta$ can be arbitrarily large, we conclude \eqref{equicontinutiy}.
Finally, condition (\ref{LastAux}) follows from the analysis of $\mathcal{N}%
_{2}^{\delta}(m,1,\lambda)/\lambda$.
%Now given any $\gamma>0$, pick $\theta$ large enough and $\beta$ small enough to satisfy $\eta\theta-\beta>\gamma$. We are done.

\end{proof}

\section{Unbounded Service Times\label{SectionUnBdd}}

In this section, we will extend our result to unbounded service times. The
main intuition of the extension beyond the bounded case is to justify that we
can ignore in certain sense the customers who arrive with very large service
time. Let us first introduce a suitable truncation scheme. For any $K>0$ and
$\bar{q}\in AC_{+}\left(  \mathcal{D}\right)  $ define%
\begin{equation}
\phi_{K}(\bar{q})(t,y)=\int_{0}^{t}\int_{y-w}^{K}-\left.  \frac{\partial^{2}%
}{\partial s\partial z}\bar{q}(s,z)\right\vert _{s=w ,z=r+w}drdw
\label{phi K q}%
\end{equation}
for $t\in\lbrack0,T]$ and $y:=u+t\geq0$.

\begin{figure}[th]
\centering
%\subfloat[]{
\includegraphics[scale=0.3]{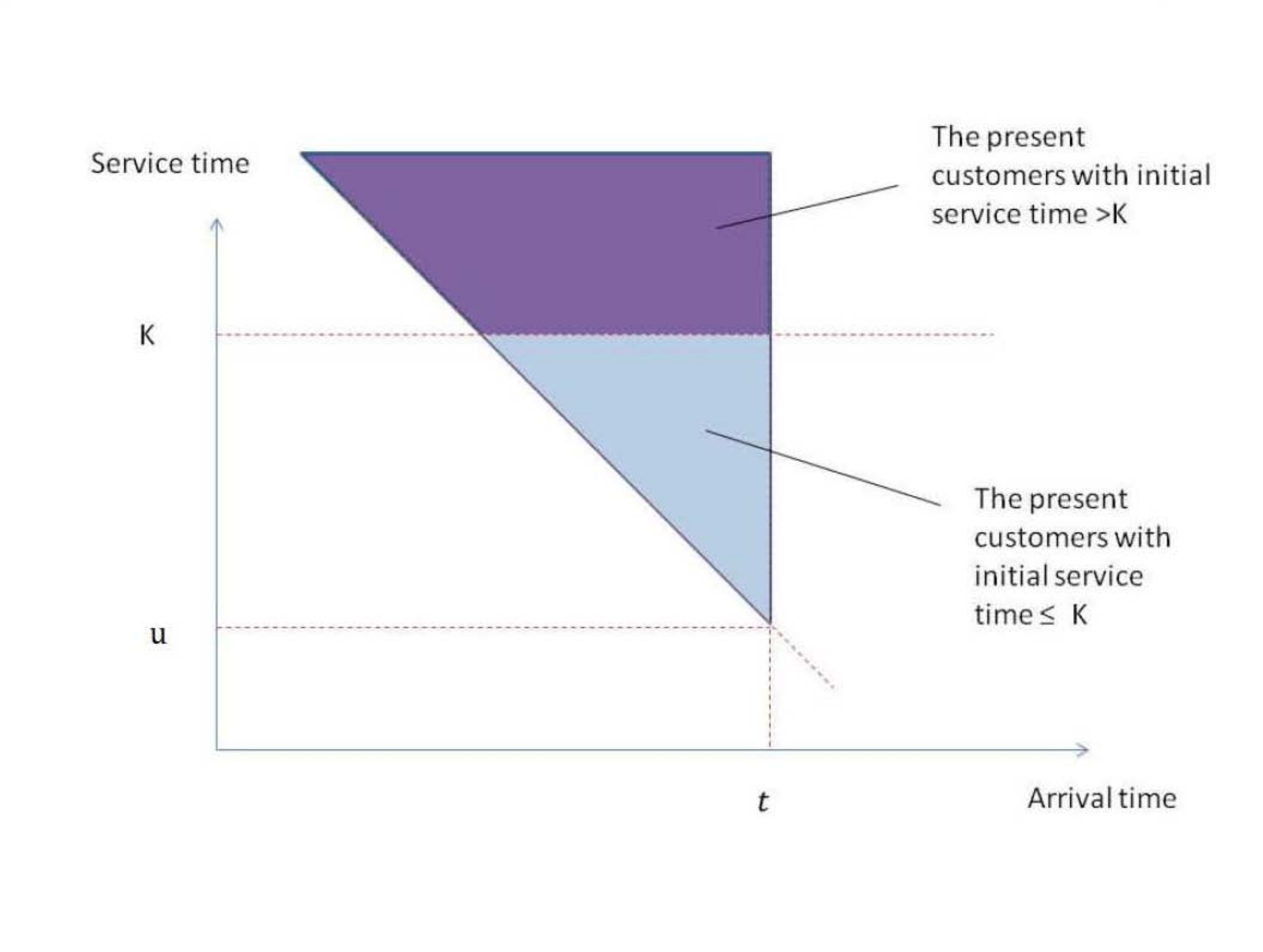}
%\label{fig3}
%}
%\subfloat[][Convergence of the Integration]{
%\includegraphics[scale=0.3]{fig4.pdf}
%		\label{fig4}
%}
\caption{Illustration for $\phi_{K}(\bar{q})(t,t+u)$}%
\label{fig3}%
\end{figure}

%For convenience, we also define
%$$
%\bar{Phi}_K(\bar{q})(t,y)=\int_{0\vee y-K }^t\int_{y-s}^K \frac{\partial^2}{\partial s\partial u}\bar{q}(s,u)du ds
%$$

Since $\bar{q}$ is absolutely continuous, $\phi_{K}\left(  \bar{q}\right)
\left(  t,u+t\right)  $ is well defined. Moreover, the region over which the
integration in \eqref{phi K q} is performed corresponds to the triangular area
depicted in light color in Figure \ref{fig3}. This region corresponds to the
customers that are present at time $t$, have residual residual service time
greater than $y$, and whose initial service time is less than $K$, as
illustrated in Figure \ref{fig3}.

Moreover, for a sample path $\bar{Q}_{\lambda}$, define $\bar{Q}_{\lambda,K}$
as the two-parameter process derived from $\bar{Q}_{\lambda}$ by ignoring the
arrivals with service time greater than $K$ (one way to imagine is that they
leave the system immediately upon arrival). Therefore, $\bar{Q}_{\lambda,K}$
is a two-parameter queue length process corresponding to an infinite server
system with i.i.d. interarrival times following the law $\bar{U}=\sum
_{i=1}^{G}U_{i}/\lambda$, where $G$ is a geometric r.v. independent of the
$U_{i}$'s such that $P\left(  G=n\right)  =\bar{F}(K)^{n-1}F(K)$, $n\geq1$. It
is easy to check that the arrival process corresponding to $\bar{Q}%
_{\lambda,K}$, i.e. by ignoring the arrivals with initial service time larger
than $K$, satisfies the conditions in Section 2.1. The service time then has
the distribution function $F_{K}(x)=F(x)/F(K)$ for $x\in\lbrack0,K]$. We
denote $(V_{n}^{(K)},n=1,\ldots)$ as the sequence of service times in this
modified system.

Now recall the continuous version of $\bar{Q}_{\lambda}$, denoted by
$\tilde{Q}_{\lambda}$ constructed in Section \ref{SubSectAuxiliary}. Moreover,
define $\tilde{Q}_{\lambda,K}$ to be the continuous approximation to $\bar
{Q}_{\lambda,K}$ constructed in exactly the same fashion. In addition, for
$\bar{q}\in AC_{+}\left(  \mathcal{D}_{K}\right)  $ define $I_{K}\left(
\bar{q}\right)  $ as%
\begin{equation}
\sup_{\theta(t,\cdot)\in C[0,T]\times\lbrack0,T+K]}\int_{0}^{T}\left[
\int_{t}^{t+K}\theta(t,y-t)\left(  -\frac{\partial^{2}}{\partial t\ \partial
y}\bar{q}(t,y)\right)  dy-\psi_{N}^{(K)}\left(  \log\left(  \frac{1}{F(K)}%
\int_{0}^{K}e^{\theta(t,y)}dF(y)\right)  \right)  \right]  dt, \label{RT}%
\end{equation}
and set $I_{K}\left(  \bar{q}\right)  =\infty$ otherwise, where $\psi
_{N}^{(K)}$ is the infinitesimal moment generating function corresponding to
the truncated arrival process.

Theorem \ref{ThmBdd1} yields that $\tilde{Q}_{\lambda,K}/\lambda$, satisfies a
full large deviations principle with good rate function $I_{K}(\cdot)$. For
$\bar{q}\in AC_{+}\left(  \mathcal{D}\right)  $ we shall also evaluate
$I_{K}\left(  \bar{q}\right)  $ according to the expression (\ref{RT}).

Since the geometric r.v. $G$ is independent of the $U_{i}$'s, we can compute
the associated logarithmic moment generating function of the modified
interarrival times%
\[
\kappa^{(K)}(\theta):=\kappa(\theta)+\log\left(  \frac{F(K)}{1-\bar
{F}(K)e^{\kappa(\theta)}}\right)  ,
\]
and from which we solve that the associated infinitesimal logarithmic moment
generating function of the arrival process is%
\[
\psi_{N}^{(K)}(\theta):=\psi_{N}(\log(F(K)e^{\theta}+\bar{F}(K))).
\]

Plugging in the above expressions into \eqref{RT}, we have the following
expression of $I_{K}(\bar{q})$
\begin{equation}
\sup_{\theta(\cdot,\cdot)\in C[0,T]\times\lbrack0,T+K]}\int_{0}^{T}\left[
\int_{t}^{T+K}\theta(t,y-t)\left(  -\frac{\partial^{2}}{\partial t\ \partial
y}\bar{q}(t,y)\right)  dy-\psi_{N}\left(  \log\left(  \bar{F}(K)+\int_{0}%
^{K}e^{\theta(t,y)}dF(y)\right)  \right)  \right]  dt. \label{rate unbdd}%
\end{equation}

At this point our strategy involves two steps. First, we want to show that
$\bar{Q}_{\lambda,K}/\lambda$ and $\tilde{Q}_{\lambda,K}/\lambda$ are
exponentially good approximations as $K\nearrow\infty$ to both $\bar
{Q}_{\lambda}/\lambda$ and $\tilde{Q}_{\lambda}/\lambda$ respectively. The
second step consists in using this fact, together with the properties of
$I_{K}(\bar{q})$ as $K\nearrow\infty$ and also properties of $I\left(  \bar
{q}\right)  $ to conclude the identification of the rate function of
$\tilde{Q}_{\lambda}/\lambda$.

So, to execute the first step we first define
\[
N_{\lambda}^{\left(  K\right)  }\left(  t\right)  =\sum_{j=1}^{N_{\lambda
}\left(  t\right)  }I\left(  V_{j}>K\right)  ,
\]
that is, $N_{\lambda}^{\left(  K\right)  }\left(  t\right)  $ is the number of
arrivals with service time larger than $K$ in the $\lambda$-scaled system.
Then we obtain the following result, which is proved at the end of this section.

\begin{lemma}
\label{arrivalsK}For any $\varepsilon>0$,%
\begin{equation}
\lim_{K\rightarrow\infty}\overline{\lim}_{\lambda\rightarrow\infty}\frac
{1}{\lambda}\log P(N_{\lambda}^{\left(  K\right)  }\left(  T\right)
>\lambda\varepsilon)=-\infty. \label{LimNkl1}%
\end{equation}
Consequently, $\bar{Q}_{\lambda,K}/\lambda$ and $\tilde{Q}_{\lambda,K}%
/\lambda$ are exponentially good approximations as $K\nearrow\infty$ to both
$\bar{Q}_{\lambda}/\lambda$ and $\tilde{Q}_{\lambda}/\lambda$ respectively.
\end{lemma}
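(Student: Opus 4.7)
The proof splits naturally into two parts: establishing the super-exponential decay \eqref{LimNkl1} for the ``bad'' arrival count $N_\lambda^{(K)}(T)$, and then using a pathwise sup-norm bound to transfer this into exponential equivalence of the two families of queueing processes.

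For the first part, the plan is to apply Chernoff's bound and condition on $N_\lambda(T)$. Because the $V_j$'s are i.i.d.\ and independent of the arrival process, for any $\theta>0$ one has $E\exp(\theta N_\lambda^{(K)}(T)) = E[(F(K)+\bar F(K)e^\theta)^{N_\lambda(T)}]$, so
\[
\frac{1}{\lambda}\log P(N_\lambda^{(K)}(T)>\lambda\varepsilon)
\leq -\theta\varepsilon + \frac{1}{\lambda}\log E\exp\bigl(N_\lambda(T)\log(F(K)+\bar F(K)e^\theta)\bigr).
\]
Letting $\lambda\to\infty$ and using the Gärtner--Ellis limit $\lambda^{-1}\log E\exp(\eta N_\lambda(T))\to T\psi_N(\eta)$ (which is what underlies the calculation already used in the proof of Lemma \ref{LemAuxLDPFinite}), the right-hand side converges to $-\theta\varepsilon + T\psi_N\bigl(\log(F(K)+\bar F(K)e^\theta)\bigr)$. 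Since $\bar F(K)\to 0$ as $K\to\infty$ and $\psi_N$ is continuous with $\psi_N(0)=0$ (because $\kappa(0)=0$), sending $K\to\infty$ leaves $-\theta\varepsilon$, which tends to $-\infty$ as $\theta\to\infty$. This yields \eqref{LimNkl1}.

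For the second part, the key observation is a pathwise bound. For $y\geq t$ one has the exact decomposition
\[
\bar Q_\lambda(t,y)-\bar Q_{\lambda,K}(t,y)
=\sum_{n=1}^{N_\lambda(t)} I(V_n+A_n/\lambda>y)\,I(V_n>K)
\leq N_\lambda^{(K)}(T),
\]
and the analogous identity together with \eqref{EqQbarExt} (applied separately to $\bar Q_\lambda$ and $\bar Q_{\lambda,K}$, whose arrival counts differ by $N_\lambda^{(K)}$) gives $|\bar Q_\lambda(t,y)-\bar Q_{\lambda,K}(t,y)|\leq 2N_\lambda^{(K)}(T)$ uniformly over $(t,y)\in\mathcal D$. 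Hence $\|\bar Q_\lambda-\bar Q_{\lambda,K}\|_{\mathcal D}\leq 2N_\lambda^{(K)}(T)$ almost surely. Combining with the uniform estimate $\|\tilde Q_\lambda-\bar Q_\lambda\|_{\mathcal D}\leq 4$ and $\|\tilde Q_{\lambda,K}-\bar Q_{\lambda,K}\|_{\mathcal D}\leq 4$ from \eqref{Bnd1} yields $\|\tilde Q_\lambda-\tilde Q_{\lambda,K}\|_{\mathcal D}\leq 8+2N_\lambda^{(K)}(T)$.

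Using these pathwise bounds, for any $\delta>0$ and $\lambda$ large enough one obtains
\[
P\bigl(\|\bar Q_\lambda/\lambda-\bar Q_{\lambda,K}/\lambda\|_{\mathcal D}>\delta\bigr)
\leq P\bigl(N_\lambda^{(K)}(T)>\delta\lambda/2\bigr),
\]
and similarly for the $\tilde Q$-processes. Applying \eqref{LimNkl1} with $\varepsilon=\delta/2$ and taking $\lim_{K\to\infty}\overline{\lim}_{\lambda\to\infty}\lambda^{-1}\log$ verifies Definition 4.2.14 in \cite{DEMZEI98}, establishing that $\bar Q_{\lambda,K}/\lambda$ (resp.\ $\tilde Q_{\lambda,K}/\lambda$) is an exponentially good approximation of $\bar Q_\lambda/\lambda$ (resp.\ $\tilde Q_\lambda/\lambda$). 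The main (mild) obstacle is the sup-norm bound on the region $y\leq t$, which requires invoking \eqref{EqQbarExt} for both processes and controlling the difference of their arrival processes by $N_\lambda^{(K)}$; every other step is a routine application of Chernoff's inequality and the already established Gärtner--Ellis limit for $N_\lambda$.
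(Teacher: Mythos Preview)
Your proof is correct and follows essentially the same approach as the paper. For \eqref{LimNkl1} both you and the paper use Chernoff's inequality combined with the Gärtner--Ellis limit $\lambda^{-1}\log E e^{\eta N_\lambda(T)}\to T\psi_N(\eta)$ to arrive at the bound $-\theta\varepsilon+T\psi_N(\log(F(K)+\bar F(K)e^\theta))$, then send $K\to\infty$ and finally $\theta\to\infty$. Your treatment of the ``Consequently'' clause is in fact more explicit than the paper's (which stops after establishing \eqref{LimNkl1}); your pathwise sup-norm bounds via $N_\lambda^{(K)}(T)$ and the triangle inequality through \eqref{Bnd1} are exactly what is needed, and indeed the same estimate $\|\bar Q_\lambda-\bar Q_{\lambda,K}\|_{\mathcal D}\le N_\lambda^{(K)}(T)$ is invoked later in the paper's proof of Lemma~\ref{LemmaLDPUB1}.
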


\bigskip

Using the previous lemma we obtain the following result. The proof is
straightforward, but following our convention we shall give it at the end of
the section.

\begin{lemma}
\label{LemmaWeakLDPUBD}The family $(\tilde{Q}_{\lambda}/\lambda:\lambda>0)$
satisfies a weak large deviations principle on $C_{+}\left(  \mathcal{D}%
\right)  $ with rate function%
\[
I^{\ast}\left(  \bar{q}\right)  :=\sup_{\delta>0}\underline{\lim
}_{K\rightarrow\infty}\inf_{\{z:\left\vert \left\vert z-\bar{q}\right\vert
\right\vert _{\mathcal{D}}\leq\delta\}}I_{K}\left(  z\right)  .
\]

\end{lemma}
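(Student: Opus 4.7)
The plan is to apply the standard result on exponentially good approximations (Theorem 4.2.16 in \cite{DEMZEI98}), which gives a weak LDP for a target family from an LDP for an approximating family plus an exponential equivalence statement that holds in a limiting sense.

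First, I would establish that for each fixed $K$, the truncated process $\tilde{Q}_{\lambda,K}/\lambda$ satisfies a full LDP on $(C_{+}(\mathcal{D}),\|\cdot\|_{\mathcal{D}})$ with good rate function $I_{K}$. This is a direct consequence of Theorem \ref{ThmBdd1} applied to the $K$-truncated system, whose interarrival and service-time assumptions were verified after the definition of $\bar{Q}_{\lambda,K}$. One has to view $\tilde{Q}_{\lambda,K}$, originally defined on $\mathcal{D}_{K}$, as an element of $C_{+}(\mathcal{D})$: this is done using (\ref{EqQbarExt})-type extension in the region $0\le y \le t$ (via the interpolated counting process $\tilde{N}_{\lambda}$) and noting that $\tilde{Q}_{\lambda,K}(t,y)=0$ once $y>t+K$. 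On this extended space the rate function inherited from Theorem \ref{ThmBdd1} is the one appearing in (\ref{rate unbdd}), namely $I_{K}$, which is $+\infty$ outside the natural range of the truncated process.

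Second, I would invoke Lemma \ref{arrivalsK}, which asserts that $\tilde{Q}_{\lambda,K}/\lambda$ is an exponentially good approximation of $\tilde{Q}_{\lambda}/\lambda$ as $K\to\infty$, i.e., for every $\eta>0$,
\[
\lim_{K\to\infty}\overline{\lim}_{\lambda\to\infty}\frac{1}{\lambda}\log P\left(\left\|\tilde{Q}_{\lambda}/\lambda-\tilde{Q}_{\lambda,K}/\lambda\right\|_{\mathcal{D}}>\eta\right)=-\infty.
\]
Measurability of the relevant sets is not an issue because both $\tilde{Q}_{\lambda}$ and $\tilde{Q}_{\lambda,K}$ are constructed from the same separable input (arrival times and service times) by continuous polygonal interpolation, so the difference is a separable process in $C_{+}(\mathcal{D})$.

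Third, with the LDP for each $\{\tilde{Q}_{\lambda,K}/\lambda\}_{\lambda}$ in hand, plus the exponential approximation above, Theorem 4.2.16 in \cite{DEMZEI98} yields directly that $\tilde{Q}_{\lambda}/\lambda$ obeys a weak LDP on $C_{+}(\mathcal{D})$ with rate function
\[
I^{\ast}(\bar{q})=\sup_{\delta>0}\underline{\lim}_{K\to\infty}\inf_{\{z:\|z-\bar{q}\|_{\mathcal{D}}\le\delta\}}I_{K}(z),
\]
which is exactly the claimed expression.

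The main obstacle is essentially bookkeeping rather than technical: one must verify that the extension of $\tilde{Q}_{\lambda,K}$ from $\mathcal{D}_{K}$ to all of $\mathcal{D}$ preserves the LDP with rate function $I_{K}$, and that the exponential approximation in Lemma \ref{arrivalsK} is stated with respect to the same uniform norm on all of $\mathcal{D}$. The stronger step of identifying $I^{\ast}$ with $I$ (and upgrading the weak LDP to a full LDP) will be postponed to a separate exponential tightness argument later in the paper, so that at this stage all that is required is to fit the hypotheses of \cite[Theorem 4.2.16]{DEMZEI98}.
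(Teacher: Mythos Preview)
Your proposal is correct and follows exactly the paper's approach: the paper's proof is a one-line invocation of part (a) of Theorem 4.2.16 in \cite{DEMZEI98}, relying on the LDP for $\tilde{Q}_{\lambda,K}/\lambda$ from Theorem \ref{ThmBdd1} and the exponential-approximation statement of Lemma \ref{arrivalsK}. You have simply unpacked the hypotheses of that theorem in more detail than the paper does.
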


\bigskip

We now extend the weak large deviations principle into a full large deviations
principle with a good rate function using exponential tightness.

\begin{lemma}
\label{LemmaLDPUB1}The family $(\tilde{Q}_{\lambda}/\lambda:\lambda>0)$ is
exponentially tight on $C_{+}\left(  \mathcal{D}\right)  $ and therefore it
satisfies a full large deviations principle with good rate function $I^{\ast
}\left(  \cdot\right)  .$\bigskip
\end{lemma}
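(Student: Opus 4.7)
The plan is to establish exponential tightness of $(\tilde{Q}_{\lambda}/\lambda)$ on $C_{+}(\mathcal{D})$ under the uniform topology, and then combine it with the weak large deviations principle from Lemma \ref{LemmaWeakLDPUBD} via Lemma 1.2.18(b) of \cite{DEMZEI98}. The latter result promotes a weak LDP to a full one with an automatically good rate function, so all the work is in showing exponential tightness on the unbounded domain $\mathcal{D} = [0,T]\times[0,\infty)$.

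To build the required compact sets, I would fix $\alpha>0$ and select an increasing sequence $K_{n}\nearrow\infty$ via Lemma \ref{arrivalsK} so that
\[
\limsup_{\lambda\to\infty}\lambda^{-1}\log P\!\left(N_{\lambda}^{(K_{n})}(T)>\lambda/n\right)\leq -(\alpha+n+1)
\]
for every $n$. On the complementary event, no customer in the system has initial service time exceeding $K_{n}$, so $\bar{Q}_{\lambda}(t,y)=0$ whenever $y\geq t+K_{n}$; combined with the uniform bound $\|\tilde{Q}_{\lambda}-\bar{Q}_{\lambda}\|_{\mathcal{D}}\leq 4$ this forces $\|\tilde{Q}_{\lambda}/\lambda\|_{[0,T]\times[T+K_{n},\infty)}\leq 2/n$. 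Next, Theorem \ref{ThmBdd1} applied to the truncated system $\tilde{Q}_{\lambda,K_{n}}$ gives a good rate function and hence exponential tightness on $C_{+}(\mathcal{D}_{K_{n}})$, producing a compact set $\mathcal{C}_{n}$ with $\limsup\lambda^{-1}\log P(\tilde{Q}_{\lambda,K_{n}}/\lambda\notin\mathcal{C}_{n})\leq -(\alpha+n+1)$. The natural coupling between $\tilde{Q}_{\lambda}$ and $\tilde{Q}_{\lambda,K_{n}}$ (the latter simply discards the high-service arrivals) then places the restriction of $\tilde{Q}_{\lambda}/\lambda$ to $\mathcal{D}_{K_{n}}$ inside a uniform enlargement $\tilde{\mathcal{C}}_{n}$ of $\mathcal{C}_{n}$ on the same good event. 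Defining
\[
\mathcal{K}_{\alpha}=\left\{x\in C_{+}(\mathcal{D}):\; x|_{\mathcal{D}_{K_{n}}}\in\overline{\tilde{\mathcal{C}}_{n}}\text{ and }\|x\|_{[0,T]\times[T+K_{n},\infty)}\leq 2/n\text{ for all }n\right\},
\]
an Arzel\`a--Ascoli argument shows $\mathcal{K}_{\alpha}$ is compact in $C_{+}(\mathcal{D})$ because its elements are uniformly bounded, equicontinuous on each bounded rectangle, and uniformly vanish at infinity. A union bound over $n$ yields $\limsup\lambda^{-1}\log P(\tilde{Q}_{\lambda}/\lambda\notin\mathcal{K}_{\alpha})\leq -\alpha$, as needed.

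The step I expect to be the main technical obstacle is the coupling bookkeeping: verifying that the triangular-interpolation construction in Section \ref{SubSectAuxiliary}, when applied globally to $\tilde{Q}_{\lambda}$ versus applied only to the sub-system feeding $\tilde{Q}_{\lambda,K_{n}}$, differs by at most $O(N_{\lambda}^{(K_{n})}(T))$ in the uniform norm on $\mathcal{D}_{K_{n}}$. Each arrival with service time exceeding $K_{n}$ can shift the zig-zag mesh and perturb the interpolated surface locally, but the perturbation is bounded by a constant per such arrival, so aggregating gives the claimed control, which Lemma \ref{arrivalsK} renders negligible at the exponential scale $e^{-\lambda(\alpha+n+1)}$. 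Once this coupling estimate is in hand, assembling $\mathcal{K}_{\alpha}$ and invoking Lemma 1.2.18(b) of \cite{DEMZEI98} completes the proof.
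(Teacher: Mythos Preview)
Your approach is correct, but it is more elaborate than the paper's and takes a somewhat different route. The paper establishes exponential tightness on $C_+(\mathcal{D})$ by directly verifying the Arzel\`a--Ascoli-type conditions for the noncompact domain: a uniform modulus-of-continuity estimate for $\tilde{Q}_{\lambda}/\lambda$, plus a tail estimate $P(\sup_{t\leq T,\,y\geq K}\tilde{Q}_{\lambda}(t,y)/\lambda>\eta)\leq e^{-\lambda a}$ for $K$ large. The tail is bounded by $P(N_{\lambda}^{(K)}(T)>\lambda\eta)$ and handled by Lemma~\ref{arrivalsK}; the modulus of continuity is reduced, via the same coupling, to that of the truncated process $\tilde{Q}_{\lambda,K}$, which was already controlled in Lemma~\ref{tightness}. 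Your version instead extracts compact sets from the full LDP for each truncation (Theorem~\ref{ThmBdd1}) and layers them across a sequence $K_n\nearrow\infty$; this works, but you end up re-running an Arzel\`a--Ascoli argument when checking compactness of $\mathcal{K}_{\alpha}$, so the savings from invoking Theorem~\ref{ThmBdd1} are largely cosmetic.

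Two small corrections. First, on the complement of $\{N_{\lambda}^{(K_n)}(T)>\lambda/n\}$ it is not the case that \emph{no} customer has service time exceeding $K_n$; rather, at most $\lambda/n$ do, which is exactly what gives $\bar{Q}_{\lambda}(t,y)\leq\lambda/n$ for $y\geq t+K_n$ and hence your bound $2/n$ after adding the interpolation error. Second, the coupling step you flag as the main obstacle is immediate and requires no mesh analysis: the triangle inequality through $\bar{Q}$ gives
\[
\|\tilde{Q}_{\lambda}-\tilde{Q}_{\lambda,K_n}\|_{\mathcal{D}}\leq \|\tilde{Q}_{\lambda}-\bar{Q}_{\lambda}\|_{\mathcal{D}}+\|\bar{Q}_{\lambda}-\bar{Q}_{\lambda,K_n}\|_{\mathcal{D}}+\|\bar{Q}_{\lambda,K_n}-\tilde{Q}_{\lambda,K_n}\|_{\mathcal{D}}\leq 4+N_{\lambda}^{(K_n)}(T)+4,
\]
since $0\leq \bar{Q}_{\lambda}-\bar{Q}_{\lambda,K_n}\leq N_{\lambda}^{(K_n)}(T)$ pointwise. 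Lemma~\ref{arrivalsK} then makes this negligible at the required exponential scale, and Lemma~1.2.18(b) of \cite{DEMZEI98} finishes as you say.
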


We proceed to show the identification $I^{\ast}\left(  \bar{q}\right)
=I\left(  \bar{q}\right)  $. We now collect useful properties that we will
need to show this identification.

\begin{lemma}
\label{Iqbar}
\end{lemma}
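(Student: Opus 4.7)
The plan is to establish the properties of $I(\cdot)$ and $I_K(\cdot)$ that will be needed for the identification $I^\ast(\bar q) = I(\bar q)$. I expect the key facts to be: (a) monotonicity $I_K(\bar q) \leq I_{K'}(\bar q) \leq I(\bar q)$ whenever $K \leq K'$, (b) pointwise convergence $I_K(\bar q) \uparrow I(\bar q)$ as $K \to \infty$ for every $\bar q \in AC_+(\mathcal{D})$, and (c) lower semicontinuity (and convexity) of $I$ on $L_{+,\infty}(\mathcal{D})$ with respect to uniform convergence.

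For (a), the strategy is to compare the admissible classes of test functions. Any $\theta \in C_b([0,T]\times[0,T+K])$ entering the definition of $I_K$ can be extended to a test function supported on $[0,T]\times[0,T+K']$ (by taking $\theta$ to be $0$ on the extra strip), which yields the same inner integral against $-\partial^2 \bar q/\partial t\,\partial y$ because the extension is zero, and the outer $\psi_N$ term changes in the right direction after absorbing $\bar F(K)-\bar F(K')$. The comparison with $I(\bar q)$ itself is identical: restrict $\theta$ from $C_b([0,T]\times[0,\infty))$ to $[0,T]\times[0,T+K]$ (effectively by zero extension on $[T+K,\infty)$) and recover the formula in (\ref{rate unbdd}) as a lower bound.

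For (b), given $\epsilon>0$ choose $\theta \in C_b([0,T]\times[0,\infty))$ achieving the $I(\bar q)$-supremum up to $\epsilon$. Truncate $\theta$ by multiplying by a cutoff that equals $1$ on $[0,T]\times[0,K]$ and vanishes for $y \geq T+K$. Two dominated-convergence arguments then let me pass to the limit: the inner integral $\int_t^\infty \theta(t,y-t)(-\partial^2 \bar q/\partial t\,\partial y)\,dy$ converges because $|\partial^2 \bar q/\partial t\,\partial y|$ is integrable over $\mathcal{D}$ by the definition of $AC_+(\mathcal{D})$, and $\psi_N(\log\int_0^\infty e^{\theta(t,y)}dF(y))$ converges by bounded convergence (the truncated integrands are sandwiched between $e^{-\|\theta\|_\infty}$ and $e^{\|\theta\|_\infty}$) plus continuity of $\psi_N \circ \log$. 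Monotonicity from (a) upgrades pointwise convergence to monotone convergence.

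For (c), write $I$ as a pointwise supremum of affine-plus-constant functionals indexed by $\theta$; each such functional is linear in $-\partial^2 \bar q/\partial t\,\partial y$ but must be expressed as a continuous functional of $\bar q$ itself under the uniform topology. After an integration by parts the inner integrals can be written using only $\bar q$ evaluated at boundary points and through differences that are continuous in the uniform norm on suitable subclasses; taking a supremum preserves lower semicontinuity. The main obstacle I anticipate is handling (b) carefully: the test function $\theta$ chosen to be nearly optimal for $I(\bar q)$ need not have compact support, so the cutoff step must be justified uniformly. This is where the finiteness hypothesis $\int\!\!\int |\partial^2 \bar q/\partial t\,\partial y|\,dy\,dt<\infty$ from condition (i) of $AC_+(\mathcal{D})$ is crucial, since it is what makes the tail $\int_{T+K}^\infty \theta(t,y-t)(-\partial^2 \bar q/\partial t\,\partial y)\,dy$ vanish uniformly in the admissible $\theta$ of bounded sup-norm as $K \to \infty$, and analogously forces the $\psi_N$ integrand to behave well.
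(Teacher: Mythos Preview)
Your plan captures part (i) of the lemma (your items (a) and (b)), and the argument you sketch for the monotone convergence $I_K(\bar q)\nearrow I(\bar q)$ when $I(\bar q)<\infty$ is essentially the paper's. However, you omit the identity $I(\phi_K(\bar q))=I_K(\phi_K(\bar q))=I_K(\bar q)$ involving the truncation map $\phi_K$; this is not cosmetic. In the proof of Theorem \ref{ThmUnBdd1} one must evaluate $\sup_{\delta>0}\inf_{\|z-\bar q\|\le\delta}I_K(z)$, and the paper does this by observing that $I_K(z)$ depends only on $\phi_K(z)$, so the infimum reduces to one over a ball in $\mathcal{D}_K$ around $\phi_K(\bar q)$, where lower semicontinuity of $I_K$ (already known from Theorem \ref{ThmBdd1}) applies. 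Without the $\phi_K$ identity you have no handle on this infimum.

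The more serious gap is your item (c). Parts (ii) and (iii) of the lemma are \emph{not} the assertion that $I$ is lower semicontinuous; they say that when $I(\bar q)=\infty$ one can find a finite-dimensional projection $p_\kappa$ with $I_K(p_\kappa(\bar q))>M$ for $K$ large, and that this lower bound persists on a uniform ball around $\bar q$. This is strictly stronger than lower semicontinuity of $I$, and it is exactly what the identification $I^\ast=I$ needs in the case $I(\bar q)=\infty$: from $I_K(z)\ge I_K(p_\kappa(z))>M$ for all $z$ near $\bar q$ one gets $\inf_{\|z-\bar q\|\le\varepsilon}I_K(z)>M$. Lower semicontinuity of $I$ would only give $I(z)>M$ on a neighborhood, and since $I_K\le I$ this says nothing about $I_K(z)$; the pointwise convergence $I_K\uparrow I$ is not uniform over the ball, so you cannot upgrade it. The paper's proof of (ii) splits into three cases ($\bar q$ not absolutely continuous; $\partial^2\bar q/\partial t\,\partial y$ not integrable; integrable but supremum infinite), and in each case exhibits an explicit projection $\kappa$ and choice of $\theta_{ij}$'s making the finite-dimensional rate large; part (iii) then follows by a direct continuity estimate on the $\delta_{ij}(\kappa)$'s. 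Your integration-by-parts route to lower semicontinuity of $I$ is also problematic on its own terms, since it must be valid at every $\bar q\in L_{+,\infty}(\mathcal{D})$, including those that are not absolutely continuous, where $\partial^2\bar q/\partial t\,\partial y$ is not available.
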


\textit{i) For any }$\bar{q}$\textit{ such that }$I(\bar{q})<\infty$\textit{,
we have }$I(\phi_{K}(\bar{q}))=I_{K}(\phi_{K}(\bar{q}))=I_{K}(\bar{q})\nearrow
I(\bar{q})$\textit{ as }$K\rightarrow\infty$\textit{; the notation }%
$I_{K}(\bar{q})\nearrow I(\bar{q})$\textit{ implies that }$(I_{K}(\bar
{q}):K>0)$\textit{ is non decreasing in }$K$\textit{ and convergent to
}$I(\bar{q})$\textit{.}

\textit{ii) For any }$\bar{q}$\textit{ such that }$I(\bar{q})=\infty$\textit{,
and each }$M>0$\textit{, there exists a projection }$p_{\kappa}$\textit{
(following the notation introduced in the proof of Lemma
\ref{projection limit}) such that, for large enough $K$, }%
\[
I_{K}(p_{\kappa}(\bar{q}))>M.
\]

\textit{iii) Finally, with }$\kappa$\textit{ from ii) there exists
}$\varepsilon>0$\textit{ such that if }$\hat{q}\in C_{+}\left(  \mathcal{D}%
\right)  $\textit{ and }$\left\vert \left\vert \hat{q}-\bar{q}\right\vert
\right\vert _{\mathcal{D}}<\varepsilon$\textit{ then}%
\[
I_{K}(p_{\kappa}(\hat{q}))>M.
\]

\bigskip

We now are ready to prove the following important result of this section.

\begin{thm}
\label{ThmUnBdd1} $\tilde{Q}_{\lambda}/\lambda$ satisfies a large deviations
principle with good rate function defined in (\ref{R}) under the uniform
topology on $[0,T]\times\lbrack0,\infty)$.
\end{thm}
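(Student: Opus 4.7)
The plan is to combine Lemma \ref{LemmaLDPUB1}, which already gives the full LDP for $\tilde{Q}_\lambda/\lambda$ with good rate function $I^\ast(\cdot)$, with an identification $I^\ast(\bar q) = I(\bar q)$ for every $\bar q \in C_+(\mathcal{D})$. Once this identification is in place, the theorem follows immediately. So the entire task reduces to proving the equality of the two rate functions, using the properties collected in Lemma \ref{Iqbar}.

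For the upper bound $I^\ast(\bar q)\le I(\bar q)$ I would plug the particular choice $z=\bar q$ into the infimum appearing in the definition of $I^\ast$: this yields
\[
I^\ast(\bar q) \le \sup_{\delta>0}\underline{\lim}_{K\to\infty} I_K(\bar q) = \lim_{K\to\infty} I_K(\bar q),
\]
which equals $I(\bar q)$ when $I(\bar q)<\infty$ by Lemma \ref{Iqbar}(i), and is trivially bounded by $I(\bar q)=\infty$ otherwise. This direction is essentially free.

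For the lower bound on the finite-rate set $\{\bar q : I(\bar q)<\infty\}$ I would use the monotonicity $I_K \nearrow I$ from Lemma \ref{Iqbar}(i) together with the lower semicontinuity of each $I_{K_0}$ (which holds since $I_{K_0}$ is a good rate function by Theorem \ref{ThmBdd1}). Concretely, fix $K_0$; for every $K\ge K_0$ and every $z$, $I_K(z)\ge I_{K_0}(z)$, hence $\inf_{\|z-\bar q\|_\mathcal{D}\le\delta} I_K(z)\ge \inf_{\|z-\bar q\|_\mathcal{D}\le\delta} I_{K_0}(z)$. Taking $\underline{\lim}_K$ and then $\sup_\delta$, lower semicontinuity of $I_{K_0}$ gives $\sup_\delta \inf_{\|z-\bar q\|_\mathcal{D}\le\delta} I_{K_0}(z) \ge I_{K_0}(\bar q)$. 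Therefore $I^\ast(\bar q)\ge I_{K_0}(\bar q)$ for every $K_0$, and letting $K_0\to\infty$ with Lemma \ref{Iqbar}(i) gives $I^\ast(\bar q)\ge I(\bar q)$.

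For the case $I(\bar q)=\infty$ I would exploit parts (ii) and (iii) of Lemma \ref{Iqbar}. Given an arbitrary $M>0$, pick the projection $p_\kappa$ provided by (ii) with $I_K(p_\kappa(\bar q))>M$ for large $K$, and pick the $\varepsilon>0$ from (iii) so that $I_K(p_\kappa(\hat q))>M$ whenever $\|\hat q-\bar q\|_\mathcal{D}<\varepsilon$. Because $p_\kappa$ is a continuous projection, the contraction principle applied to the finite-dimensional variables gives $I_K(\hat q)\ge I_K(p_\kappa(\hat q))>M$ for all $\hat q$ in this $\varepsilon$-neighborhood. Hence $\inf_{\|z-\bar q\|_\mathcal{D}\le\varepsilon/2} I_K(z)\ge M$ for all sufficiently large $K$, so that $\underline{\lim}_K \inf_z I_K(z)\ge M$ and consequently $I^\ast(\bar q)\ge M$. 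Since $M$ is arbitrary, $I^\ast(\bar q)=\infty=I(\bar q)$. The main obstacle is the finite-rate case, whose delicate point is the interchange of the $\sup_\delta$, $\underline{\lim}_K$, and the infimum; once the monotonicity of $I_K$ in $K$ is used to remove the dependence on $K$ and drop down to a single $I_{K_0}$, lower semicontinuity of $I_{K_0}$ closes the argument cleanly. The unbounded-service case thus rests entirely on Lemma \ref{Iqbar}, which packages all the analytical work.
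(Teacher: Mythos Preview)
Your proposal is correct and follows the same overall strategy as the paper: invoke Lemma \ref{LemmaLDPUB1} to reduce to the identification $I^\ast=I$, then use Lemma \ref{Iqbar} together with the monotonicity of $I_K$ in $K$ and the lower semicontinuity of each $I_K$. The handling of the case $I(\bar q)=\infty$ is essentially identical to the paper's.

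For the finite-rate case your route is slightly different and in fact cleaner. The paper swaps $\sup_\delta$ and $\sup_K$ and then passes through the truncation map $\phi_K$, rewriting $\sup_\delta\inf_{\{z:\|z-\bar q\|\le\delta\}}I_K(z)$ as $\sup_\delta\inf_{\{\phi_K(z):\|\phi_K(z)-\phi_K(\bar q)\|\le\delta\}}I_K(\phi_K(z))$ before invoking lower semicontinuity. You instead use the pointwise inequality $I_K(z)\ge I_{K_0}(z)$ (valid for all $z$, not only those with $I(z)<\infty$, as is clear from the variational expression \eqref{rate unbdd}) to drop to a fixed $K_0$ and then apply lower semicontinuity of $I_{K_0}$ directly. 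This avoids the $\phi_K$ detour entirely; just note that you are invoking monotonicity of $K\mapsto I_K(z)$ for \emph{every} $z$, which is stated in the paper's proof of Lemma~\ref{Iqbar}(i) as ``obvious'' but is formally only recorded there under the hypothesis $I(\bar q)<\infty$. The general statement follows by the same test-function extension argument.
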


\begin{proof}
[Proof of Theorem \ref{ThmUnBdd1}]Given Lemma \ref{LemmaLDPUB1} all we need to
show is that $I^{\ast}\left(  \bar{q}\right)  =I\left(  \bar{q}\right)  $.
%If
%$\bar{q}$ is not absolutely continuous it follows immediately that $I^{\ast
%}\left(  \bar{q}\right)  =\infty$. Now, suppose that $\bar{q}$ is absolutely
%continuous, but
Suppose that $\bar{q}$ is such that $I\left(  \bar{q}\right)  =\infty$. Then,
parts ii) and iii) in Lemma \ref{Iqbar} imply in particular that for every
$M$, there exists $K$, a projection $\kappa$, and $\varepsilon>0$ such that
$I_{K}\left(  p_{\kappa}\left(  \hat{q}\right)  \right)  >M$ for any
$\left\vert \left\vert \hat{q}-\bar{q}\right\vert \right\vert _{\mathcal{D}%
}<\varepsilon$. Consequently, we conclude, by using the monotonicity of
$I_{K}\left(  \bar{q}\right)  $ as a function of $K$ and taking subsequences,
that%
\[
I^{\ast}\left(  \bar{q}\right)  =\sup_{\delta>0}\underline{\lim}%
_{K\rightarrow\infty}\inf_{\{z:\left\vert \left\vert z-\bar{q}\right\vert
\right\vert _{\mathcal{D}}\leq\delta\}}I_{K}\left(  z\right)  =\sup_{\delta
>0}\sup_{K>0}\inf_{\{z:\left\vert \left\vert z-\bar{q}\right\vert \right\vert
_{\mathcal{D}}\leq\delta\}}I_{K}\left(  z\right)  =\infty.
\]
If $I\left(  \bar{q}\right)  <\infty$, then note that
\[
I^{\ast}\left(  \bar{q}\right)  =\sup_{\delta>0}\sup_{K>0}\inf_{\{z:\left\vert
\left\vert z-\bar{q}\right\vert \right\vert _{\mathcal{D}}\leq\delta\}}%
I_{K}\left(  z\right)  =\sup_{K>0}\sup_{\delta>0}\inf_{\{z:\left\vert
\left\vert z-\bar{q}\right\vert \right\vert _{\mathcal{D}}\leq\delta\}}%
I_{K}\left(  z\right)  .
\]
Further, observe that%
\[
\sup_{\delta>0}\inf_{\{z:\left\vert \left\vert z-\bar{q}\right\vert
\right\vert _{\mathcal{D}}\leq\delta\}}I_{K}\left(  z\right)  =\sup_{\delta
>0}\inf_{\{\phi_{K}\left(  z\right)  :\left\vert \left\vert \phi_{K}\left(
z\right)  -\phi_{K}\left(  \bar{q}\right)  \right\vert \right\vert
_{\mathcal{D}}\leq\delta\}}I_{K}\left(  \phi_{K}\left(  z\right)  \right)  ,
\]
Since $I_{K}\left(  \cdot\right)  $ is a rate function (in particular
$I_{K}\left(  \cdot\right)  $ is lower semicontinuous) we have that%
\[
\sup_{\delta>0}\inf_{\{\phi_{K}\left(  z\right)  :\left\vert \left\vert
\phi_{K}\left(  z\right)  -\phi_{K}\left(  \bar{q}\right)  \right\vert
\right\vert _{\mathcal{D}_{K}}\leq\delta\}}I_{K}\left(  \phi_{K}\left(
z\right)  \right)  =I_{K}\left(  \phi_{K}\left(  \bar{q}\right)  \right)
\]
and then by part i) of Lemma \ref{Iqbar} we conclude that $\sup_{K>0}%
I_{K}\left(  \phi_{K}\left(  \bar{q}\right)  \right)  =I\left(  \bar
{q}\right)  $, thus concluding that $I^{\ast}\left(  \bar{q}\right)  =I\left(
\bar{q}\right)  $ as claimed.
\end{proof}

We finish this section with the proof of Theorem \ref{main thm unbdd}.

\begin{proof}
[Proof of Theorem \ref{main thm unbdd}]All we need to show is that $\bar
{Q}_{\lambda}/\lambda$ and $\tilde{Q}_{\lambda}/\lambda$ are exponentially
equivalent. This follows exactly as in the proof of Corollary \ref{CorMainBdd}
since $\Vert\bar{Q}_{\lambda}-\tilde{Q}_{\lambda}\Vert_{\mathcal{D}}\leq4$.
The measurability issue again is dealt with using separability. The result
then follows by applying Theorem 4.2.13 in \cite{DEMZEI98}.
\end{proof}

\bigskip

\subsection{Proofs of Technical Results}

We now provide the proof of the pending technical results.

\begin{proof}
[Proof of Lemma \ref{LemmaWeakLDPUBD}]This result is a direct application of
part a)\ in Theorem 4.2.16 in \cite{DEMZEI98}.
\end{proof}

\begin{proof}
[Proof of Lemma \ref{LemmaLDPUB1}]This is similar to the case with bounded
service time, but the conditions for tightness are slightly different given
that our domain $\mathcal{D}$ is not compact. We must show that for any
$\eta,\gamma>0$, we can choose small enough $\rho>0$, such that for
$\delta<\rho$,
\[
\frac{1}{\lambda}\log P(w(\tilde{Q}_{\lambda}/\lambda,\delta)>\eta)<-\gamma
\]
when $\lambda$ is large; this part is indeed basically the same as the case
$\mathcal{D}_{M}$. In addition, however, we also must show that for all
$\eta>0$ and every $a>0$ there exists $K>0$ such that%
\begin{equation}
P\left(  \sup_{t\in\lbrack0,T]}\sup_{y\geq K}\tilde{Q}_{\lambda}\left(
t,y\right)  /\lambda>\eta\right)  \leq\exp\left(  -\lambda a\right)  .
\label{CondTight1}%
\end{equation}
Note that
\begin{align}
&  P(w(\tilde{Q}_{\lambda}/\lambda,\delta)>\eta)\nonumber\\
\leq &  P\left(  w(\tilde{Q}_{\lambda}/\lambda,\delta)>\eta,\Vert\tilde
{Q}_{\lambda}/\lambda-\tilde{Q}_{\lambda,K}/\lambda\Vert\leq\frac{\eta}%
{2}\right)  +P\left(  \Vert\tilde{Q}_{\lambda}/\lambda-\tilde{Q}_{\lambda
,K}/\lambda\Vert>\frac{\eta}{2}\lambda\right) \nonumber\\
\leq &  P\left(  w(\tilde{Q}_{\lambda,K}/\lambda,\delta)>\frac{\eta}%
{2}\right)  +P\left(  N_{\lambda}^{\left(  K\right)  }\left(  T\right)
>\lambda\eta/2\right)  , \label{CT2}%
\end{align}
and
\[
P\left(  \sup_{t\in\lbrack0,T]}\sup_{y\geq K}\tilde{Q}_{\lambda}\left(
t,y\right)  /\lambda>\eta\right)  \leq P\left(  N_{\lambda}^{\left(  K\right)
}\left(  T\right)  >\lambda\eta\right)  .
\]
By Lemma \ref{arrivalsK}, for every $\gamma>0$ can choose $K$ large enough
such that
\[
\frac{1}{\lambda}\log P\left(  N_{\lambda}^{\left(  K\right)  }\left(
T\right)  >\lambda\eta/2\right)  <-2\gamma.
\]
for all $\lambda$ large enough. So, condition (\ref{CondTight1}) is enforced
and the second term in the sum in (\ref{CT2}) is also appropriately
controlled. Now, by a similar argument as in Lemma \ref{tightness} in the
previous section, for a chosen $K$, we have, for all small enough $\delta$,
\[
\frac{1}{\lambda}\log P\left(  w(\tilde{Q}_{\lambda,K}/\lambda,\delta
)>\frac{\eta}{2}\right)  <-2\gamma.
\]
for large enough $\lambda$. In sum, we get
\[
\frac{1}{\lambda}\log P(w(\tilde{Q}_{\lambda}/\lambda,\delta)>\eta)<-\gamma
\]
for large enough $\lambda$. Therefore, exponential tightness follows. It
follows immediately that a weak large deviations principle and exponential
tightness implies a full large deviations principle. The goodness of the rate
function then is a consequence of exponential tightness together with the weak
large deviations principle; see Lemma 1.2.18, p. 8, part b) of \cite{DEMZEI98}.
\end{proof}

\begin{proof}
[Proof of Lemma \ref{Iqbar}]We start with part i), assuming that $I(\bar
{q})<\infty$. Since
\[
\frac{\partial^{2}}{\partial t\partial y}\phi_{K}(\bar{q})(t,y)=\frac
{\partial^{2}}{\partial y\partial t}\phi_{K}(\bar{q})(t,y)=\frac{\partial^{2}%
}{\partial t\partial y}\bar{q}(t,y)1_{\{y\leq K+t\}},
\]
we have immediately that $I(\phi_{K}(\bar{q}))=I_{K}(\phi_{K}(\bar{q}%
))=I_{K}(\bar{q})$. It is obvious that $I_{K}(\bar{q})$ is non-decreasing in
$K$ and that $I_{K}\left(  \bar{q}\right)  \leq I(\bar{q})$. On the other
hand, there exists $\theta^{n}\in C_{b}[0,T]\times\lbrack0,\infty)$ (the space
of bounded and continuous functions on $[0,T]\times\lbrack0,\infty)$) such
that
\[
I^{n}(\bar{q}):=\int_{0}^{T}\left[  \int_{t}^{\infty}\theta^{n}(t,y-t)\left(
-\frac{\partial^{2}}{\partial t\ \partial y}\bar{q}(t,y)\right)  dy-\psi
_{N}\left(  \log\left(  \int_{0}^{\infty}e^{\theta^{n}(t,y)}dF(y)\right)
\right)  \right]  dt
\]
converges to $I(\bar{q})$ as $n\rightarrow\infty$. Since $I(\bar{q})<\infty$,
it follows easily that $\partial^{2}\bar{q}(\cdot)/\partial t\partial y$ is
integrable over $\mathcal{D}$. Therefore, given that $\theta^{n}(\cdot)$ is
bounded,%
\[
\int_{0}^{T}\int_{K}^{\infty}\theta^{n}(t,y-t)\left(  -\frac{\partial^{2}%
}{\partial t\ \partial y}\bar{q}(t,y)\right)  dydt\rightarrow0
\]
as $K\rightarrow\infty$. Besides, for given $n$, as $\psi_{N}$ is uniformly
continuous on the bounded set $[-M_{n},M_{n}]$ where $M^{n}=\sup|\theta
^{n}(t,y)|<\infty$, we have that
\[
\psi_{N}\left(  \log\left(  \bar{F}(K)+\int_{0}^{K}e^{\theta^{n}%
(t,y)}dF(y)\right)  \right)
\]
converges to
\[
\psi_{N}\left(  \log\left(  \int_{0}^{\infty}e^{\theta^{n}(t,y)}dF(y)\right)
\right)
\]
uniformly on $t\in\lbrack0,T]$. In sum, \underline{$\lim$}$_{K\rightarrow
\infty}I_{K}^{n}(\bar{q})=I^{n}(\bar{q})$ as $K\rightarrow\infty$. Therefore,
there exists $K_{n}$ such that $I_{K_{n}}^{n}(\bar{q})\geq I^{n}(\bar{q}%
)-1/n$. Recall that $I_{K_{n}}^{n}(\bar{q})\leq I_{K_{n}}(\bar{q})$ and
consequently we obtain
\[
I^{n}(\bar{q})-\frac{1}{n}\leq I_{K_{n}}(\bar{q})\leq I(\bar{q}).
\]
Since $I_{K}(\bar{q})$ increases in $K$, we have $I_{K}(q)\nearrow I(q)$ as
claimed.\newline For part ii), when $I(\bar{q})=\infty$, there are two cases:
a) $\bar{q}$ is not absolutely continuous, and b) $\bar{q}$ is absolutely
continuous. Case b) in turn is divided into two subcases: b.1) $\partial
^{2}\bar{q}(t,y)/\partial t\ \partial y$ is not integrable over $\mathcal{D}$,
and b.2) $\partial^{2}\bar{q}(t,y)/\partial t\ \partial y$ is integrable over
$\mathcal{D}$. We shall proceed to analyze all these cases now. For Case a).
We can construct a projection $p_{\kappa}$ with $I_{K}(p_{\kappa}(\bar{q}))>M$
as we did in the proof of Lemma \ref{projection limit}. For Case b), we have
that $\bar{q}$ is absolutely continuous, but%
\[
\sup_{\theta(\cdot,\cdot)\in C_{b}[0,T]\times\lbrack0,\infty)}\int_{0}%
^{T}\left[  \int_{0}^{\infty}\theta(t,y)\left(  -\frac{\partial^{2}}{\partial
t\ \partial y}\bar{q}(t,y+t)\right)  dy-\psi_{N}\left(  \log\left(  \int
_{0}^{\infty}e^{\theta(t,y)}dF(y)\right)  \right)  \right]  dt=\infty,
\]
so we proceed to study case b.1): Assume that $\partial^{2}\bar{q}%
(t,y)/\partial t\ \partial y$ is not integrable on $\mathcal{D}$. We shall
assume that
\begin{equation}
\int_{0}^{T}\int_{t}^{\infty}\left(  -\frac{\partial^{2}}{\partial t\ \partial
y}\bar{q}(t,y)\right)  ^{+}dydt=\infty\label{Expression2}%
\end{equation}
(if this integral is finite, then integral of the negative part must diverge
and the analysis that follows next is identical). As in the proof of Lemma
\ref{projection limit}, given a projection $\kappa$ induced by $0\leq
t_{1}<t_{2}<...<t_{m}\leq T$ and $0\leq y_{0}<y_{1}<...<y_{n+1}$, define%
\begin{align}
\delta_{ij}\left(  \kappa\right)   &  :=\bar{q}(t_{i},y_{j-1})-\bar{q}%
(t_{i},y_{j})-\bar{q}(t_{i-1},y_{j-1})+\bar{q}(t_{i-1},y_{j})\nonumber\\
&  =-\int_{t_{i-1}}^{t_{i}}\int_{y_{j-1}}^{y_{j}}\frac{\partial^{2}}{\partial
y\partial t}\bar{q}(t,y)dydt, \label{Expression1}%
\end{align}
as long as $y_{j-1}\geq t_{i-1}$ (otherwise, if $y_{j-1}<t_{i-1}$,
$\delta_{ij}\left(  \kappa\right)  =0$). Then, from (\ref{Expression2}) and
(\ref{Expression1}), it follows easily that for any $M$, there exists a
partition $\kappa$ such that
\[
\sum_{i,j}\delta_{ij}\left(  \kappa\right)  >M+T\psi_{N}(1).
\]
Therefore, for large enough $K$,
\[
I_{K}(p_{\kappa}\left(  \bar{q}\right)  )\geq\sum_{i,j}\delta_{ij}\left(
\kappa\right)  -T\psi_{N}(1)>M.
\]
Now, for case b.2) suppose that $\partial^{2}\bar{q}(t,y)/\partial t\ \partial
y$ is integrable on $\mathcal{D}$. We can find $\theta(t,y)$ such that
\[
3M<\int_{0}^{T}\int_{t}^{\infty}\theta(t,y-t)\left(  -\frac{\partial^{2}%
}{\partial t\ \partial y}\bar{q}(t,y)\right)  dy-\psi_{N}\left(  \log\left(
\int_{0}^{\infty}e^{\theta(t,y)}dF(y)\right)  \right)  dt<\infty.
\]
Following the same line of reasoning as in the proof of part i) we can
conclude that there exists $K>0$ such that $I_{K}(\bar{q})>2M$. According to
Dawson-Gartner Theorem, $I_{K}(\bar{q})=\sup I_{K}(p_{\kappa}(\bar{q}))$ where
the supremum is taken over all projections restricted to $\{t\in[0,T],0\leq
y\leq t+K\}$. As a result, there exists some projection $p_{\kappa}$ such that
$I_{K}(p_{\kappa}(\bar{q}))>M$ and hence we are done. \newline Now we turn to
part iii). So far we proved that for any $\bar{q}$ and $M>0$, we can find a
projection $p_{\kappa}$ such that $I_{K}(p_{\kappa}(\bar{q}))>2M$. As
discussed in the proof of Lemma \ref{projection limit}, we have
\[
\sup_{\{\theta_{ij}:1\leq i\leq m,1\leq j\leq n+1\}}\sum_{i=1}^{m}\sum
_{j=1}^{n+1}\theta_{ij}\delta_{ij}(\kappa)-\sum_{i=1}^{m}\int_{t_{i-1}}%
^{t_{i}}\psi_{N}^{(K)}\left(  \log\sum_{j=1}^{n+1}e^{\theta_{ij}}%
P(y_{j-1}-u<V_{1}^{(K)}\leq y_{j}-u)\right)  du>2M.
\]
where $\delta_{ij}(\kappa)$ is induced by the projection $p_{\kappa}$. By
definition, there exists some $\theta_{ij}$ such that
\[
\sum_{i=1}^{m}\sum_{j=1}^{n+1}\theta_{ij}\delta_{ij}(q)-\sum_{i=1}^{m}%
\int_{t_{i-1}}^{t_{i}}\psi_{N}^{(K)}\left(  \log\sum_{j=1}^{n+1}e^{\theta
_{ij}}P(y_{j-1}-u<V_{1}^{(K)}\leq y_{j}-u)\right)  du>3M/2.
\]
For all $\varepsilon>0$ and $\hat{q}\in B_{\varepsilon}(p)$, we have
\[
|\delta_{ij}(q)-\delta_{ij}(\hat{q})|\leq4\varepsilon.
\]
Hence for $\varepsilon=M/(8\sum_{i,j}|\theta_{ij}|)$ and all $\hat{q}\in
B_{\varepsilon}(q)$, we have
\begin{align*}
I(p_{\kappa}(\hat{q}))  &  \geq\sum_{i=1}^{m}\sum_{j=1}^{n+1}\theta_{ij}%
\delta_{ij}(\hat{q})-\sum_{i=1}^{m}\int_{t_{i-1}}^{t_{i}}\psi_{N}^{(K)}\left(
\log\sum_{j=1}^{n+1}e^{\theta_{ij}}P(y_{j-1}-u<V_{1}^{(K)}\leq y_{j}%
-u)\right)  du\\
&  \geq\sum_{i=1}^{m}\sum_{j=1}^{n+1}\theta_{ij}\delta_{ij}(q)-\sum_{i=1}%
^{m}\int_{t_{i-1}}^{t_{i}}\psi_{N}^{(K)}\left(  \log\sum_{j=1}^{n+1}%
e^{\theta_{ij}}P(y_{j-1}-u<V_{1}^{(K)}\leq y_{j}-u)\right)  du-4\varepsilon
\sum_{i=1}^{m}\sum_{j=1}^{n+1}|\theta_{ij}|\\
&  >M.
\end{align*}
Thus we conclude the result.
\end{proof}

\begin{proof}
[Proof of Lemma \ref{arrivalsK}]Let $N_{\lambda}^{\left(  K\right)  }(T)$ be
the total number of arrivals from time 0 up to $T$ with service time longer
than $K$, under the $\lambda$-scaled system. Then following \cite{Glynn95a}
(or as in the proof of Lemma \ref{tightness}) we have that%
\[
\lim_{\lambda\rightarrow\infty}\frac{1}{\lambda}\log Ee^{\theta N_{\lambda
}^{\left(  K\right)  }(T)}=T\psi_{N}(\log(e^{\theta}\bar{F}(K)+F(K))).
\]
Chernoff's bound yields%
\[
P(N_{\lambda}^{\left(  K\right)  }(T)>\lambda\varepsilon)\leq\exp
\{-\theta\lambda\varepsilon+\lambda T\psi_{N}(\log(e^{\theta}\bar
{F}(K)+F(K)))+o(\lambda)\}.
\]
Hence
\begin{align*}
&  \overline{\lim}_{\lambda\rightarrow\infty}\frac{1}{\lambda}\log P_{\lambda
}(N_{\lambda}^{\left(  K\right)  }(T)>\lambda\varepsilon)\\
\leq &  -\theta\varepsilon+T\psi_{N}(\log(e^{\theta}\bar{F}(K)+F(K)))
\end{align*}
Letting $K\rightarrow\infty$ gives
\[
\lim_{K\rightarrow\infty}\overline{\lim}_{\lambda\rightarrow\infty}\frac
{1}{\lambda}\log P_{\lambda}(N_{\lambda}^{\left(  K\right)  }(T)>\lambda
\varepsilon)\leq-\theta\varepsilon
\]
Since $\theta$ can be arbitrarily large, the result follows.
%Let $\psi(\theta)$ be the log moment generating function for the number of total arrivals up to time $T$ without scaling. By our assumption, $\psi(\theta)$ is well defined for $\theta$ in the neighborhood of the origin. Let $N(K)$ be the number of total arrivals with service time longer than $K$ up to time $T$. Since the service time is independent with the arrival process, one can compute:
%$$
%E_s[\exp\{\theta N(K)\}]=\exp\{s\cdot \psi(\log{(F(K)+(1-F(K))e^{\frac{\theta}{s}})})\}
%$$
%For any fixed $n$ and $\theta$, when K is large enough,$\log{F(K)+(1-F(K))e^{\frac{\theta}{s}}}$ is close to the origin thus lies in the domain of $\psi$. Hence the following is well defined:
%\begin{align*}
%&P_s\{\text{More than } s\epsilon \text{ arrivals up to time T have service time longer than K}\}\\
%\leq {} &\exp{s[\psi(\log{(F(K)+(1-F(K)e^{\frac{\theta}{s}}))})-\theta\epsilon]}\\
%\leq {} &\exp(-n\epsilon\theta/2)\text{ when }K\text{ is large enough}.
%\end{align*}
%Since $\theta$ can be arbitrarily large, the result follows.

\end{proof}

\bigskip

\section{Examples\label{SectionExamples}}

This section is devoted to two examples that apply the large deviations
principle that we have developed in the previous sections. The first example
is on the most likely path to overflow in a loss queue, while the second
example is on the ruin of a large life insurance portfolio that embeds an
infinite server queue with service cost. \bigskip

\noindent\textbf{Example 1.} (\emph{Finite-horizon maximum of queue length
process for $M/G/\infty$}) Consider an $M/G/\infty$ queue with Poisson
arrivals with rate $\lambda$. Suppose that the service times have a density
$f\left(  \cdot\right)  $ with respect to the Lebesgue measure. The system
initially starts empty.

We want to find the optimal large deviations sample path to attain the event
$\{\max_{0\leq t\leq T}\bar{Q}_{\lambda}(t,t)/\lambda\geq x\}$, for fixed $T$
and $x$, as $\lambda\rightarrow\infty$; this event corresponds precisely to
the event of observing a loss in a queue with $\lambda x$ servers, no waiting
room, starting empty. Note that $g\left(  \bar{q}\right)  :=\max_{0\leq t\leq
T}\bar{q}(t,t)$ is a continuous function under the uniform norm, so the
contraction principle is directly applicable.

We impose the condition that $\int_{0}^{T}\bar{F}(t)dt<x$. This condition
implies that the probability for the queue to reach $\lambda x$ decreases
exponentially fast as $\lambda\rightarrow\infty$ (Such condition will be clear
when we solve the constrained optimization in a moment).

To proceed, let us first observe that $\psi_{N}(\theta)=e^{\theta}-1$. The
maximization problem in \eqref{R} can be solved and the rate function is
immediately recognized as
\[
\int_{0}^{T}\int_{t}^{\infty}\left(  -\frac{\partial^{2}}{\partial t\partial
y}\bar{q}(t,y)\right)  \left(  \log\left(  -\frac{\partial^{2}}{\partial
t\partial y}\bar{q}(t,y)/f(y-t)\right)  -1\right)  dydt+T,
\]
which is easily seen to be a convex function of $\partial^{2}\bar
{q}(t,y)/\partial t\partial y$. To find the optimal sample path amounts to
solving the minimization problem
\begin{equation}%
\begin{array}
[c]{ll}%
\text{min} & \int_{0}^{T}\int_{t}^{\infty}\left(  -\frac{\partial^{2}%
}{\partial t\partial y}\bar{q}(t,y)\right)  \left(  \log\left(  -\frac
{\partial^{2}}{\partial t\partial y}\bar{q}(t,y)/f(y-t)\right)  -1\right)
dydt+T\\
\text{subject to} & \max_{0\leq s\leq T}\int_{0}^{s}\int_{s}^{\infty}\left(
-\frac{\partial^{2}}{\partial t\partial y}\bar{q}(t,y)\right)  dydt\geq x,
\end{array}
\label{min problem}%
\end{equation}
which is a convex optimization problem. The quantity $\int_{0}^{s}\int
_{s}^{\infty}\left(  -\partial^{2}\bar{q}(t,y)/\partial t\partial y\right)
dydt$ is equal to $\bar{q}(s,s)$ when $\bar{q}$ is absolutely continuous, and
$\bar{q}(s,s)$ represents the scaled queue length process at time $s$.

To solve \eqref{min problem}, we first consider a fixed $s$ in the constraint
and then optimize over $s$. When considering $s$ fixed we replace the
constraint in \eqref{min problem} by $\bar{q}(s,s)\geq x$. Under this new
constraint, it suffices to look at the time 0 to $s$ in the objective
function, that is, we now solve%
\begin{equation}%
\begin{array}
[c]{ll}%
\text{min} & \int_{0}^{s}\int_{t}^{\infty}\left(  -\frac{\partial^{2}%
}{\partial t\partial y}\bar{q}(t,y)\right)  \left(  \log\left(  \frac
{-\frac{\partial^{2}}{\partial t\partial y}\bar{q}(t,y)}{f(y-t)}\right)
-1\right)  dydt+s\\
\text{subject to} & \int_{0}^{u}\int_{u}^{\infty}\left(  -\frac{\partial^{2}%
}{\partial t\partial y}\bar{q}(t,y)\right)  dydt\geq x.
\end{array}
\label{easy min problem}%
\end{equation}
The solution to \eqref{min problem} is then the optimal sample path from
\eqref{easy min problem}, among $0\leq u\leq T$, that gives the smallest objective.

We now consider \eqref{easy min problem}. Introducing a Lagrange multiplier
$\mu\geq0$, we minimize
\[
\int_{0}^{u}\left(  \int_{u}^{\infty}\left(  -\frac{\partial^{2}}{\partial
t\partial y}\bar{q}(t,y)\right)  \left(  \log\left(  \frac{-\frac{\partial
^{2}}{\partial t\partial y}\bar{q}(t,y)}{f(y-t)}\right)  -1\right)  dy-\mu
\int_{s}^{\infty}\left(  -\frac{\partial^{2}}{\partial t\partial y}\bar
{q}(t,y)\right)  dy\right)  dt.
\]
By a formal application of Euler-Lagrange equations, we differentiate the
integrand with respect to $-\partial^{2}\bar{q}(t,y)/\partial t\partial y$ to
get
\[
\left\{
\begin{array}
[c]{ll}%
\log\left(  \frac{-\partial^{2}\bar{q}(t,y)/\partial t\partial y}%
{f(t-y)}\right)  =0 & \text{\ for\ }t\leq y\leq u\\
\log\left(  \frac{-\partial^{2}\bar{q}(t,y)/\partial t\partial y}%
{f(t-y)}\right)  -\mu=0\text{\ } & \text{\ for\ }y>u
\end{array}
\right.
\]
which gives
\[
-\frac{\partial^{2}}{\partial t\partial y}\bar{q}(t,y)=\left\{
\begin{array}
[c]{ll}%
f(y-t) & \text{\ for\ }t\leq y\leq u\\
\mu f(y-t) & \text{\ for\ }y>u
\end{array}
\right.
\]
for some $\mu\geq1$ (we replace $e^{\mu}$ by another dummy $\mu$ for
convenience). Complementary slackness then implies
\[
\int_{0}^{u}\int_{u}^{\infty}(-\partial^{2}\bar{q}(t,y)/\partial t\partial
y)dydt=\int_{0}^{u}\int_{u}^{\infty}\mu f(y-t)dydt=x,
\]
which in turn gives
\[
\mu=\frac{x}{\int_{0}^{u}\bar{F}(t)dt}%
\]
(note that we have assumed $\int_{0}^{u}\bar{F}(t)dt<x$ and so the condition
$\mu>1$ is satisfied). As a result
\[
-\frac{\partial^{2}}{\partial t\partial y}\bar{q}(t,y)=\left\{
\begin{array}
[c]{ll}%
f(y-t) & \text{\ for\ }t\leq y\leq u\\
\frac{xf(y-t)}{\int_{0}^{u}\bar{F}(t)dt} & \text{\ for\ }y>u.
\end{array}
\right.
\]

The optimal sample path $\bar{q}(t,y)$ leading to the constraint $\bar
{q}(u,u)\geq x$ is given by
\begin{align*}
&  \bar{q}(t,y)= \int_{0}^{t}\int_{y}^{\infty}\left(  -\frac{\partial^{2}%
}{\partial t\partial y}\bar{q}(s,w)\right)  dwds\\
&  =\int_{0}^{t}\left(  \int_{y\wedge u}^{u}f(w-s)dw+\int_{u\vee y}^{\infty
}\frac{xf(w-s)}{\int_{0}^{u}\bar{F}(r)dr}dw\right)  ds
\end{align*}
Transforming into $q(t,y)=\bar{q}(t,y+t)$ and some simple calculus gives the
optimal sample path
\[
q(t,y)=\int_{y}^{y+t}\bar{F}(s)ds-\int_{u-t}^{u}\bar{F}(s)ds+\frac{x\int
_{u-t}^{u}\bar{F}(s)ds}{\int_{0}^{u}\bar{F}(r)dr},\text{ for }y+t\leq u,
\]
and
\[
q(t,y)=\frac{x\int_{y}^{y+t}\bar{F}(s)ds}{\int_{0}^{u}\bar{F}(r)dr},\text{ for
}y+t>u.
\]

In connection to our discussion about the direct rate function representation
in terms of $Q_{\lambda}/\lambda$ (see equation (\ref{Qrate}), in Section
\ref{SectionGuessing}), one can check that $\partial{q}(t,y)/\partial y$ is
not continuous on the line $y=t$ and therefore $\partial^{2}q(t,y)/\partial
y^{2}$ does not exists though $I(\bar{q})$ is finite.

Note also that the objective is
\begin{align}
&  \int_{0}^{u}\left(  -\int_{t}^{u}f(y-t)dy+\int_{u}^{\infty}\frac
{xf(y-t)}{\int_{0}^{u}\bar{F}(t)dt}\left(  \log\left(  \frac{x}{\int_{0}%
^{u}\bar{F}(t)dt}\right)  -1\right)  dy\right)  dt+T\nonumber\\
&  =\int_{0}^{u}\bar{F}(t)dt+\left(  \log\left(  \frac{x}{\int_{0}^{s}\bar
{F}(t)dt}\right)  -1\right)  x. \label{rate Poisson}%
\end{align}

This is the rate function corresponding to the probability $P(\bar{Q}%
_{\lambda}(u,u)\geq\lambda x)=P(Q_{\lambda}(u,0)\geq\lambda x)$, where
$Q_{\lambda}(u,0)$ is the queue length at time $u$. This rate of decay is
consistent with direct calculation using the fact that $Q_{\lambda}(u,0)$ is a
Poisson random variable with rate $\lambda\int_{0}^{u}\bar{F}(t)dt$, which
gives
\[
P(Q(s)>\lambda x)=\sum_{n\geq\lambda x}e^{-\lambda\int_{0}^{u}\bar{F}%
(t)dt}\left(  \lambda\int_{0}^{u}\bar{F}(t)dt\right)  ^{n}/n!.
\]
For a consistency check, our result here can in fact recover the large
deviations for the arrival process itself. If one changes the constraint in
\eqref{easy min problem} to $\bar{q}(u,0)=\int_{0}^{u}\int_{0}^{\infty}\left(
-\frac{\partial^{2}}{\partial t\partial y}\bar{q}(t,y)\right)  dydt\geq x$,
the optimal value of \eqref{easy min problem} then becomes $x[\log(x/s)-1]+s$,
which coincides with the exponential decay rate of $P($Poisson$(\lambda
s)>\lambda x)$ as $\lambda\rightarrow\infty$.

Figures \ref{FigureOver1} and \ref{FigureOver2} illustrate both the law of
large numbers (i.e. the typical path) and the most likely path to the overflow
event $\max_{0\leq u\leq T}\bar{Q}_{\lambda}(u,u)/\lambda\geq x$ for $T=1$,
$x=2$. The underlying service time distribution is uniform in the interval
$[0,1]$. We can see that the optimal path of $Q(t,y)$ increases gradually over
time to overflow at time 1.

\begin{figure}[th]
\centering
\subfloat[][$Q(t,y)$ for the most likely path to overflow (surface)]{
		\includegraphics[scale=0.5]{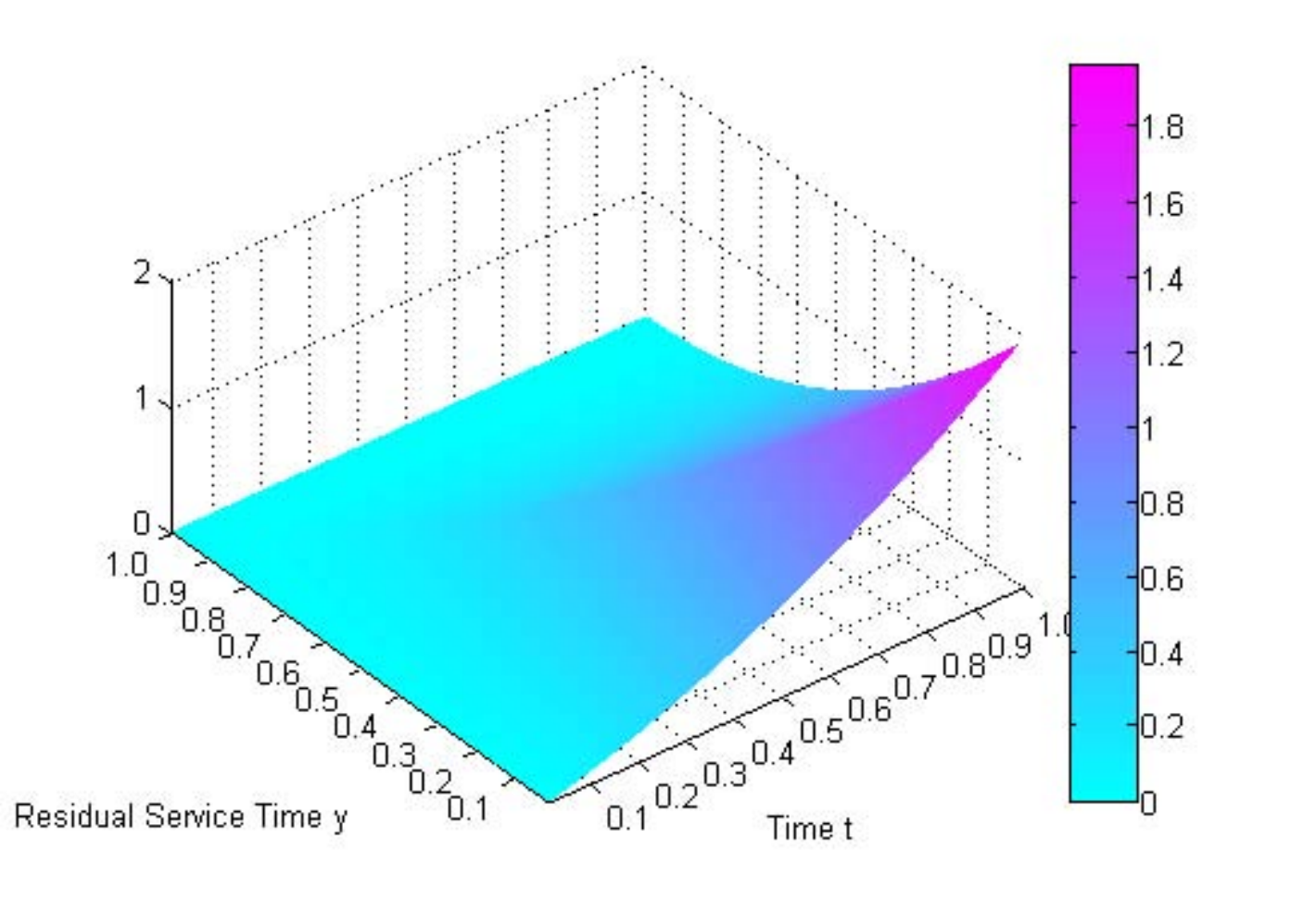}
		\label{Over1a}
} \subfloat[][$Q(t,y)$ for the unconditional path (surface)]{
\includegraphics[scale=0.25]{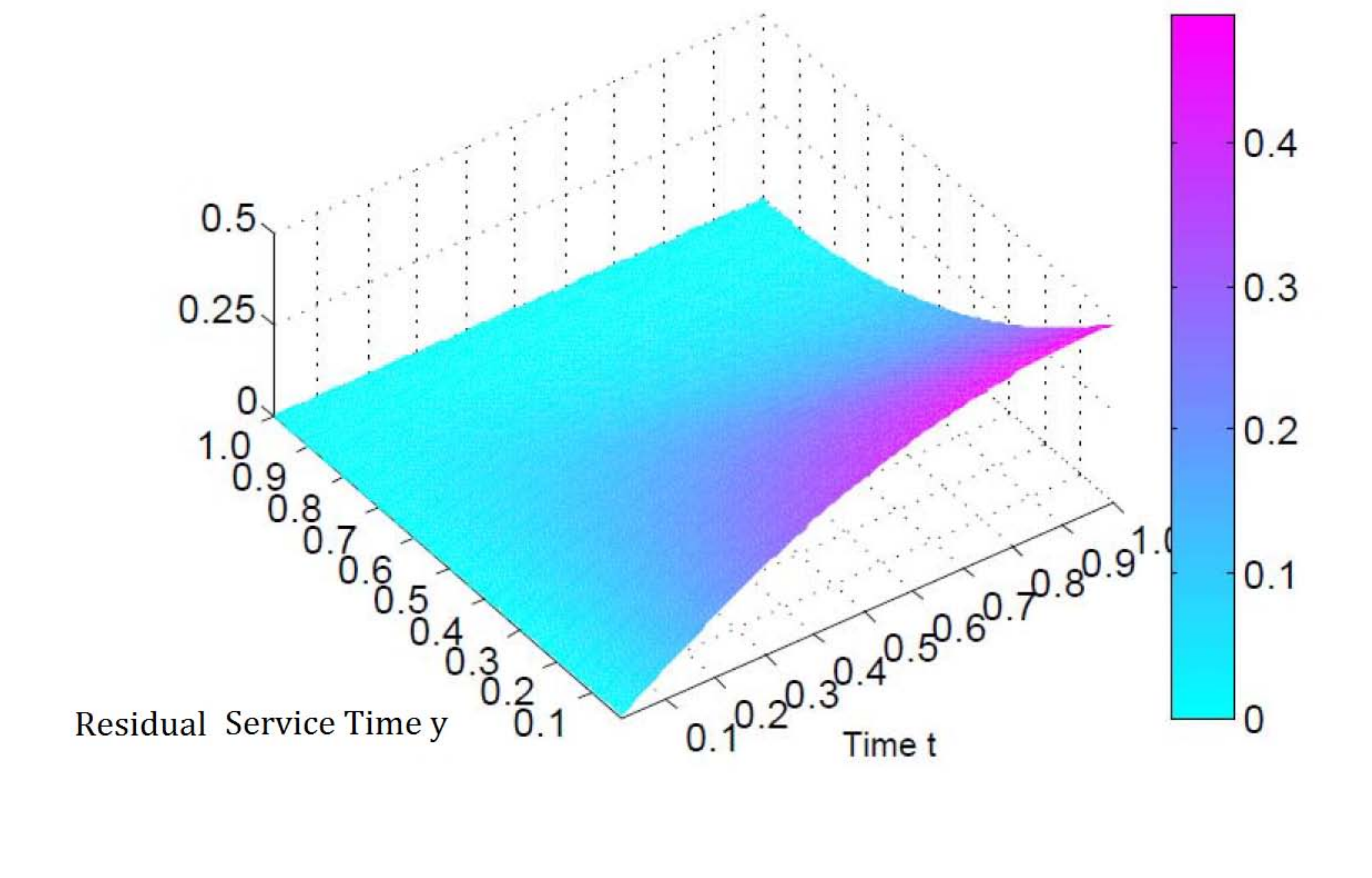}
		\label{Over1b}
}\caption{Surface plots of the asymptotic surface $Q_{\lambda}(t,y)/\lambda$,
as $\lambda$ increases, both an optimal (most likely) path leading to
overflow, and the unconditional path. }%
\label{FigureOver1}%
\end{figure}

\begin{figure}[th]
\centering
\subfloat[][$Q(t,y)$ for the most likely path to overflow (contour)]{
		\includegraphics[scale=0.4]{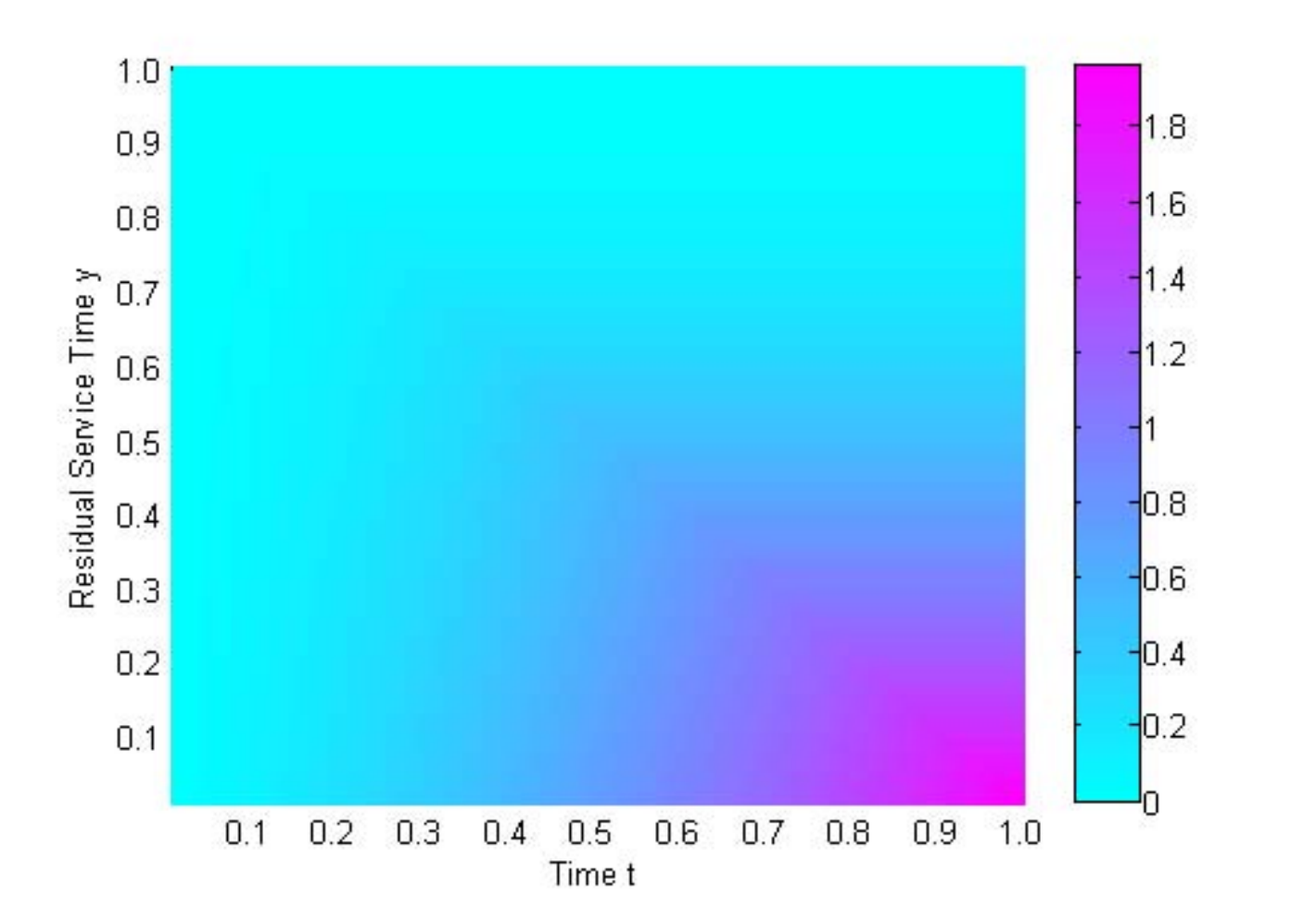}
		\label{over2a}
} \subfloat[][$Q(t,y)$ for the unconditional path (contour)]{
\includegraphics[scale=0.3]{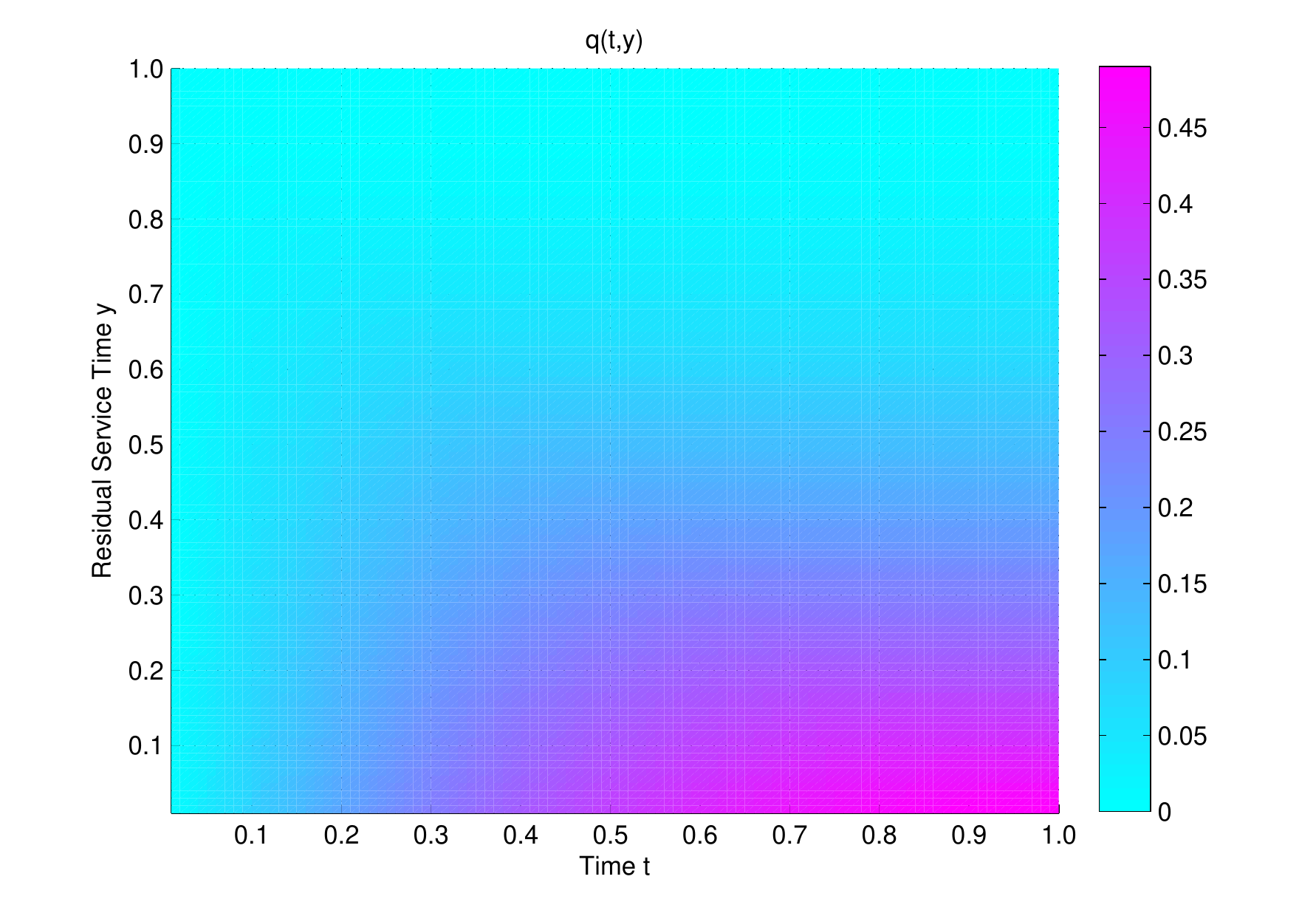}
		\label{over2b}
}\caption{Contour plots of the asymptotic surface $Q_{\lambda}(t,y)/\lambda$,
as $\lambda$ increases, both an optimal (most likely) path leading to
overflow, and the unconditional path.}%
\label{FigureOver2}%
\end{figure}

It is easy to see that since we assume $\int_{0}^{T}\bar{F}(u)du<x$, the rate
function \eqref{rate Poisson} is non-decreasing in $s$, and as a result an
optimal time horizon is $T$. If the service time has bounded support $[0,M]$
with $M<T$, then the selection of any time $s\in\lbrack M,T]$ will give an
optimal sample path.
%To find the optimal $u$, denote $G(u)=\int_{0}^{u}\bar{F}(t)dt$ which is a
%non-decreasing function. By differentiating \eqref{rate Poisson} it is easy to
%see that the optimal $u$ is $G^{-1}(x)\wedge T$. \bigskip
\bigskip

\textbf{Example 2.} (\emph{Insurance risk process}) The net reserve of a life
insurance company consists of the premium collected from policyholders,
deducted by the benefit paid to policyholders in the event of deaths; often
all these payments are discounted at zero in order to recognize the value of
money in time. When policyholders arrive at the insurance company over time
(an arrival is interpreted as the moment when a contract is signed), one can
model the net assets of the insurer as a function of the underlying arrivals
and death events of policyholders. Specifically, we shall assume that
policyholders arrive according to a Poisson process with rate $\lambda$, and
that the time-until-death upon arrival of the policyholders are independent
and identically distributed. Moreover, we assume that the time-until-death
upon arrival has density $f(\cdot)$, distribution function $F(\cdot)$, and
tail distribution $\bar{F}(\cdot)$. The time-until-death in this setting can
be thought as the service time in the queueing context. We shall assume
without loss of generality that the initial net reserve of the company is zero.

It is often more convenient to work with the negative net reserve process,
also known as the \textit{aggregate loss process}, defined as the total
benefit that the insurer has paid up to time $t$, minus the total premium
received up to time $t$. For a policyholder who arrives at time $A_{i}$, and
who dies at time $A_{i}+V_{i}<t$, the payoff by the insurer, discounted at
time zero, is denoted $h_{1}(A_{i},A_{i}+V_{i})$; here $A_{i}$ and $V_{i}$ are
the arrival time and time-until-death at the time of arrival of the
policyholder. This quantity, $h_{1}(s,y)$, for $y\geq s$, captures the benefit
paid at $y$ minus the accumulated premium collected from time $s$ to $y$. On
the other hand, for a policyholder who has arrived prior to $t$, at time
$A_{i}$, and who is still alive at time $t$, the payoff from the insurer to
the policyholder is $h_{2}(A_{i},t)$ (typically $h_{2}\left(  A_{i},t\right)
$ will be negative as it represents premium that are paid to the insurer, so
the payoff is negative). Here $h_{2}(s,t)$, for $t\geq s$, captures the
premium accumulated from $s$ up to the present time $t$, discounted to obtain
the net present value at time zero.

Consider, for instance, the setting of whole life insurance policies. That is,
policies that pay a benefit $b$ to the family of the policyholder, at the time
of eventual death, in exchange of a premium which is paid at rate $p$
continuously in time during all the time the policy was held, from arrival, up
until the time of death. If the interest rate (or force of interest as it is
known in the insurance setting) is constant equal to $\delta>0$, then
\[
h_{1}(s,y)=be^{-\delta y}-\int_{s}^{y}pe^{-\delta r}dr=be^{-\delta y
}-p(e^{-\delta s}-e^{-\delta y})/\delta,
\]
and
\[
h_{2}(s,t)=-\int_{s}^{t}pe^{-\delta r}dr=-p(e^{-\delta s}-e^{-\delta
t})/\delta.
\]

%$be^{-\delta A_{i}+V_{i}}-p(e^{-\delta
%A_{i}}-e^{-\delta(A_{i}+V_{i})})/\delta$, where   the
%payoff is $p(e^{-\delta A_{i}}-e^{-\delta t})/\delta$.

The aggregate loss process, $S_{\lambda}\left(  t\right)  $, is represented as
the net present value of the sum of the payoffs for all policyholders who
arrive before $t$ and it is given by%

\begin{align*}
S_{\lambda}\left(  t\right)   &  =\sum_{i=1}^{N_{\lambda}(t)}\left(  I\left(
A_{i}+V_{i}\leq t\right)  h_{1}(A_{i},A_{i}+V_{i})+I(A_{i}+V_{i}>t)h_{2}%
(A_{i},t)\right) \\
&  =\int_{0}^{t}\int_{s}^{t}h_{1}(s,y)d\bar{Q}_{\lambda}(ds,dy)+\int_{0}%
^{t}\int_{t}^{\infty}h_{2}(s,y)d\bar{Q}_{\lambda}(ds,dy).
\end{align*}
We claim that $S_{\lambda}\left(  \cdot\right)  $ is a continuous function of
$\bar{Q}_{\lambda}\left(  \cdot\right)  $ under the uniform topology on
$\mathcal{D}$. In order to see this, define $D_{\lambda}\left(  t\right)  $ to
be the number of departures by time $t$, that is,
\begin{equation}
D_{\lambda}\left(  t\right)  =N_{\lambda}\left(  t\right)  -\bar{Q}_{\lambda
}\left(  t,t\right)  =\bar{Q}_{\lambda}\left(  t,0\right)  -\bar{Q}_{\lambda
}\left(  t,t\right)  . \label{Rep0}%
\end{equation}
Note that $D_{\lambda}\left(  \cdot\right)  $ and $N_{\lambda}\left(
\cdot\right)  $ are clearly continuous functions of $\bar{Q}_{\lambda}\left(
\cdot\right)  $. Moreover, we have that
\[
\sum_{i=1}^{N_{\lambda}(t)}I\left(  A_{i}+V_{i}\leq t\right)  h_{1}%
(A_{i},A_{i}+V_{i})=\int_{0}^{t}\int_{0}^{t}h_{1}\left(  s,u\right)
D_{\lambda}\left(  du\right)  N_{\lambda}\left(  ds\right)  ,
\]
and therefore%
\[
S_{\lambda}\left(  t\right)  =\int_{0}^{t}\int_{0}^{t}h_{1}\left(  s,u\right)
D_{\lambda}\left(  du\right)  N_{\lambda}\left(  ds\right)  +\int_{0}^{t}%
\int_{t}^{\infty}h_{2}(s,y)\bar{Q}_{\lambda}(ds,dy).
\]
Now, integration by parts shows that%
\begin{align}
&  \int_{0}^{t}\int_{0}^{t}h_{1}\left(  s,u\right)  D_{\lambda}\left(
du\right)  N_{\lambda}\left(  ds\right) \nonumber\\
&  =\int_{0}^{t}\int_{0}^{t}\frac{\partial^{2}}{\partial u\partial s}%
h_{1}\left(  s,u\right)  D_{\lambda}\left(  u\right)  N_{\lambda}\left(
s\right)  dsdu-N_{\lambda}\left(  t\right)  \int_{0}^{t}D_{\lambda}\left(
u\right)  \frac{\partial}{\partial u}h_{1}\left(  t,u\right)  du\nonumber\\
&  -D_{\lambda}\left(  t\right)  \int_{0}^{t}\frac{\partial}{\partial s}%
h_{1}\left(  s,t\right)  N_{\lambda}\left(  s\right)  ds+D_{\lambda}\left(
t\right)  N_{\lambda}\left(  t\right)  h_{1}\left(  t,t\right)  . \label{Rep1}%
\end{align}
A similar development yields%
\begin{align}
\int_{0}^{t}\int_{t}^{\infty}h_{2}(s,y)\bar{Q}_{\lambda}(ds,dy)  &  =\int
_{t}^{\infty}\int_{0}^{t}\bar{Q}_{\lambda}(s,y)\frac{\partial^{2}h_{2}\left(
s,y\right)  }{\partial s\partial y}dsdy-\int_{t}^{\infty}\bar{Q}_{\lambda
}(t,y)\frac{\partial h_{2}\left(  t,y\right)  }{\partial y}dy\nonumber\\
&  +\int_{0}^{t}\frac{\partial h_{2}(s,t)}{\partial s}\bar{Q}_{\lambda
}(s,t)ds-\bar{Q}_{\lambda}(t,t)h_{2}\left(  t,t\right)  . \label{Rep2}%
\end{align}
It is now not difficult to see from (\ref{Rep1}) and (\ref{Rep2}) that indeed
$S_{\lambda}\left(  \cdot\right)  $ is a continuous function of $Q_{\lambda
}\left(  \cdot\right)  $ in the uniform topology on $[0,T]\times
\lbrack0,\infty)$.

%where $h_{1}(s,y)=be^{-\delta y}-p(e^{-\delta s}-e^{-\delta y})/\delta$ and
%$h_{2}(s,t)=p(e^{-\delta s}-e^{-\delta t})/\delta$.

Consider the finite-horizon ruin probability that the negative net asset of
the insurer rises above the level $\lambda x$ by time $T$. That is, the event
$\{\max_{t\in\lbrack0,T]}S_{\lambda}\left(  t\right)  /\lambda\geq x\}$. We
wish to solve for the most likely path that leads to this event and therefore,
applying our theory, we must solve the following convex calculus of variations
problem.%
\[%
\begin{array}
[c]{ll}%
\text{min} & \int_{0}^{T}\int_{t}^{\infty}\left(  -\frac{\partial^{2}%
}{\partial t\partial y}\bar{q}(t,y)\right)  \left(  \log\left(  \frac
{-\frac{\partial^{2}}{\partial t\partial y}\bar{q}(t,y)}{f(y-t)}\right)
-1\right)  dydt+T\\
\text{subject to} & \max_{0\leq u\leq T}\int_{0}^{u}\left(  \int_{t}^{u}%
h_{1}(t,y)\left(  -\frac{\partial^{2}}{\partial t\partial y}\bar
{q}(t,y)\right)  dy+\int_{u}^{\infty}h_{2}(t,u)\left(  -\frac{\partial^{2}%
}{\partial t\partial y}\bar{q}(t,y)\right)  dy\right)  dt\geq
x\label{easy min problem insurance}%
\end{array}
\]
Following the recipe of Example 1, we first consider
\[%
\begin{array}
[c]{ll}%
\text{min} & \int_{0}^{u}\int_{t}^{\infty}\left(  -\frac{\partial^{2}%
}{\partial t\partial y}\bar{q}(t,y)\right)  \left(  \log\left(  \frac
{-\frac{\partial^{2}}{\partial t\partial y}\bar{q}(t,y)}{f(y-t)}\right)
-1\right)  dydt+u\\
\text{subject to} & \int_{0}^{u}\left(  \int_{t}^{u}h_{1}(t,y)\left(
-\frac{\partial^{2}}{\partial t\partial y}\bar{q}(t,y)\right)  dy+\int
_{u}^{\infty}h_{2}(t,u)\left(  -\frac{\partial^{2}}{\partial t\partial y}%
\bar{q}(t,y)\right)  dy\right)  dt\geq
x\label{easy min problem insurance copy(1)}%
\end{array}
\]
Introducing the Lagrange multiplier $\mu\geq0$, we get
\[
-\frac{\partial^{2}}{\partial t\partial y}\bar{q}(t,y)=\left\{
\begin{array}
[c]{ll}%
f(y-t)e^{\mu h_{1}(t,y)} & \text{\ for\ }t\leq y\leq u\\
f(y-t)e^{\mu h_{2}(t,u)} & \text{\ for\ }y>u
\end{array}
\right.
\]
When $x$ is large, complementary slackness forces $\mu$ to satisfy
\begin{equation}
\int_{0}^{u}\left(  \int_{t}^{u}f(y-t)e^{\mu h_{1}(t,y)}h_{1}(t,y)dy+\bar
{F}(u-t)e^{\mu h_{2}(t,u)}h_{2}(t,u)\right)  dt=x
\label{complementary slackness}%
\end{equation}
for some $\mu>0$. Denote the integration on the left hand side by $G(\mu)$,
then
\[
G^{\prime}(\mu)=\int_{0}^{u}\left(  \int_{t}^{u}f(y-t)e^{\mu h_{1}(t,y)}%
h_{1}^{2}(t,y)dy+\bar{F}(u-t)e^{\mu h_{2}(t,u)}h_{2}^{2}(t,u)\right)  dt>0.
\]
Therefore, for given $u$, $G(\mu)$ is monotone in $\mu$. Besides, $|G^{\prime
}(\mu)|\rightarrow\infty$ as $\mu\rightarrow\infty$. As a direct consequence,
for any $x$ large enough, equation (\ref{complementary slackness}) can be
easily fit to many standard numerical solvers, and it admits a unique
solution. Given $\mu$, the optimal sample path is given by
\begin{align*}
\bar{q}(t,y)  &  =\int_{0}^{t}\int_{y}^{\infty}\left(  -\frac{\partial^{2}%
}{\partial t\partial y}\bar{q}(s,w)\right)  dwds\\
&  =\int_{0}^{t}\left(  \int_{y\wedge u}^{u}f(w-s)e^{\mu(u)h_{1}(s,w)}%
dw+\int_{u\vee y}^{\infty}f(w-s)e^{\mu(u)h_{2}(s,u)}dw\right)  ds
\end{align*}
for $y\geq t$, and hence
\begin{align*}
q(t,y)  &  =\int_{0}^{t}\int_{y+t}^{\infty}\left(  -\frac{\partial^{2}%
}{\partial t\partial y}\bar{q}(s,w)\right)  dwds\\
&  =\int_{0}^{t}\left(  \int_{(y+t)\wedge u}^{u}f(w-s)e^{\mu(u)h_{1}%
(s,w)}dw+\int_{u\vee(y+t)}^{\infty}f(w-s)e^{\mu(u)h_{2}(s,u)}dw\right)  ds
\end{align*}
for $y\geq0$. Note that here we highlight the dependence of $\mu$ on $u$.
Moreover, the rate function for the fixed-time probability is
\begin{equation}
\int_{0}^{u}\left(  \int_{t}^{u}f(y-t)e^{\mu(u)h_{1}(t,y)}(\mu(u)h_{1}%
(t,y)-1)dy+\int_{u}^{\infty}f(y-t)e^{\mu(u)h_{2}(t,u)}(\mu(u)h_{2}%
(t,u)-1)dy\right)  dt \label{rate insurance}%
\end{equation}
The optimal time horizon $u$ over $0\leq u\leq T$ is chosen to minimize \eqref{rate insurance}.

Now we consider a whole life insurance contract with benefit $b=1.5$,
continuous premium $p=1$, zero interest rate and time-until-death which
follows the uniform distribution on $[0,1]$. Our goal is to compute the
optimal sample path for ruin(x=10) before time $T=1$. We solve the constraint
equation \eqref{complementary slackness} in Matlab and obtain the optimal
$u=1$ with $\mu=2.251$.

\begin{figure}[th]
\centering
\subfloat[][$Q(t,y)$ for the most likely path to overflow (surface)]{
		\includegraphics[scale=0.3]{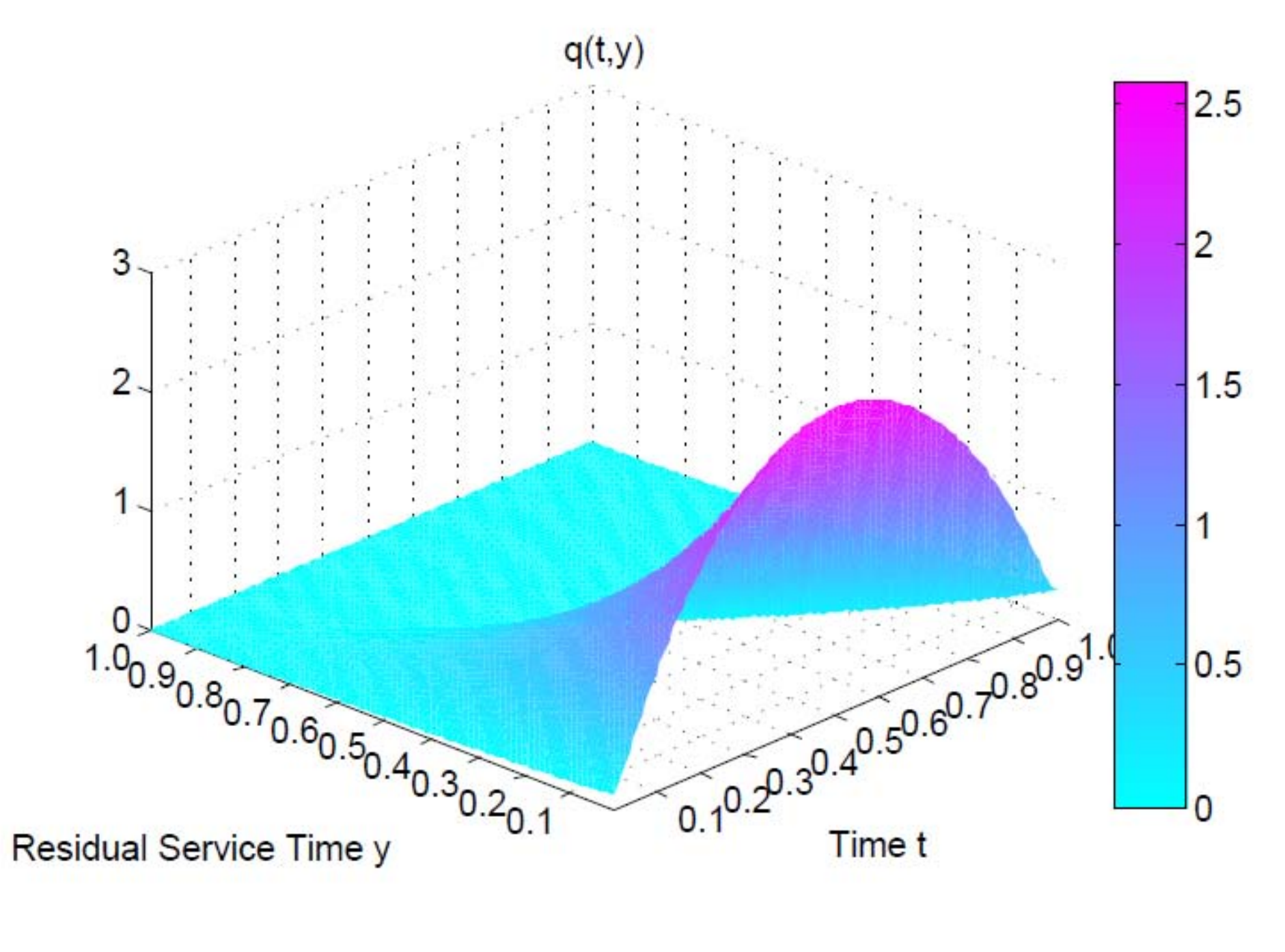}
		\label{Insurance1}
} \subfloat[][$Q(t,y)$ for the most likely path to overflow (contour)]{
\includegraphics[scale=0.3]{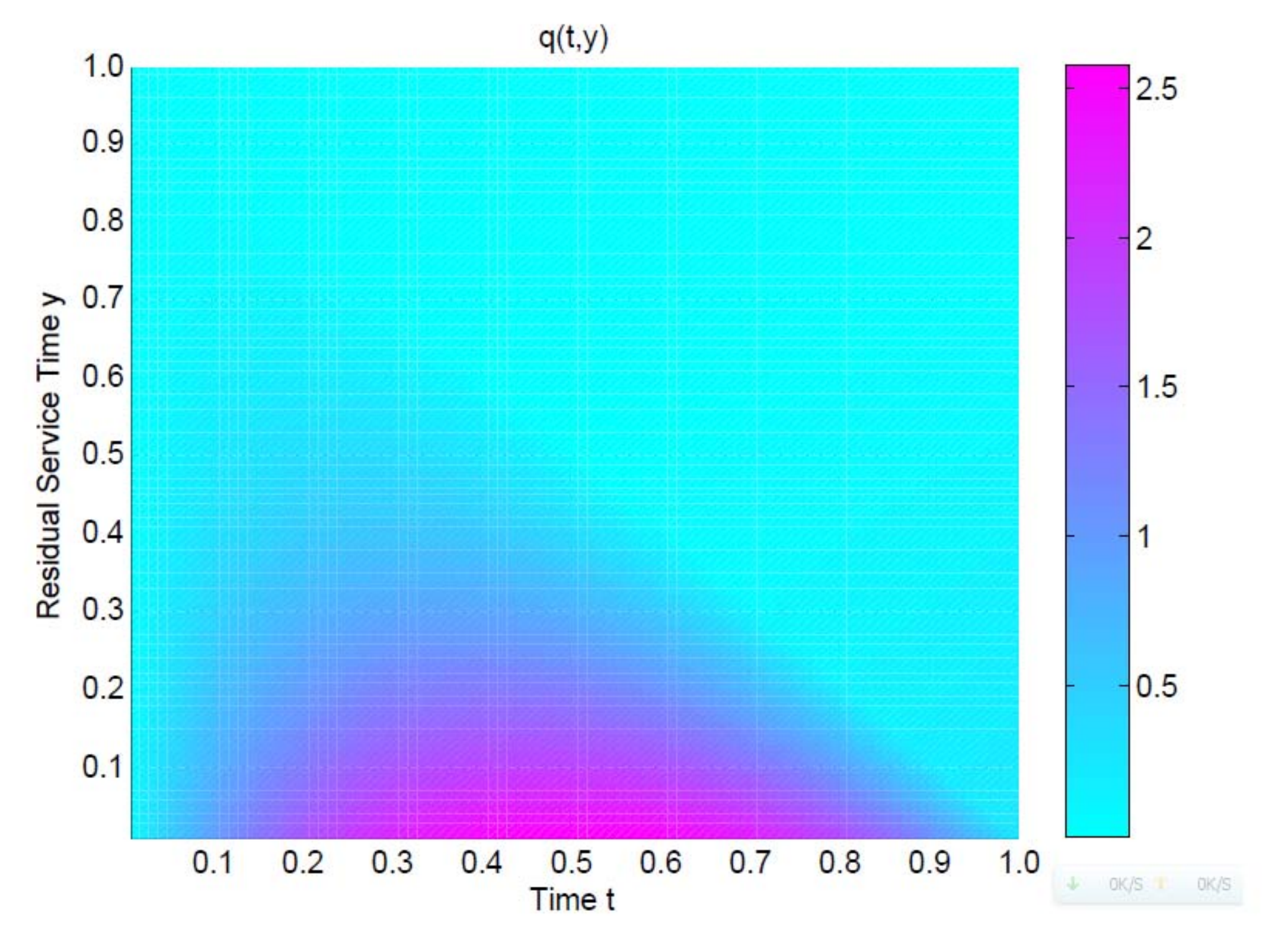}
		\label{Insurance2}
}\caption{The surface and the corresponding contour plot of the asymptotic
most likely path to ruin in a portfolio of life insurance policies.}%
\label{FigureInsurance}%
\end{figure}

In this case, we can compute the optimal path
\[
q(t,y)=\frac{1}{\mu^{2}}(e^{\mu b-\mu y}-e^{\mu b-\mu y-\mu t}-e^{\mu
b-\mu+\mu t}+e^{\mu b-\mu})+\frac{t}{\mu}e^{-\mu+\mu t}-\frac{1}{\mu^{2}%
}(e^{-\mu+\mu t}-e^{-\mu}),\text{ for }y+t\leq1,
\]
and
\[
q(t,y)=e^{-\mu(2-t-y)}\left(  \frac{1-y}{\mu}e^{\mu-\mu y}-\frac{1}{\mu^{2}%
}(e^{\mu-\mu y}-1)\right)  ,\text{ for }y+t>1.
\]
These optimal paths to ruin are shown in Figure \ref{FigureInsurance}. We just
show the conditional paths, as the unconditional path are identical to the
figures illustrated in Example 1. The optimal path here is qualitatively very
different from that of Example 1. The value of $Q(t,y)$ is the largest midway
between time 0 and 1. Intuitively, it is because it requires the smallest
``energy", or distortion from the law of large numbers, at such time point in
contributing to a large cash outflow from the insurer.

\bibliographystyle{plain}
\bibliography{prob}

\end{document}